\DeclareMathOperator{\ID}{ID}
\DeclareMathOperator{\MC}{MC}
\DeclareMathOperator{\RMC}{RMC}
\DeclareMathOperator{\PSL}{PSL}
\DeclareMathOperator{\GL}{GL}
\DeclareMathOperator{\PGL}{PGL}
\DeclareMathOperator{\Id}{Id}
\newcommand{\ZZ}{\ensuremath{\mathbb{Z}}}
\newcommand{\RR}{\ensuremath{\mathbb{R}}}
\newcommand{\CC}{\ensuremath{\mathbb{C}}}
\newcommand{\Proj}{\ensuremath{\mathbb{P}}}
\newcommand{\uhp}{\ensuremath{\mathbb{H}}} 
\newcommand{\Ap}{\ensuremath{\mathbb{A}}}
\newcommand{\Apfull}{\ensuremath{\Ap_{\text{full}}}}
\newcommand{\BQFquad}{\ensuremath{\mathbb{B}}}
\newcommand{\fdom}{\ensuremath{U_{\PGL}}}
\newcommand{\lm}[1]{\ensuremath{\left(\begin{matrix} #1 \end{matrix}\right)}}
\newcommand{\sm}[1]{\ensuremath{\left(\begin{smallmatrix} #1 \end{smallmatrix}\right)}}
\newcommand{\gensm}{\sm{a & b \\ c & d}}
\newcommand{\genlm}{\lm{a & b \\ c & d}}
\newcommand{\spa}[1]{\ensuremath{\left\langle #1\right\rangle}}
\newcommand{\bol}[1]{\ensuremath{\boldsymbol{#1}}}
\newtheorem{theorem}{Theorem}[subsection]
\newtheorem{corollary}[theorem]{Corollary}
\newtheorem{lemma}[theorem]{Lemma}
\newtheorem{proposition}[theorem]{Proposition}
\theoremstyle{definition}
\newtheorem{definition}[theorem]{Definition}
\newtheorem{remark}[theorem]{Remark}
\numberwithin{equation}{subsection}
\begin{document}

\title{The Apollonian staircase}
\author[J. Rickards]{James Rickards}
\address{University of Colorado Boulder, Boulder, Colorado, USA}
\email{james.rickards@colorado.edu}
\urladdr{https://math.colorado.edu/~jari2770/}
\date{\today}
\thanks{I thank Elena Fuchs and Katherine E. Stange for many illuminating discussions, and the anonymous referee for their useful feedback. This work was partially supported by NSF-CAREER CNS-1652238 (PI Katherine E. Stange).}
\subjclass[2020]{Primary 52C26; Secondary 20H10}
\keywords{Apollonian circle packing, Descartes quadruple, tangent circles.}
\begin{abstract}
A circle of curvature $n\in\mathbb{Z}^+$ is a part of finitely many primitive integral Apollonian circle packings. Each such packing has a circle of minimal curvature $-c\leq 0$, and we study the distribution of $c/n$ across all primitive integral packings containing a circle of curvature $n$. As $n\rightarrow\infty$, the distribution is shown to tend towards a picture we name the Apollonian staircase. A consequence of the staircase is that if we choose a random circle packing containing a circle $C$ of curvature $n$, then the probability that $C$ is tangent to the outermost circle tends towards $3/\pi$. These results are found by using positive semidefinite quadratic forms to make $\mathbb{P}^1(\mathbb{C})$ a parameter space for (not necessarily integral) circle packings. Finally, we examine an aspect of the integral theory known as spikes. When $n$ is prime, the distribution of $c/n$ is extremely smooth, whereas when $n$ is composite, there are certain spikes that correspond to prime divisors of $n$ that are at most $\sqrt{n}$.
\end{abstract}
\maketitle

\setcounter{tocdepth}{1}
\tableofcontents

\section{Introduction}

A Descartes configuration is a set of four mutually tangent circles in the plane with disjoint interiors. We may add to this picture by choosing three of the circles, and drawing the other circle that is also mutually tangent to all three. By repeating this process, we get an Apollonian circle packing. If the four initial curvatures were all integral, then every curvature in the packing is integral, and we call this an integral Apollonian circle packing. See Figure \ref{fig:ACPintro} for an example of an integral packing, where the circles are labeled by curvature. Renewed interest in integral packings came with the work of Graham, Lagarias, Mallows, Wilks, and Yan in \cite{GLMWY02}, where many fundamental properties were documented.

\begin{figure}[t]
	\includegraphics{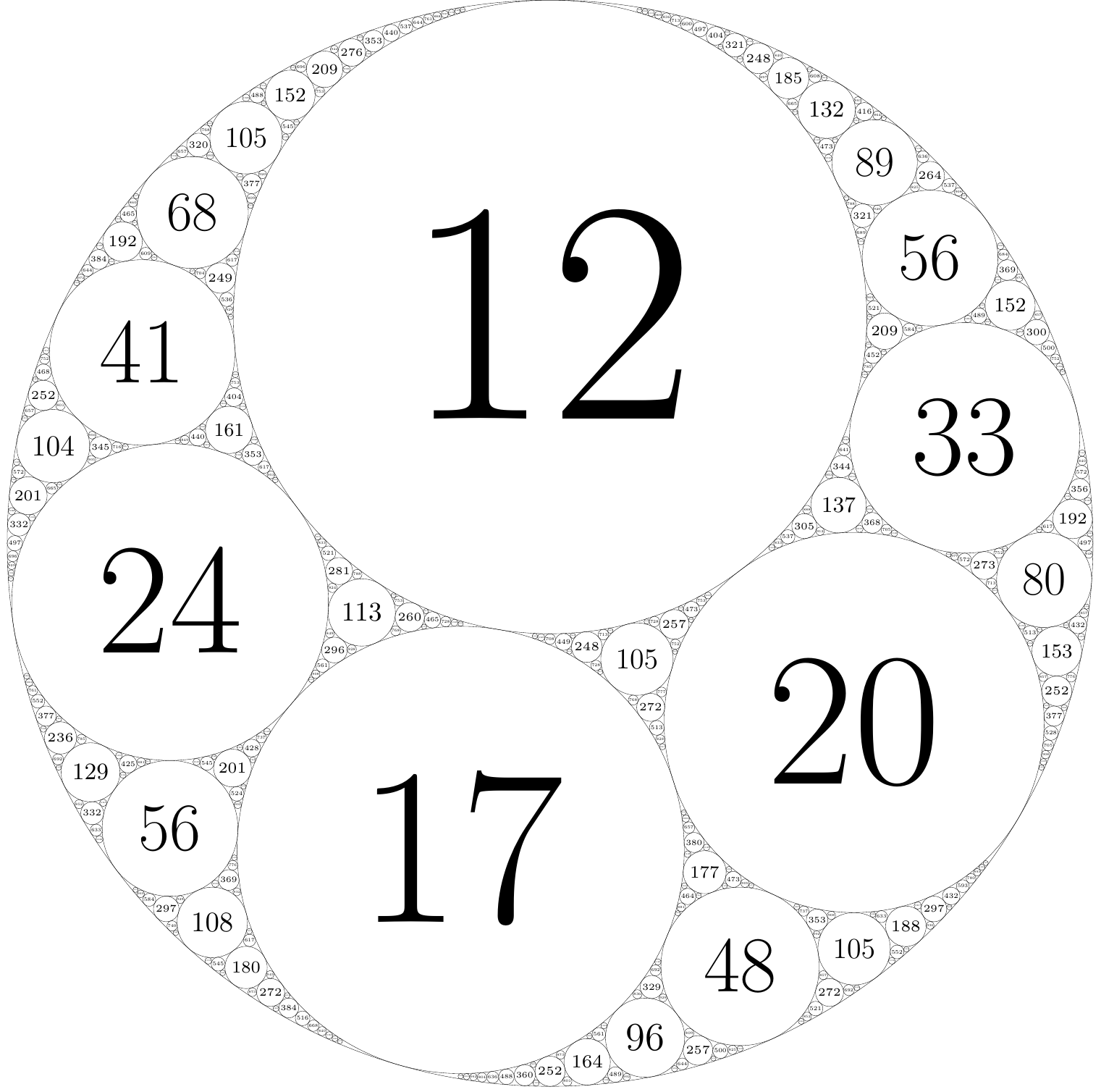}
	\caption{Apollonian circle packing corresponding to $(-7, 12, 17, 20)$.}\label{fig:ACPintro}
\end{figure}

Much of the recent work on Apollonian circle packings has centred around the asymptotic behaviour of the curvatures in an integral packing. One goal is to prove that all sufficiently large curvatures must appear in any given packing, up to congruence restrictions modulo 24. See \cite{BK14} and \cite{FSZ19} for partial results towards this conjecture. In this paper, we go in the other direction: start with a circle packing containing a circle of a given curvature, and consider how deep in the packing this circle lies. 

A related study was undertaken in the papers of Kocik (\cite{JK20}), and Holly (\cite{Holly21}). Both papers use $[0,1]^2$ as a parameter space for Apollonian circle packings (in slightly different ways), and show that the depths of circles in these packings creates an interesting fractal. In the paper of Holly, it is also shown that the location of the parameters in $[0,1]^2$ determines the nature of the corresponding packing, i.e. full plane, strip, half plane, or bounded. See their papers and Remark \ref{rem:connection} for more detail.

Another related paper is the work of Chaubey, Fuchs, Hines, and Stange in \cite{CFHS19}, where they find a continued fraction expansion for complex numbers using a Super-Apollonian packing. The idea is to walk through the circle packing via a sequence of tangency points, which is closely related to the idea of a depth element and depth circle, as studied in Section \ref{sec:quaddepth}.

A bounded packing has a unique circle of minimal (necessarily negative) curvature, which encloses all other circles. Similarly, half-plane and strip packings contain one and two (respectively) circles of curvature zero, and none of negative curvature. All integral packings are either bounded or strip.

\begin{definition}
Let $\bol{q}=(a, b, c, d)$ be a Descartes quadruple, i.e. four curvatures that correspond to a Descartes configuration, where a negative curvature indicates that the interior of the circle contains the point at infinity. If $\bol{q}$ does not generate a full plane packing, define $\MC(\bol{q})$ to be the \textit{negative} of the minimal curvature in the corresponding Apollonian circle packing. Otherwise, define $\MC(\bol{q})$ to be $0$.
\end{definition}

To study the asymptotic behaviour of $\MC$, fix a positive integer $n$, and consider the integral Descartes quadruples that contain $n$. Up to a reasonable definition of equivalence (Definition \ref{def:nquadequiv}), there are finitely many such quadruples, which are collected in the set $\ID(n)$ (``ID'' being ``integral Descartes'').

\begin{definition}
Define
\[\MC(n):=\{\MC(\bol{q}):\bol{q}\in\ID(n)\}\]
to be the multiset of negatives of minimal curvatures of quadruples containing $n$. Furthermore, define
\[\RMC(n):=\MC(n)/n=\{d/n:d\in\MC(n)\}\]
to be the ratios of curvatures in $\MC(n)$ to $n$ (also known as the ``heights'' of elements of $\ID(n)$).
\end{definition}

Since $\RMC(n)$ is contained in $[0, 1]$ and $|\ID(n)|\rightarrow\infty$ as $n\rightarrow\infty$, we can study the limiting distribution. It appears to converge to a distribution we call the ``Apollonian staircase''; see Figure \ref{fig:RMC1} for $\RMC(33920039)$ (all data in this paper was computed using PARI/GP \cite{PARI}).

\begin{figure}[bh]
	\includegraphics{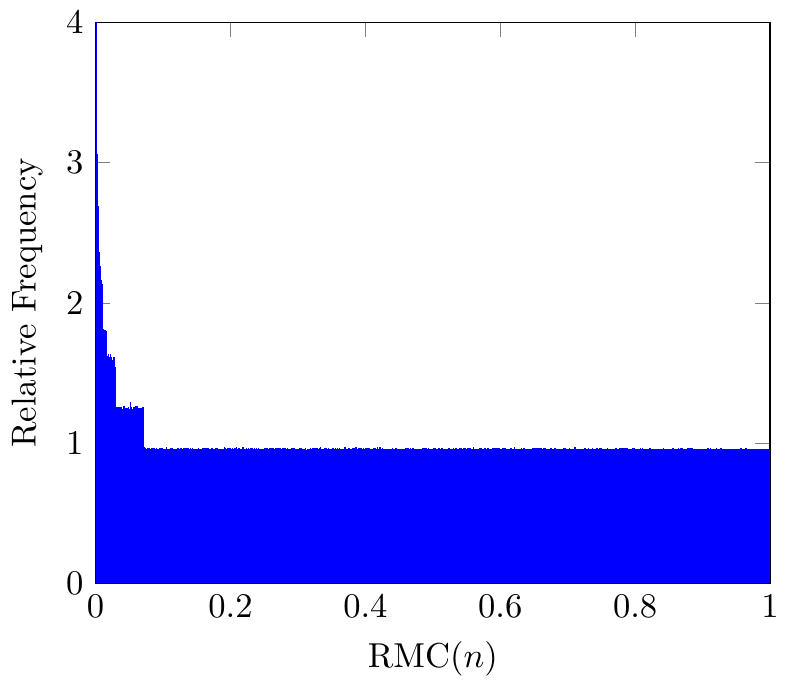}
	\caption{Histogram for $n=33920039$; $8480011$ data points in $2000$ bins.}\label{fig:RMC1}
\end{figure}

In particular, this appears to be piecewise uniform, with increasingly frequent jump discontinuities occurring near $0$. The different ``stairs'' correspond to different ``depths'' of the given circle in the corresponding circle packing. In Section \ref{sec:staircase} we precisely describe the Apollonian staircase, and prove the following theorem.

\begin{theorem}\label{thm:getstairs}
As $n\rightarrow\infty$, the distribution $\RMC(n)$ tends to the Apollonian staircase.
\end{theorem}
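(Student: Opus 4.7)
The approach is to transfer the counting problem for $\ID(n)$ to an equidistribution problem on $\Proj^1(\CC)$. As summarized in the abstract, the forthcoming sections set up a parametrization of Descartes configurations by positive semidefinite quadratic forms, with $\Proj^1(\CC)$ playing the role of parameter space. Fixing the circle of curvature $n$, the Descartes quadruples extending it correspond to points of $\Proj^1(\CC)$ modulo the subgroup of Apollonian symmetries fixing the chosen circle, producing a fundamental domain $\fdom$. The ratio $\bol{q}\mapsto\MC(\bol{q})/n$ then descends to a function $h\colon\fdom\to[0,1]$; because any single circle packing contains only finitely many circles of bounded depth, $\fdom$ ought to decompose into countably many regions on which $h$ has a simple analytic form, one region per possible ``depth circle'' above the fixed circle of curvature $n$. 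Pushing the natural measure $\mu$ on $\fdom$ forward by $h$ should then yield the Apollonian staircase, with the stairs accumulating near $0$ coming from regions corresponding to circles of increasing depth.

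With this framework in place, Theorem~\ref{thm:getstairs} reduces to the equidistribution statement that, viewing $\ID(n)$ as a finite subset of $\fdom$, the (normalized) counting measure on $\ID(n)$ converges weakly to $\mu/\mu(\fdom)$ as $n\to\infty$. Granted this, a standard Portmanteau argument on continuity sets of $h_{*}\mu$ immediately gives weak convergence of $\RMC(n)$ to $h_{*}\mu$. A separate, essentially computational, step is to evaluate $h_{*}\mu$ region by region and verify that it matches the explicit description of the Apollonian staircase given in Section~\ref{sec:staircase}; this last step should also extract the $3/\pi$ constant advertised in the abstract as the measure of the ``tangent to outermost circle'' region of $\fdom$.

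The main obstacle is the equidistribution step. Elements of $\ID(n)$ correspond, through the quadratic-form parametrization, to primitive integral representations of $n$ by the Descartes form, grouped into orbits under the Apollonian stabilizer of the fixed circle, and their images in $\fdom$ form an arithmetic orbit. The natural attack is a direct lattice-point count inside subregions of $\fdom$: use the explicit parametrization to rewrite ``parameter lies in this subregion'' as a congruence-and-inequality condition on integer vectors, then apply geometry-of-numbers bounds with a power-saving error term. A subtlety is that the abstract foreshadows ``spikes'' when $n$ is composite, so the limit is really a weak limit; the proof must show that for each fixed continuity set the anomalous contributions shrink with $n$, even though the discrete distribution itself has bumps at finite $n$. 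If the direct count turns out to be delicate near the boundary of $\fdom$ (where $h$ develops its accumulating discontinuities), a fallback strategy is to invoke a Duke-type equidistribution theorem for CM-like orbits on an arithmetic quotient of $\Proj^1(\CC)$, and handle the cusp regions by a separate truncation argument.
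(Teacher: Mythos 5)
Your overall architecture is the same as the paper's: parametrize $n$-quadruple classes by points of the fundamental domain $\fdom$ for $\PGL(2,\ZZ)$ via the principal roots of the associated quadratic forms, decompose $\fdom$ into depth regions on which the height $H(\bol{q})=\MC(\bol{q})/n$ is analyzed, and combine this with equidistribution of the points coming from $\ID(n)$. However, your proposed \emph{primary} method for the equidistribution step --- a direct lattice-point count with geometry-of-numbers bounds and a power-saving error term --- would fail, and not merely because of delicacy near the boundary of $\fdom$. The set $S_n$ consists of the roots of the $h^{\pm}(-4n^2)$ classes of primitive forms of discriminant $-4n^2$, i.e.\ Heegner-type points, and their equidistribution in $\fdom$ is precisely Duke's theorem. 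Proving it requires nontrivial bounds on Weyl sums against Maass forms and Eisenstein series (ultimately subconvexity-type input on Fourier coefficients of half-integral weight forms); no elementary counting argument is known, and this was a famous open problem before 1988 (Linnik's ergodic method only handles it under a congruence condition at a fixed auxiliary prime). What you list as a ``fallback'' is in fact the only available route, and it is exactly what the paper invokes.

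A second, smaller understatement: you assert that $h$ ``has a simple analytic form'' on each depth region and that pushing forward $\mu$ gives the staircase. In the paper this is the content of Theorem~\ref{thm:depthprob}, and it is not automatic. One must show that $H$ is \emph{uniformly} distributed on $[0,\,t-\sqrt{t^2-1}]$ with respect to hyperbolic measure on $D_W\cap\fdom$; for depth circles contained in the interior of $\fdom$ this follows from an explicit computation of the hyperbolic area of the sublevel sets (the $\epsilon$-circles of Proposition~\ref{prop:heightuniform}), but for the boundary regions $W_k$ that are cut by $\partial\fdom$ one needs the additional M\"obius-symmetry arguments of Lemmas~\ref{lem:Wk2} and~\ref{lem:Wk1} showing that the truncation removes a fixed fraction of every $\epsilon$-circle. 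Your Portmanteau reduction from equidistribution to convergence of $\RMC(n)$ on continuity sets is fine, and your remark that spikes do not affect the weak limit agrees with the paper.
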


In order to prove this result, we give a direct connection between Descartes quadruples and $\PGL(2, \ZZ)$ equivalence classes of positive semidefinite binary quadratic forms, which was also considered in Theorem 4.2 of \cite{GLMWY02}. By considering where the principal root (Definition \ref{def:principalroot}) of the quadratic form lies in relation to the strip packing (embedded in $\CC$), we can describe the precise relationship between $\bol{q}$ and $\MC(\bol{q})$. An application of Duke's equidistribution theorem (\cite{Duke88}) allows us to specialize to primitive integral quadruples, and prove Theorem \ref{thm:getstairs}.

Another related phenomenon is the concept of ``spikes'' in the distribution, which is fully investigated in Section \ref{sec:spikes}. For example, take $n=42728555$, whose distribution is found in Figure \ref{fig:RMC2}.

\begin{figure}[hb]
	\includegraphics{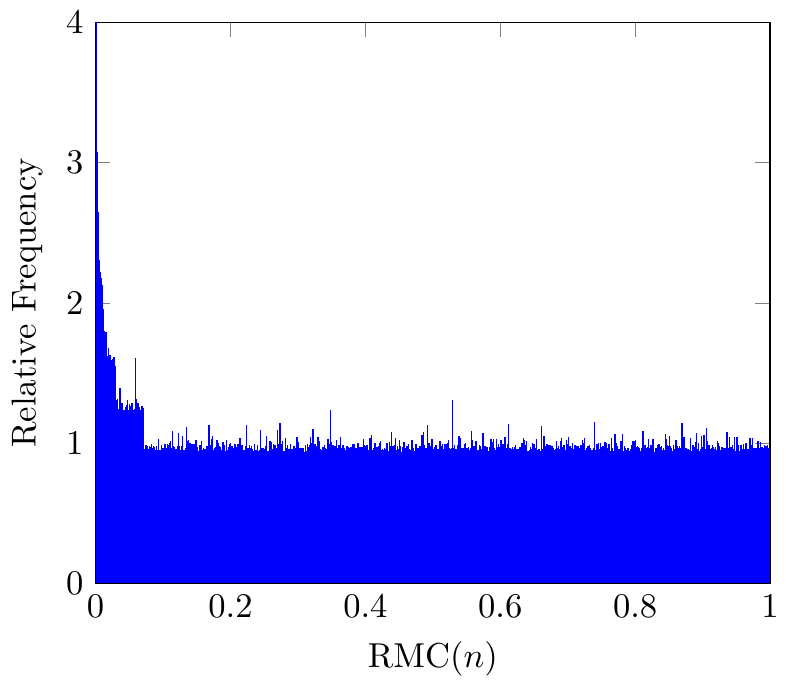}
	\caption{Histogram for $n=42728555$; $8480008$ data points in $2000$ bins.}\label{fig:RMC2}
\end{figure}

This is a lot rougher than Figure \ref{fig:RMC1}, despite similar amounts of data and bin sizes. The appearance of spikes is roughly described in the next theorem.

\begin{theorem}
Spikes appear in the histogram for $\RMC(n)$ for each prime $p\mid n$ with $p\leq\sqrt{n}$. Primes close to $\sqrt{n}$ give rise to a small number of tall spikes, whereas primes close to $1$ give rise to a large number of short spikes.
\end{theorem}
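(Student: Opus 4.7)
The plan is to isolate within $\ID(n)$ a natural sub-collection of quadruples that fails to equidistribute under the Duke-type argument behind Theorem~\ref{thm:getstairs}, and to identify this sub-collection as the source of the spikes sitting on top of the staircase background. The inputs are the quadratic-form correspondence used to prove Theorem~\ref{thm:getstairs} and an arithmetic study of quadruples whose curvatures share a prime divisor of $n$.

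Fix a prime $p\mid n$ and consider the sub-collection $\ID(n)_p\subset\ID(n)$ of quadruples $(n,b,c,d)$ with $p\mid b$. Reducing the Descartes relation $(a+b+c+d)^2=2(a^2+b^2+c^2+d^2)$ modulo $p$ kills the $a,b$ contributions and gives $(c-d)^2\equiv 0\pmod p$, hence $c\equiv d\pmod p$. Under the quadratic-form correspondence, this forces the associated positive semidefinite form to be of a special congruence shape modulo $p$, and forces its principal root to lie on a visibly thinner lattice inside the strip packing. Consequently, the heights $\MC(\bol{q})/n$ of quadruples in $\ID(n)_p$ are concentrated on a discrete set of values rather than equidistributing over $[0,1]$, and each such value is a candidate spike location.

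The next step is a counting estimate. Each admissible $b=pk$ essentially determines a spike height via the Descartes quadratic in $c,d$; the range of admissible $k$ (bounded in terms of $n/p$) gives the number of spike locations attached to $p$, while a class-number-type estimate on forms of discriminant roughly $n^2/p$ should yield $|\ID(n)_p|\asymp|\ID(n)|/p$. Combining these two asymptotics, a prime $p$ contributes roughly $\asymp n/p$ spike locations, each carrying a mass proportional to $1/p$ of $|\ID(n)|$; so primes close to $\sqrt{n}$ produce few spikes but each collects a substantial proportion of the total (tall spikes), while small primes produce many spike locations each carrying only a small excess over the staircase (short spikes). The cut-off at $\sqrt{n}$ appears naturally because for $p>\sqrt n$ the single constraint $p\mid b$ leaves too few admissible $b$ for a spike to form at all.

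The main obstacle, as I see it, is making the notion of ``spike'' quantitative (the theorem is stated qualitatively, so one has to commit to a definition such as ``a bin whose mass exceeds the staircase density by a definite factor''), and handling the combinatorial interactions when $n$ has several small prime factors whose spike supports overlap. A secondary difficulty is obtaining an effective rate in Duke's theorem strong enough that the spikes can be separated cleanly from the equidistributed background at the histogram bin scale.
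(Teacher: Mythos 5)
There is a genuine gap: your proposal misses the paper's key lemma and, as a result, gets the spike locations and counts wrong. In the paper, the spikes live on the bottom stair: the multiplicity of the value $c/n$ in $\RMC_0(n)$ is the tangency number $T(n,-c)$, and Lemmas \ref{lem:Tc1c2basic} and \ref{lem:Tc1c2further} show that $T(n,-c)$ equals (up to an error of at most $1$) $\tfrac12\prod_{p^e\mid\mid (n-c)}s_p$, where $s_p$ counts solutions of $x^2\equiv-n^2\pmod{p^e}$ subject to a gcd condition. Lemma \ref{lem:spvalues} then shows $s_p\leq 2$ generically, and $s_p$ is abnormally large (of size roughly $p^{\min(e/2,f)}$ with $p^f\mid\mid n$) precisely when $p\mid n$ \emph{and} $p^2\mid n-c$. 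So the spikes for a prime $p\mid n$ sit at the roughly $n/p^2$ values $c\equiv n\pmod{p^2}$, each taller than the background by a factor $\asymp p$; the cutoff $p\leq\sqrt n$ is forced because $0<n-c\leq n$ must be divisible by $p^2$. Your condition ``$p\mid b$ for some other curvature $b$'' (and the derived congruence $c\equiv d\pmod p$, which is correctly computed) is not the relevant divisibility: what matters is $p^2$ dividing the \emph{sum} of $n$ and the outer curvature. Your resulting count of $\asymp n/p$ spike locations is off by a factor of $p$ and is in fact inconsistent with the statement you are proving: for $p$ near $\sqrt n$ it predicts about $\sqrt n$ spikes, not ``a small number of tall spikes.''

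Two further points. First, framing the spikes as a failure of Duke-type equidistribution is misleading: the paper explicitly notes that Theorem \ref{thm:getstairs} (with fixed bin size) is unaffected by spikes, and no effective rate in Duke's theorem is needed; the spikes are fluctuations of the arithmetic multiplicity function $c\mapsto T(n,-c)$ over individual rationals $c/n$, a divisor-sum--type phenomenon visible only at bin scales comparable to $p^2/n$. Second, the ``class-number-type estimate $|\ID(n)_p|\asymp|\ID(n)|/p$'' and the claim that the heights in your sub-collection ``concentrate on a discrete set'' are asserted without justification and would not, even if true, identify which values of $c/n$ receive excess mass; the local computation of $s_p$ is the content that actually does this work and cannot be bypassed.
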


See Section \ref{sec:spikes} for a more precise description of how spikes occur. In terms of Figure \ref{fig:RMC2}, $n$ factorizes as
\[42728555=5\cdot 101\cdot 211\cdot 401,\]
all of which are primes at most $\sqrt{n}$, giving a wide variety of spikes. Note that the appearance of spikes does not affect Theorem \ref{thm:getstairs}, since that theorem concerns bins of fixed length as $n\rightarrow\infty$. The effect of the spikes is washed away as the cumulative frequency of each bin goes to infinity.

Finally, Theorem \ref{thm:getstairs} has an interesting numerical corollary. A circle is tangent to the outer circle in its corresponding packing if and only if it contributes to the bottom stair of the staircase. Using the description of the staircase, we can compute the probability that this situation occurs.

\begin{corollary}\label{cor:probtangent}
Pick a quadruple $\bol{q}$ uniformly at random from $\ID(n)$. Then as $n\rightarrow\infty$, the probability that the circle of curvature $n$ in $\bol{q}$ is tangent to the outermost circle in its corresponding Apollonian circle packing tends to $\frac{3}{\pi}$.
\end{corollary}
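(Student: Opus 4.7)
The strategy is to apply Theorem \ref{thm:getstairs} and identify the tangency event with membership in the bottom stair of the Apollonian staircase, then compute that stair's measure directly from its description in Section \ref{sec:staircase}.

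First, I would show that, for $\bol{q}\in\ID(n)$, the circle of curvature $n$ is tangent to the outermost circle of its packing exactly when the height $\MC(\bol{q})/n$ lies in a specific sub-interval $I\subseteq[0,1]$, which coincides with the base of the bottom stair. Using the parametrisation of Descartes quadruples by $\PGL(2,\ZZ)$-equivalence classes of positive semidefinite binary quadratic forms---the same parametrisation used to prove Theorem \ref{thm:getstairs}---the location of the principal root inside the strip packing simultaneously encodes the height $\MC(\bol{q})/n$ and the depth of the $n$-circle in its packing. Tangency to the outer circle corresponds to the principal root lying in the innermost admissible subregion (the ``first layer'' of the strip), and the image of that region under the height map is precisely $I$.

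With this identification, Theorem \ref{thm:getstairs} immediately gives
\[\lim_{n\to\infty}\frac{|\{\bol{q}\in\ID(n):\text{$n$-circle tangent to outer}\}|}{|\ID(n)|}=\mu(I),\]
where $\mu$ is the Apollonian staircase, regarded as a probability measure on $[0,1]$. It then remains to evaluate $\mu(I)$. From the explicit description of $\mu$ in Section \ref{sec:staircase}---coming from a natural measure on a $\PGL(2,\ZZ)$-fundamental domain pushed forward through the height map---one computes that the mass of the bottom stair is $3/\pi$. The denominator of $\pi$ arises as the reciprocal of the hyperbolic area $\pi/3$ of the modular fundamental domain after normalising $\mu$ to a probability measure, and the numerator $3$ reflects that the preimage of $I$ is a single fundamental-domain-sized region with no further combinatorial overcounting.

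The main obstacle I anticipate is the constant tracking in the final step: the normalisation of $\mu$ has to be reconciled with Duke's equidistribution theorem (invoked in the proof of Theorem \ref{thm:getstairs}) and with the $\PGL$ versus $\PSL$ and orientation issues inherent in the quadratic-form-to-quadruple correspondence, so that the integral lands exactly on $3/\pi$ rather than on a rational multiple of it. The geometric identification of $I$ with the tangency region, while requiring the principal-root picture, should be comparatively straightforward once the quadratic form framework is in place.
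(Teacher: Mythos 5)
Your high-level identification---tangency to the outermost circle corresponds to the bottom stair, i.e.\ to the depth element $\Id_2$---is the same as the paper's, but there is a genuine gap in how you propose to extract the probability. You assert that tangency is equivalent to the height $\MC(\bol{q})/n$ lying in a sub-interval $I\subseteq[0,1]$ and then invoke Theorem \ref{thm:getstairs} (the limiting \emph{marginal} distribution of heights) to get the answer as the staircase mass of $I$. This does not work: the bottom stair has $t=1$, hence base $[0,1]$, and it overlaps every other stair, whose bases are $[0,t-\sqrt{t^2-1}]\subseteq[0,\,7-\sqrt{48}]$. A quadruple of small height may or may not be tangent to the outer circle, so the tangency event is not determined by the height, and no interval $I$ has the property you need. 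Consequently the limiting height distribution alone cannot isolate the mass of one individual stair; what is required is the finer statement of Theorem \ref{thm:integraldepthprob} (via Theorem \ref{thm:depthprob} and Duke's theorem), namely that the probability of a given depth element $W$ tends to $d_W$. Tangency to the outer circle is exactly the event that the depth element is some $\Id_j$; for $n>0$ only $\Id_2$ meets the fundamental domain in positive measure, and $d_{\Id_2}=3/\pi$.

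Your heuristic for the constant is also off. The value is not ``the reciprocal of the area $\pi/3$ of the modular fundamental domain times one fundamental domain's worth of mass'': the paper works with the $\PGL(2,\ZZ)$ domain $\fdom$, of hyperbolic area $\pi/6$, and (Lemma \ref{lem:Wk0}) computes $d_{\Id_2}$ as the ratio of the hyperbolic area of $D_{\Id_2}\cap\fdom=\{x+iy:\,-\tfrac12\le x\le 0,\ y\ge 1\}$, which equals $\tfrac12$, to $\mu(\fdom)=\tfrac{\pi}{6}$; this gives $(1/2)/(\pi/6)=3/\pi$. So the numerator $3$ comes from $\tfrac12\cdot 6$, not from an absence of overcounting. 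To repair your argument, replace the height-interval identification by the depth-element identification and cite the $\Id_2$ entry of Theorem \ref{thm:depthprob} directly.
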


\begin{remark}
Curiously, the fraction $\frac{3}{\pi}$ also appears in the work of Athreya, Cobeli, and Zaharescu in \cite{ACZ15}. In their paper, they fix a circle $C$ in an Apollonian circle packing, and consider $\epsilon-$neighbourhoods of the exterior of $C$. It is shown that the proportion of points in the neighbourhood that lie in a circle tangent to $C$ tends to $\frac{3}{\pi}$ as $\epsilon\rightarrow 0$. Both questions deal with probabilities of circles being tangent in Apollonian circle packings, but the parameter spaces are quite different. It is not obvious if the appearance of $\frac{3}{\pi}$ in each place is an accident, or there is a deeper relation between the questions.
\end{remark}

Sections \ref{sec:apolgroup} and \ref{sec:psdbqf} precisely define the map $\bol{q}\rightarrow p_{\bol{q}}$, taking a Descartes quadruple $\bol{q}$ to a corresponding binary quadratic form, and finally to its principal root $p_{\bol{q}}\in\Proj^1(\CC)$. In Section \ref{sec:quaddepth}, the location of $p_{\bol{q}}$ with respect to an embedding of the strip packing is shown to determine the depth of $\bol{q}$. Section \ref{sec:quadheight} studies the heights of quadruples having $p_{\bol{q}}$ lying in a given part of the strip packing. In Section \ref{sec:fdomdist} we restrict $p_{\bol{q}}$ to be in the fundamental domain for $\PGL(2, \ZZ)$, give probabilities for the different depths of $\bol{q}$, and examine the distribution of heights. Finally, Section \ref{sec:spikes} considers integral Descartes quadruples, where we precisely describe the Apollonian staircase, and finish proving the main results of the introduction.

\section{The Apollonian group}\label{sec:apolgroup}

Given an (ordered) Descartes configuration, a ``move'' consists of replacing one of the four circles by the other circle that is tangent to the remaining three. There are four possible moves, denoted $S_1, S_2, S_3, S_4$, where $S_i$ corresponds to replacing the $i$\textsuperscript{th} circle.

\begin{definition}
Let $\Ap$ be the group generated by the $S_i$, called the Apollonian group. A reduced word in $\Ap$ is any sequence of the $S_i$ which does not contain the same element in consecutive positions.
\end{definition}

An element of $\Ap$ replaces a given Descartes configuration by another configuration in the corresponding Apollonian packing. If the ordering of the circles is ignored, this will generate all Descartes configurations in the packing.

Algebraically, assume we start with the Descartes quadruple $\bol{q}=(a, b, c, d)$, which satisfies the Descartes equation
\begin{equation}\label{eqn:descartes}
(a+b+c+d)^2=2(a^2+b^2+c^2+d^2).
\end{equation}
Vieta's formulas imply that the move $S_1$ replaces $a$ with $2(b+c+d)-a$. The group elements $S_i$ can be represented as $4\times 4$ matrices, acting on the column vectors $(a, b, c, d)^T$. For example,
\[S_1=\lm{-1 & 2 & 2 & 2\\0 & 1 & 0 & 0\\0 & 0 & 1 & 0\\0 & 0 & 0 & 1}\quad\text{and}\quad S_2=\lm{1 & 0 & 0 & 0\\2 & -1 & 2 & 2\\0 & 0 & 1 & 0\\0 & 0 & 0 & 1}.\]
This turns $\Ap$ into a subgroup of $\GL(4, \ZZ)$. Furthermore, it is a subgroup of the orthogonal group corresponding to the quadratic form 
\[Q_D:=\lm{1 & -1 & -1 & -1\\-1 & 1 & -1 & -1\\-1 & -1 & 1 & -1\\-1 & -1 & -1 & 1},\]
i.e. $W^TQ_DW=Q_D$ for all $W\in\Ap$. Each element of $\Ap$ can be written uniquely as a reduced word in $S_1, S_2, S_3, S_4$.

Since we are considering Descartes configurations/quadruples as being ordered, the orbit of a single configuration under $\Ap$ does not necessarily hit every configuration in the packing. To this end, if $\sigma$ is a permutation of $(1, 2, 3, 4)$, denote by $P_{\sigma}\in\GL(4, \ZZ)$ the corresponding action on a Descartes quadruple.

\begin{definition}
Define $\Apfull$ to be the group generated by the $S_i$ and the $P_{\sigma}$, which is still a subgroup of the orthogonal group corresponding to $Q_D$. Distinct orbits of $\Apfull$ correspond to distinct Apollonian circle packings.
\end{definition}

In order to talk about a specific circle in a packing, we take the first circle in a quadruple to be ``distinguished''.

\begin{definition}\label{def:nquadequiv}
Let $\Ap_1$ be the subgroup of $\Apfull$ generated by $P_{(23)}, P_{(24)}, S_4$. An $n-$quadruple refers to a Descartes quadruple of the form $(n, a, b, c)$. Two $n-$quadruples are declared equivalent if they are in the same $\Ap_1-$orbit.
\end{definition}

Note that any element of $\Ap_1$ can be written uniquely as $P_{\sigma}W$, where $\sigma$ is a permutation of $(1, 2, 3, 4)$ fixing $1$, and $W$ is a reduced word in $S_2$, $S_3$, $S_4$. In particular, quadruples in an $n-$quadruple class always start with the curvature $n$.

In most cases, an $n-$quadruple class will correspond to a unique circle in the geometric picture. However, in the strip packing, there are infinitely many circles that give rise to the same class. Similarly, in a packing coming from $(a, a, b, c)$, the two circles of curvature $a$ correspond to the same $a-$quadruple class. By working with $\Ap_1-$equivalence classes, we resolve the technical issues that arise from this.

Given a Descartes quadruple $\bol{q}$ corresponding to a bounded or half-plane packing, there is a unique reduced word $W\in\Ap$ such that $W\bol{q}$ contains a non-positive curvature. If $\bol{q}$ corresponds to the strip packing, there are two minimal words $W, W'$, one for each of the two curvature zero circles.

\begin{definition}
Define the depth of $\bol{q}$, $\delta(\bol{q})$, to be the length of $W$ if $\bol{q}$ is the bounded or half-plane packing, and the multiset of lengths of $W, W'$ for the strip packing. If $\bol{q}$ corresponds to a full plane packing, define $\delta(\bol{q})=\infty$. We say that $\bol{q}$ has depth $d$ if $d\in\delta(\bol{q})$. In particular, strip packing quadruples have one or two possible depths, and all other quadruples have a unique depth.
\end{definition}

The depth of a quadruple is a basic measure for how far away it is from containing the largest circle in a packing.

\begin{remark}\label{rem:connection}
This is essentially the same depth as defined by Kocik in \cite{JK20}. In this paper, he maps an Apollonian quadruple $(a, b, c, d)$ to $\left(\frac{a}{c}, \frac{b}{c}\right)\in [0,1]^2$, where it is assumed that $c=\max(a, b, c)$. Quadruples of a fixed depth correspond to unions of ellipses in $[0, 1]^2$, and this creates an interesting fractal. The analogous fractal is explored by Holly in \cite{Holly21}, where she maps $(a, b, c, d)$ to $\left(\frac{b}{c}, \frac{a}{b}\right)\in [0, 1]^2$, assuming that $a\leq b\leq c$. Points inside an ellipse correspond to bounded packings, the boundary of the ellipses minus tangency points are half-plane packings, tangency points of ellipses are strip packings, and any point not inside or on an ellipse gives a full-plane packing.
\end{remark}

By connecting Descartes quadruples to positive semidefinite quadratic forms, we generate a picture in $\Proj^1(\CC)$, which is analogous to the fractal from Holly and Kocik.

\section{Positive semidefinite binary quadratic forms}\label{sec:psdbqf}

\begin{definition}
Let $A, B, C\in\RR$ be not all zero, and consider the function $Q(x, y)=Ax^2+Bxy+Cy^2$, called a binary quadratic form. It can be written as $Q=[A, B, C]$, and has discriminant $D=B^2-4AC$. The form is definite (resp. semidefinite) if $D<0$ (resp. $D\leq 0$), and called positive if it only takes on nonnegative values for $x, y\in\RR$. Alternatively, a definite/semidefinite form is positive if and only if $A,C\geq 0$. Abbreviate positive definite binary quadratic form as PDBQF, and positive semidefinite as PSDBQF. For the rest of this paper, we will only be considering P(S)DBQFs.
\end{definition}

A real number $N$ is represented by $Q$ if there exist integers $x, y$ such that $Q(x, y)=N$. If there exist coprime integers $x, y$ with $Q(x, y)=N$, then we say $N$ is properly represented by $Q$.

The (right) action of $\PGL(2, \ZZ)$ on PSDBQF's is via
\[\gamma Q(x, y):=Q(ax+by, cx+dy),\text{ where }\gamma=\genlm.\]
This action preserves the discriminant, and divides the set of PSDBQF's into equivalence classes.

The classical theory of $\PGL(2, \ZZ)$ reduction of integral PDBQFs also applies to general PDBQFs. In particular, each equivalence class has a unique reduced representative, as defined in Definition \ref{def:reduced}.

\begin{definition}\label{def:reduced}
A PDBQF $[A, B, C]$ is called ($\PGL(2, \ZZ)-$)reduced if $0\leq B\leq A\leq C$.
\end{definition}

\subsection{Descartes quadruples and quadratic forms}

\begin{definition}
A \textit{BQF quadruple} is any quadruple $[n, A, B, C]\in\RR^4$ for which $[A, B, C]$ is a PSDBQF of discriminant $-4n^2$. It is called \textit{primitive integral} if $n,A,B,C\in\ZZ$ have no common factor. The action of $\gamma\in\PGL(2, \ZZ)$ on BQF quadruples is via
\[\gamma[n, A, B, C]:=[n, A', B', C'],\]
where $\gamma[A, B, C]=[A', B', C']$.
\end{definition}

The set of all BQF quadruples is thus given by 
\begin{equation}\label{eq:bdef}
\BQFquad:=\{[n, A, B, C]\neq \bol{0}:\quad A\geq 0,\quad C\geq 0,\quad 4n^2+B^2-4AC=0\}.
\end{equation}
We use square brackets and capital letters to distinguish BQF quadruples from Descartes quadruples. Theorem 4.2 of \cite{GLMWY02} furnishes the bijection between Descartes and BQF quadruples, and is recorded next (with updated notation).

\begin{proposition}\label{prop:quadqfbijection}
Let $n$ be a fixed real number. Then $n-$quadruples $(n, a, b, c)$ biject with BQF quadruples $[n, A, B, C]$ via the correspondence
\begin{align*}
\phi(n, a, b, c):= & [n, n+a, n+a+b-c, n+b],\\
\theta[n, A, B, C]:= & (n, A-n, C-n, A+C-B-n).
\end{align*}
Furthermore, primitive integral Descartes quadruples biject with primitive integral BQF quadruples.
\end{proposition}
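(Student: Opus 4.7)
The plan is to verify that $\phi$ and $\theta$ are well-defined mutually inverse maps, and then track primitivity. Observe that both maps fix the first coordinate $n$, and on the remaining three coordinates they are affine-linear with integer coefficients. Written as matrices acting on $(a,b,c)^T$ and $(A,B,C)^T$ respectively, both transformations have determinant $\pm 1$. A direct substitution then shows $\phi\circ\theta$ and $\theta\circ\phi$ are the identity on $\RR^4$, and the same determinant computation shows that when restricted to integer coordinates the GCD of $(n,a,b,c)$ equals the GCD of $(n,A,B,C)$ — handling the primitive integral claim at the end.

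The heart of the proof is the discriminant identity. Setting $A=n+a$, $B=n+a+b-c$, $C=n+b$, a direct expansion yields
\[B^2-4AC \;=\; a^2+b^2+c^2 - 2(na+nb+nc+ab+ac+bc) - 3n^2.\]
Substituting the Descartes equation $(n+a+b+c)^2=2(n^2+a^2+b^2+c^2)$, which rearranges to $2(na+nb+nc+ab+ac+bc)=n^2+a^2+b^2+c^2$, collapses the right-hand side to $-4n^2$. The computation is reversible: given $4n^2+B^2-4AC=0$, the same identity forces $(n,A-n,C-n,A+C-B-n)$ to satisfy the Descartes equation, so $\theta$ lands in the set of Descartes quadruples at the algebraic level.

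To close the bijection one still needs to match the geometric/positivity constraints on each side. For $\phi$, one checks that in any Descartes configuration two mutually tangent circles satisfy (first curvature) $+$ (second curvature) $\geq 0$: a brief case analysis — both positive (external tangency), one negative (the negative-curvature circle is the outer one and contains the other, so its magnitude is smaller), or a line — gives $A,C\geq 0$. For $\theta$, once the Descartes equation is in hand, I would invoke the classical converse of Descartes' Circle Theorem: any four reals satisfying the Descartes equation (with at most one non-positive entry of the appropriate size) arise as the curvatures of four mutually tangent circles, so the quadruple is geometrically realized.

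The main obstacle is not the discriminant identity, which is a one-line symbolic calculation, but the sign bookkeeping: pairing the PSD condition $A,C\geq 0$ with the geometric constraint that a Descartes configuration has at most one negative curvature, in both directions. The primitive integral statement then falls out essentially for free, because $\phi$ is a $\ZZ$-linear bijection of determinant $\pm 1$ on the last three coordinates with $n$ unchanged, so any common divisor of $(n,a,b,c)$ is also a common divisor of $(n,A,B,C)$ and vice versa.
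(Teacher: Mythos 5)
The paper does not prove this proposition at all: it is recorded verbatim from Theorem 4.2 of \cite{GLMWY02}, so your direct verification is a genuinely different (and more self-contained) route. Your algebra is correct throughout: $\theta\circ\phi$ and $\phi\circ\theta$ are the identity, the expansion $B^2-4AC=a^2+b^2+c^2-2(na+nb+nc+ab+ac+bc)-3n^2$ combined with the rearranged Descartes equation does give $B^2-4AC=-4n^2$ (and the substitution is reversible), the forward positivity argument via ``sum of curvatures of two tangent circles is $\geq 0$'' is sound, and primitivity is preserved because $\phi$ is a $\ZZ$-linear map of $\RR^4$ with $\ZZ$-linear inverse.

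The one genuine soft spot is the backward direction. You say you would ``invoke the classical converse of Descartes' Circle Theorem'' for quadruples ``with at most one non-positive entry of the appropriate size,'' but you never verify that the hypotheses of that converse follow from $A\geq 0$, $C\geq 0$, and $B^2-4AC=-4n^2$. This is not automatic from the Descartes equation alone --- for instance $(-1,-1,0,0)$ satisfies it --- so the check matters. It can be done: the three pairwise sums involving $n$ are $A$, $C$, and $A+C-B\geq A+C-2\sqrt{AC}\geq 0$, so if $n<0$ all other entries are positive; if $n\geq 0$ and two of the remaining entries were negative, one derives a contradiction (e.g.\ $A<n$ and $C<n$ force $B^2=4AC-4n^2<0$, and the other cases similarly fail using $|B|\leq 2\sqrt{AC-n^2}$). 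Since $4AC=B^2+4n^2$ also forces the outer circle, when present, to be large enough, the converse theorem then applies. Supplying this verification (or, as the paper does, simply citing \cite{GLMWY02}) would close the argument; as written, the surjectivity of $\theta$ onto geometric Descartes quadruples is asserted rather than proved.
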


Turning our focus to an individual circle makes the correspondence even stronger.

\begin{proposition}\label{prop:ap1orbit}
Let $n$ be a real number, and let $\bol{q}$ be an $n-$quadruple. Then the image of the $\Ap_1-$orbit of $\bol{q}$ under the map $\phi$ is the $\PGL(2, \ZZ)$ orbit of $\phi(\bol{q})$.
\end{proposition}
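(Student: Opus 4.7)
The plan is to turn this into a direct generator-by-generator computation. Since $\Ap_1$ is generated by $P_{(23)}, P_{(24)}, S_4$, I will compute the action of each generator on an $n$-quadruple, push it through $\phi$, and identify a matrix in $\PGL(2,\ZZ)$ that produces the same transformation on the associated BQF quadruple. This immediately gives the inclusion $\phi(\Ap_1\cdot\bol{q})\subseteq \PGL(2,\ZZ)\cdot\phi(\bol{q})$. For the reverse inclusion, I will verify that the three matrices obtained generate all of $\PGL(2,\ZZ)$, which will let me pull back any $\PGL(2,\ZZ)$-action to an $\Ap_1$-action on the Descartes side.

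For the generator-by-generator step, let me spell out what to check. With $\phi(n,a,b,c)=[n,n+a,n+a+b-c,n+b]=[n,A,B,C]$ and using $a=A-n$, $b=C-n$, $c=A+C-B-n$, I would:
\begin{itemize}
\item[(i)] Apply $S_4$, which sends $c\mapsto 2(n+a+b)-c$, and observe that $\phi$ then returns $[n,A,-B,C]$; this matches the action of $\sm{-1 & 0\\0 & 1}$.
\item[(ii)] Apply $P_{(23)}$, which swaps $b$ and $c$, and check that $\phi$ produces $[n,A,2A-B,A-B+C]$; this matches the action of $\sm{1 & 1\\0 & -1}$.
\item[(iii)] Apply $P_{(24)}$, which swaps $a$ and $c$, and check that $\phi$ produces $[n,A+C-B,2C-B,C]$; this matches the action of $\sm{-1 & 0\\1 & 1}$.
\end{itemize}
Each identification is a short matching of coefficients of $x^2,xy,y^2$ in $\gamma Q(x,y)$, so this portion is mechanical but needs to be carried out cleanly.

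For the surjectivity onto $\PGL(2,\ZZ)$, I will express standard generators of $\PGL(2,\ZZ)$ in terms of the three matrices above. Using the well-known generating set $\{T,L^{-1},R\}$ with $T=\sm{1 & 1\\0 & 1}$, $L^{-1}=\sm{1 & 0\\-1 & 1}$, $R=\sm{-1 & 0\\0 & 1}$ (from which $S=TL^{-1}T$ in $\PGL(2,\ZZ)$), I note that $R$ is the image of $S_4$, while the products $P_{(23)}\cdot S_4$ and $P_{(24)}\cdot S_4$ yield $T^{-1}$ and $L^{-1}$ up to $\pm\Id$. This covers $\PGL(2,\ZZ)$.

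Finally, I would observe that all three generators of $\Ap_1$ preserve the first coordinate $n$ (so they act on $n$-quadruples), and that the bijection of Proposition~\ref{prop:quadqfbijection} is fixed-$n$; this confirms that $\phi$ intertwines the $\Ap_1$-action on $n$-quadruples with the $\PGL(2,\ZZ)$-action on BQF quadruples over the same $n$, giving equality of orbits. The main obstacle is not conceptual but bookkeeping: keeping the right/left action convention for $\PGL(2,\ZZ)$ consistent and correctly inverting the substitution $a,b,c\leftrightarrow A,B,C$ when reading off the matrices. Everything else is routine, since the result reduces to three explicit $2\times 2$ calculations and a standard check that these generate $\PGL(2,\ZZ)$.
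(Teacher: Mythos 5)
Your approach is the mirror image of the paper's: the paper computes $\theta$ of the action of the $\PGL(2,\ZZ)$ generators $S,T,U$ and observes that the resulting elements $S_4P_{(23)}$, $P_{(34)}S_4$, $S_4$ generate $\Ap_1$, whereas you compute $\phi$ of the action of the $\Ap_1$ generators and observe that the resulting matrices generate $\PGL(2,\ZZ)$. Either direction works, the intertwining-of-generators idea is the same, and your generation check (obtaining $R$, $T^{-1}$, and $L^{-1}$ up to sign) is fine; the left/right action bookkeeping you worry about only turns the correspondence into an anti-homomorphism, which does not affect orbits or generation.

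There is, however, one concrete slip in step (ii). In the quadruple $(n,a,b,c)$ the entries occupy positions $1$ through $4$, so $P_{(23)}$ swaps $a$ and $b$, not $b$ and $c$; swapping $b$ and $c$ is $P_{(34)}$ (your own reading of $P_{(24)}$ as swapping $a$ and $c$ confirms the $1$-indexing). The correct image of $P_{(23)}$ is $\phi(n,b,a,c)=[n,C,B,A]$, which matches $\sm{0 & 1\\1 & 0}$, while the quadruple $[n,A,2A-B,A-B+C]$ you wrote down (and the matrix $\sm{1 & 1\\0 & -1}$) is the image of $P_{(34)}$. Your argument still goes through, because $\{P_{(34)},P_{(24)},S_4\}$ also generates $\Ap_1$ (two transpositions of $\{2,3,4\}$ sharing a point generate the full symmetric group on $\{2,3,4\}$, hence $P_{(23)}$ is recovered), but as written the claimed formula for $P_{(23)}$ is false. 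Either relabel (ii) as $P_{(34)}$ and note explicitly that you are using this alternate generating set of $\Ap_1$, or redo the computation for the genuine $P_{(23)}$.
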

\begin{proof}
Let 
\[S=\lm{0 & 1\\-1 & 0}, \quad T=\lm{1 & 1\\0 & 1}, \quad U=\lm{1 & 0\\0 & -1},\]
which generate $\PGL(2, \ZZ)$. Let $\bol{Q}=[n, A, B, C]$ be a BQF quadruple, and a computation shows that
\begin{align*}
\theta(S\bol{Q}) = & S_4P_{(23)}\theta(\bol{Q});\\
\theta(T\bol{Q}) = & P_{(34)}S_4\theta(\bol{Q});\\
\theta(U\bol{Q}) = & S_4\theta(\bol{Q}).
\end{align*}
Thus the image of the $\PGL(2, \ZZ)$ orbit of $\bol{Q}$ corresponds to the orbit of $\theta(\bol{Q})$ under
\[\spa{S_4P_{(23)}, P_{(34)}S_4, S_4}=\Ap_1.\]
The result follows.
\end{proof}

A consequence of this result is that any circle touching the circle of curvature $n$ has a curvature that is properly represented $Q'-n$, where $\phi(\bol{q})=[n, Q']$. This property was first observed by Sarnak in \cite{Sar07}, and has been crucial in the aforementioned partial results towards the local-global conjecture for integral packings (\cite{BK14} and \cite{FSZ19}).

\begin{definition}
Let the matrix $S_{\theta}$ be defined by
\[S_{\theta}:=\lm{1 & 0 & 0 & 0\\-1 & 1 & 0 & 0\\-1 & 0 & 0 & 1\\-1 & 1 & -1 & 1},\]
so that
\[\theta[n, A, B, C]=\left(S_{\theta}[n, A, B, C]^T\right)^T.\]
\end{definition}

When using BQF quadruples as the parameter space, the action of the Apollonian group is via $S_{\theta}^{-1}\Ap S_{\theta}$. However, we need to consider curvatures, so we don't want to map back to BQF quadruples at the end. This amounts to working with the coset $\Ap S_{\theta}$ instead. Indeed, left multiplication of a BQF quadruple $\bol{Q}$ by $WS_{\theta}\in\Ap S_{\theta}$ corresponds to $W\theta(\bol{Q})$, i.e. the action of $W$ on the corresponding Descartes quadruple.

\begin{lemma}\label{lem:Apthetaqf}
Let $W_{\theta}\in\Ap S_{\theta}$. Then $W_{\theta}Q_{\theta}W_{\theta}^T=Q_D$, where
\[Q_{\theta}:=\lm{1 & 0 & 0 & 0\\0 & 0 & 0 & -2\\0 & 0 & 4 & 0\\0 & -2 & 0 & 0}.\]
\end{lemma}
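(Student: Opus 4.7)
The plan is to reduce the claim to two elementary checks: a single matrix identity involving $S_\theta$, and a symmetry property of $\Ap$ acting on $Q_D$.

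First I would verify directly that $S_\theta Q_\theta S_\theta^T = Q_D$. This is a routine multiplication: $Q_\theta$ has only four nonzero entries, so $Q_\theta S_\theta^T$ is quick, and the subsequent left multiplication by $S_\theta$ is equally short. Once this identity is in hand, any $W_\theta \in \Ap S_\theta$ may be written as $W_\theta = W S_\theta$ with $W\in\Ap$, and
\[
W_\theta Q_\theta W_\theta^T \;=\; W\bigl(S_\theta Q_\theta S_\theta^T\bigr) W^T \;=\; W Q_D W^T,
\]
so the proof reduces to showing $W Q_D W^T = Q_D$ for every $W\in\Ap$.

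The excerpt already records the standard orthogonality relation $W^T Q_D W = Q_D$, which is the transpose of what is needed; bridging the two requires one observation. A row-by-row check shows that $Q_D$ has mutually orthogonal rows of squared norm $4$, so $Q_D^2 = 4I$, equivalently $Q_D^{-1} = \tfrac{1}{4}Q_D$. Inverting $W^T Q_D W = Q_D$ yields $W^{-1}Q_D^{-1}W^{-T} = Q_D^{-1}$, and since $\Ap$ is a group I may replace $W$ by $W^{-1}$ to obtain $W Q_D^{-1} W^T = Q_D^{-1}$; multiplying through by $4$ and applying $Q_D = 4Q_D^{-1}$ gives $W Q_D W^T = Q_D$, as desired. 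The whole argument is mechanical once $Q_D^2 = 4I$ is noticed, so I expect no real obstacle beyond keeping transposes consistent. If one prefers to avoid the $Q_D^{-1}=Q_D/4$ shortcut, an alternative is to verify $S_i Q_D S_i^T = Q_D$ on each of the four involutive generators directly and then invoke closure under products, since $(W_1 W_2)Q_D(W_1 W_2)^T = W_1\bigl(W_2 Q_D W_2^T\bigr) W_1^T$.
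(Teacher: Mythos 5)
Your proposal is correct and follows the paper's proof almost exactly: verify $S_\theta Q_\theta S_\theta^T = Q_D$ by direct computation, write $W_\theta = WS_\theta$, and reduce to the invariance $WQ_DW^T = Q_D$ for $W\in\Ap$. The only divergence is in that last step, where the paper simply checks $S_iQ_DS_i^T=Q_D$ on the four generators and invokes closure (your stated alternative), whereas your primary route derives it from the already-recorded relation $W^TQ_DW=Q_D$ via the observation $Q_D^2=4I$; both are valid, and your bridge is a slightly slicker way to avoid redoing the generator computations.
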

\begin{proof}
Write $W_{\theta}=WS_{\theta}$, where $W\in\Ap$. A computation shows that $S_{\theta}Q_{\theta}S_{\theta}^T=Q_D$, whence the result follows if $WQ_DW^T=Q_D$. This is true for $S_i$ for $1\leq i\leq 4$, hence is true for all of $\Ap$.
\end{proof}

The equation $WQ_DW^T=Q_D$ holding for all $W\in\Ap$ is equivalent to $\Ap^T$ also being a subgroup of the orthogonal group corresponding to $Q_D$. A corollary of this lemma is that the rows of $W_{\theta}$ obey a quadratic relation.

\begin{corollary}\label{cor:tuvweqn}
Let $(t, u, v, w)$ be a row of $W_{\theta}\in\Ap S_{\theta}$. Then
\[t^2+4v^2-4uw=1.\]
\end{corollary}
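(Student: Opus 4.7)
The plan is to extract the statement directly from the diagonal entries of the matrix identity $W_{\theta}Q_{\theta}W_{\theta}^{T}=Q_{D}$ provided by Lemma \ref{lem:Apthetaqf}. Since $Q_D$ has every diagonal entry equal to $1$, reading off the $(i,i)$-entry of the left-hand side for each $i$ gives a quadratic relation in the entries of the $i$-th row of $W_\theta$, and this relation should be exactly $t^2+4v^2-4uw=1$.

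Concretely, I would let $\mathbf{r}=(t,u,v,w)$ denote the $i$-th row of $W_\theta$, so that the $(i,i)$-entry of $W_\theta Q_\theta W_\theta^T$ is $\mathbf{r}\, Q_\theta\, \mathbf{r}^T$. I would then compute $Q_\theta \mathbf{r}^T = (t,\,-2w,\,4v,\,-2u)^T$ from the explicit form of $Q_\theta$, and pair it with $\mathbf{r}$ to obtain $\mathbf{r}\, Q_\theta\, \mathbf{r}^T = t^2 - 2uw + 4v^2 - 2uw = t^2+4v^2-4uw$. Setting this equal to the $(i,i)$-entry of $Q_D$, which is $1$, yields the desired identity.

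There is no real obstacle here; the statement is a direct corollary of Lemma \ref{lem:Apthetaqf}, and the only task is the brief matrix computation described above. The only thing worth double-checking is the sign pattern in the off-diagonal entries of $Q_\theta$ (the $-2$'s pairing the second and fourth coordinates), which is what produces the $-4uw$ term with the correct coefficient; the middle entry $4$ of $Q_\theta$ produces the $4v^2$ term and the $1$ in the upper left gives $t^2$.
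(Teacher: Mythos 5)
Your proof is correct and is essentially the paper's argument: both read off the diagonal entries of the identity $W_{\theta}Q_{\theta}W_{\theta}^{T}=Q_{D}$ from Lemma \ref{lem:Apthetaqf} as the quadratic form $Q_{\theta}$ evaluated on a row of $W_{\theta}$, equated to the diagonal entry $1$ of $Q_D$. You simply carry out explicitly the small matrix computation that the paper leaves implicit.
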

\begin{proof}
In general, if the matrices $A,B,Q$ satisfy $A^TQA=B$, then $B_{ij}=\bol{a_i}^TQ\bol{a_j}$, where $\bol{a_i}$ is the $i$\textsuperscript{th} column of $A$. From Lemma \ref{lem:Apthetaqf}, this holds with $A=W_{\theta}^T$, $Q=Q_{\theta}$, and $B=Q_D$, and the result follows by taking $i=j$.
\end{proof}

\subsection{Principal root of a quadratic form}

We have transferred Descartes quadruples to BQF quadruples, and we now map the picture into $\Proj^1(\CC)$ by taking a root of the quadratic form.

\begin{definition}\label{def:principalroot}
Let $\bol{Q}=[n, A, B, C]$ be a BQF quadruple. The function $AX^2+BX+C$ has two roots (with multiplicity) in $\Proj^1(\CC)$; we designate one root as principal via the explicit definition
\[p_{\bol{Q}}:=\begin{cases}
\dfrac{-B+2ni}{2A} & \text{if $A\neq 0$;}\\
\infty & \text{if $A=0$.}
\end{cases}\]
\end{definition}

Note that $p_{\bol{Q}}$ is the upper half plane root of the corresponding quadratic form if $n>0$, and the lower half plane root if $n<0$. If $n=0$, there is a unique root in $\Proj^1(\CC)$.

\begin{definition}
Let $\gamma\in\PGL(2, \ZZ)$. The action of $\gamma=\gensm$ on $z\in\Proj^1(\CC)$ is defined as
\[\gamma z:=\begin{cases}
\frac{az+b}{cz+d} & \text{if $\det(\gamma)=1$;}\\
\frac{a\overline{z}+b}{c\overline{z}+d} & \text{if $\det(\gamma)=-1$.}
\end{cases}\]
\end{definition}

This action is via the corresponding M\"{o}bius map if $\det(\gamma)=1$, and the M\"{o}bius map acting on $\overline{z}$ otherwise. In particular, the upper half plane is preserved by the action of $\PGL(2, \ZZ)$.

\begin{proposition}
The action of $\PGL(2, \ZZ)$ on BQF quadruples $\bol{Q}$ commutes with the inverse action on $p_{\bol{Q}}$, i.e. $p_{\gamma\bol{Q}}=\gamma^{-1}(p_{\bol{Q}})$ for all $\gamma\in\PGL(2, \ZZ)$.
\end{proposition}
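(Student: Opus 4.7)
The plan is to verify the identity on the three generators $S, T, U$ of $\PGL(2, \ZZ)$ exhibited in the proof of Proposition \ref{prop:ap1orbit}, and then propagate it to all of $\PGL(2, \ZZ)$ via the multiplicative structure of the two actions. The propagation relies on two compatibilities. First, the substitution action $\bol{Q} \mapsto \gamma \bol{Q}$, written as a left action, is anti-multiplicative: substituting $\gamma_1 \gamma_2 \sm{x \\ y}$ into $Q$ amounts to substituting $\gamma_1$ first and then $\gamma_2$, giving $(\gamma_1 \gamma_2) \bol{Q} = \gamma_2(\gamma_1 \bol{Q})$. Second, the action on $\Proj^1(\CC)$ is a genuine left action, $(\gamma_1 \gamma_2)(z) = \gamma_1(\gamma_2(z))$, which requires a short verification in the mixed-determinant case where the conjugation inside the formula, combined with the reality of matrix entries, reproduces the entries of the matrix product. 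Taking inverses then shows: if $p_{\gamma \bol{Q}} = \gamma^{-1}(p_{\bol{Q}})$ holds for $\gamma = \gamma_1, \gamma_2$, then $p_{(\gamma_1 \gamma_2) \bol{Q}} = p_{\gamma_2(\gamma_1 \bol{Q})} = \gamma_2^{-1}(\gamma_1^{-1}(p_{\bol{Q}})) = (\gamma_1 \gamma_2)^{-1}(p_{\bol{Q}})$.

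The three generator checks are short symbolic computations. For $T = \sm{1 & 1 \\ 0 & 1}$, a direct expansion yields $TQ = Ax^2 + (2A+B)xy + (A+B+C)y^2$, so $p_{T\bol{Q}} = p_{\bol{Q}} - 1 = T^{-1}(p_{\bol{Q}})$. For $S = \sm{0 & 1 \\ -1 & 0}$, one gets $SQ = Cx^2 - Bxy + Ay^2$ and hence $p_{S\bol{Q}} = (B + 2ni)/(2C)$; on the other side, $S^{-1}(p_{\bol{Q}}) = -1/p_{\bol{Q}} = 2A(B + 2ni)/(B^2 + 4n^2)$, which equals $(B + 2ni)/(2C)$ after applying the discriminant identity $B^2 + 4n^2 = 4AC$. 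For $U = \sm{1 & 0 \\ 0 & -1}$, with $\det U = -1$, one has $UQ = Ax^2 - Bxy + Cy^2$, so $p_{U\bol{Q}} = (B + 2ni)/(2A) = -\overline{p_{\bol{Q}}} = U^{-1}(p_{\bol{Q}})$, using the paper's conjugation convention for determinant $-1$ matrices.

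Finally, the degenerate case $A = 0$ needs separate attention. Positive semidefiniteness together with the discriminant relation $B^2 - 4AC = -4n^2$ forces $A > 0$ whenever $n \neq 0$, so this case arises only when $n = 0$, where necessarily $B = 0$ and $\bol{Q} = [0, 0, 0, C]$ with $p_{\bol{Q}} = \infty$; the generator formulas then extend naturally as limits in $\Proj^1(\CC)$. The main obstacle, though it is bookkeeping more than mathematics, is tracking the contravariance carefully: the identity pairs $\gamma \bol{Q}$ with $\gamma^{-1}(p_{\bol{Q}})$ rather than $\gamma(p_{\bol{Q}})$ precisely because substitution into a quadratic form pulls back its roots, and one must keep the conjugation for $\det = -1$ matrices straight so that the two actions align correctly under composition.
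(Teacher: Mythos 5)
Your proof is correct and follows essentially the same route as the paper's: verify the identity $p_{\gamma\bol{Q}}=\gamma^{-1}(p_{\bol{Q}})$ on the generators $S$, $T$, $U$ by direct computation and then extend to all of $\PGL(2,\ZZ)$. The only difference is that you spell out why the generator check suffices (anti-multiplicativity of the substitution action versus the left action on $\Proj^1(\CC)$, including the determinant $-1$ conjugation bookkeeping), a step the paper leaves implicit with ``it suffices to check this claim on the generators.''
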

\begin{proof}
It suffices to check this claim on the generators $S, T, U$ of $\PGL(2, \ZZ)$ (from Proposition \ref{prop:ap1orbit}). If $z\in\Proj^1(\CC)$, then
\[S^{-1}z=\dfrac{-1}{z},\quad T^{-1}z=z-1, \quad U^{-1}z=-\overline{z}.\]
Write $\bol{Q}=[n, A, B, C]$, and then
\[S\bol{Q}=[n, C, -B, A],\quad T\bol{Q}=[n, A, B+2A, C+B+A], \quad U\bol{Q}=[n, A, -B, C].\]
The result follows by direct computation.
\end{proof}

If $n>0$, then a reduced BQF quadruple $\bol{Q}=[n, A, B, C]$ corresponds to $p_{\bol{Q}}$ living in the fundamental region, as seen in Figure \ref{fig:gl2zdom}. Note that this is half of the classical fundamental domain for $\PSL(2, \ZZ)$, with the difference due to $\GL$: the action of $U$ folds the right half of the classical fundamental domain onto the left.

\begin{figure}[htb]
	\includegraphics{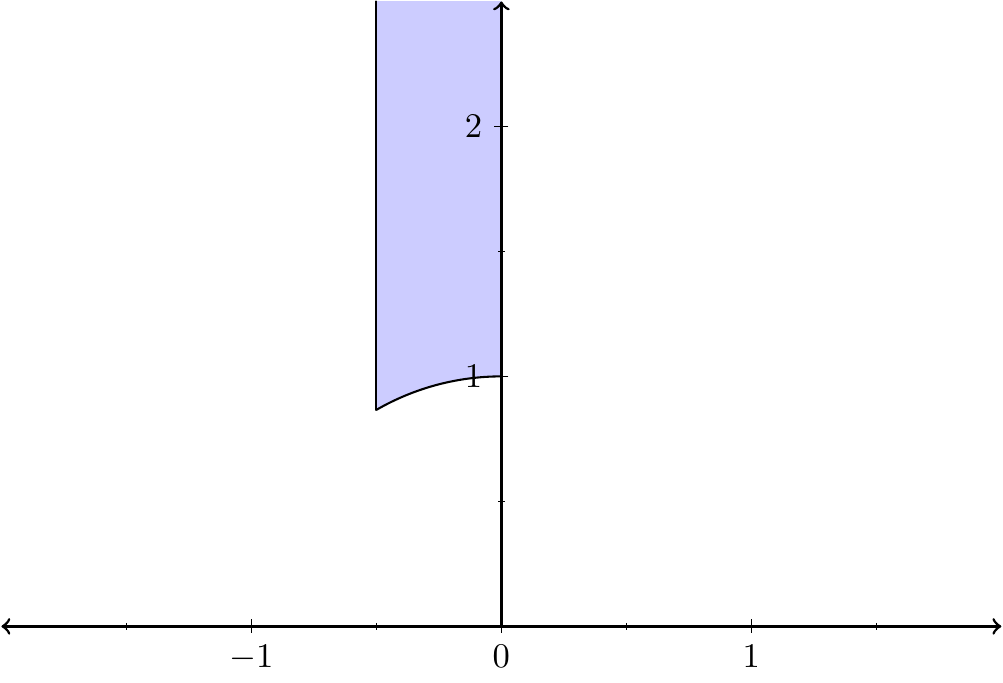}
	\caption{Fundamental domain for $\PGL(2, \ZZ)$.}\label{fig:gl2zdom}
\end{figure}

Recall the set of all BQF quadruples, $\BQFquad$, as defined in Equation \eqref{eq:bdef}.

\begin{lemma}\label{lem:qftorootbijection}
The map from $\BQFquad/\RR^+$ to $\Proj^1(\CC)$ via $\bol{Q}\rightarrow p_{\bol{Q}}$ is a bijection.
\end{lemma}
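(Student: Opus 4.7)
The plan is to verify the three standard properties of a bijection (well-definedness on the quotient, surjectivity, injectivity) directly from the explicit formula in Definition \ref{def:principalroot}, using the constraint $4n^2+B^2-4AC=0$ together with $A,C\geq 0$ to recover a unique representative (up to positive scalar) of any target point.

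First I would check well-definedness on the quotient: if $\lambda>0$ and $\bol{Q}'=\lambda\bol{Q}=[\lambda n,\lambda A,\lambda B,\lambda C]$, then in the case $A\neq 0$ the fraction $\frac{-B+2ni}{2A}$ is visibly invariant under this scaling, and in the case $A=0$ both $\bol{Q}$ and $\bol{Q}'$ map to $\infty$. Note that because $\RR^+$ is used (not $\RR^\times$), the sign constraints $A,C\geq 0$ are respected.

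For surjectivity, I would exhibit an explicit preimage of any $z\in\Proj^1(\CC)$. For $z=\infty$ take $[0,0,0,1]\in\BQFquad$. For $z=x+yi$ with $y\neq 0$, set $A=1$, $B=-2x$, $n=y$; then the discriminant relation forces $C=x^2+y^2\geq 0$, giving a valid element of $\BQFquad$. For $z=x\in\RR$, the same choice with $y=0$ works (then $n=0$ and $C=x^2$). In each case the conditions $A\geq 0$, $C\geq 0$, and $4n^2+B^2-4AC=0$ hold, and $[n,A,B,C]\neq \bol{0}$.

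For injectivity, suppose $p_{\bol{Q}_1}=p_{\bol{Q}_2}$ with $\bol{Q}_j=[n_j,A_j,B_j,C_j]$. If this common value is $\infty$, then $A_1=A_2=0$; the discriminant equation $4n_j^2+B_j^2=0$ forces $n_j=B_j=0$, and then $C_j>0$ since $\bol{Q}_j\neq\bol{0}$, so $\bol{Q}_2=(C_2/C_1)\bol{Q}_1$ with $C_2/C_1\in\RR^+$. If the common value $z=x+yi$ is finite, then $A_1,A_2>0$; equating real and imaginary parts in $\frac{-B_j+2n_ji}{2A_j}=z$ gives $B_j/A_j=-2x$ and $n_j/A_j=y$, so setting $\lambda=A_2/A_1>0$ forces $\lambda A_1=A_2$, $\lambda B_1=B_2$, $\lambda n_1=n_2$; finally the identity $C=(4n^2+B^2)/(4A)$ (valid when $A\neq 0$) forces $\lambda C_1=C_2$ as well, so $\bol{Q}_2=\lambda\bol{Q}_1$ in $\BQFquad/\RR^+$.

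The proof is essentially a bookkeeping exercise, and the only real obstacle is making sure all degenerate cases ($A=0$, $n=0$, and ``$C$ forced by the constraint'') are handled consistently with the positivity conditions that define $\BQFquad$ and with the fact that the quotient is only by $\RR^+$ rather than $\RR^\times$.
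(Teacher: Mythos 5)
Your proof is correct and follows essentially the same route as the paper's: both arguments reduce to the normalization $A=1$ (equivalently, comparing ratios $B/A$, $n/A$, $C/A$), use the discriminant relation to recover $C$ from $x^2+y^2$, and treat $A=0$ as the unique preimage of $\infty$. Your explicit check of well-definedness under $\RR^+$-scaling and of the boundary case $y=0$ is slightly more thorough than the paper's write-up but adds nothing substantively different.
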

\begin{proof}
If $A=0$, then $0\leq B^2=-4n^2\leq 0$, whence $B=n=0$. Thus
\[[n, A, B, C]=[0, 0, 0, C]\sim[0, 0, 0, 1],\]
so there is a unique element that maps to $\infty$.

Otherwise, $A\neq 0$, write $p_{\bol{Q}}=x+iy$ for unique real numbers $x, y$, and we have:
\begin{equation}\label{eqn:xyexpressions}
x=\dfrac{-B}{2A},\quad y=\dfrac{n}{A},\quad x^2+y^2=\dfrac{C}{A}.
\end{equation}
There is a unique scaling of $[n, A, B, C]$ so that $A=1$, where $x=-B/2$, $y=n$. In particular, if $[n, 1, B, C]$ and $[n', 1, B', C']$ are mapped to the same point, then $B=B'$ and $n=n'$. Since $C=x^2+y^2=C'$ as well, the map is one to one. Finally, given $x,y\in\RR$, we obtain $B=-2x$, $n=y$, $C=x^2+y^2$, which implies that the map is onto, and thus a bijection.
\end{proof}

Combining Proposition \ref{prop:quadqfbijection} with Lemma \ref{lem:qftorootbijection} gives the following lemma.

\begin{lemma}\label{lem:apolquadprojbiject}
There is a bijection between Descartes quadruples up to scaling by $\RR^+$ and $\Proj^1(\CC)$, with the association being
\[\bol{q}\rightarrow p_{\phi(\bol{q})}:=p_{\bol{q}}.\]
\end{lemma}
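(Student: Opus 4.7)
The plan is to compose the two bijections already established. The map $\phi$ of Proposition \ref{prop:quadqfbijection} gives, for each fixed real $n$, a bijection between $n$-quadruples and BQF quadruples with first entry $n$. Taking the disjoint union over all $n\in\RR$, $\phi$ becomes a bijection from all Descartes quadruples to all BQF quadruples $\BQFquad$. Lemma \ref{lem:qftorootbijection} then supplies a bijection $\BQFquad/\RR^+\rightarrow\Proj^1(\CC)$ sending $\bol{Q}$ to $p_{\bol{Q}}$. To splice these together into the stated bijection, the only thing to check is that $\phi$ is equivariant for positive scaling.

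So the first step is to observe that the defining formula
\[\phi(n, a, b, c)=[n,\,n+a,\,n+a+b-c,\,n+b]\]
is linear in $(n,a,b,c)$. Hence for any $\lambda>0$ one has $\phi(\lambda n,\lambda a,\lambda b,\lambda c)=\lambda\phi(n,a,b,c)$, and similarly $\theta$ is $\RR^+$-equivariant. Therefore $\phi$ descends to a well-defined bijection
\[\overline{\phi}:\{\text{Descartes quadruples}\}/\RR^+\;\longrightarrow\;\BQFquad/\RR^+.\]

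Next, I would compose $\overline{\phi}$ with the bijection $\bol{Q}\mapsto p_{\bol{Q}}$ from Lemma \ref{lem:qftorootbijection}. By definition $p_{\bol{q}}=p_{\phi(\bol{q})}$, so the composition is the map described in the statement of the lemma. Both maps in the composition are bijections on equivalence classes, so the composition is a bijection.

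In terms of difficulty, there is essentially no obstacle: once the $\RR^+$-equivariance of $\phi$ is noted, the statement is pure diagram chasing. The only place where one might pause is in confirming that the version of Proposition \ref{prop:quadqfbijection} stated with $n$ fixed does indeed assemble (over varying $n$) into a global bijection of all Descartes quadruples with $\BQFquad$; this is immediate because both sides are parametrized by their first coordinate together with the remaining data, and the formula for $\phi$ does not mix the first coordinate with the choice of $n$.
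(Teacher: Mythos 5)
Your proposal is correct and matches the paper's approach exactly: the paper simply states that the lemma follows by combining Proposition \ref{prop:quadqfbijection} with Lemma \ref{lem:qftorootbijection}, and your write-up supplies the (routine) details of that composition, including the $\RR^+$-equivariance of the linear map $\phi$.
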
 

Proposition \ref{prop:ap1orbit} implies that an $n-$quadruple class containing $\bol{q}$ is taken to the $\PGL(2, \ZZ)$ orbit of $p_{\bol{q}}$ . In particular, for $n>0$, there is a unique representative of the class in the fundamental domain. For more on this, see Section \ref{sec:fdomdist}.

\section{Quadruple depth}\label{sec:quaddepth}
Since the depth of a quadruple, $\delta(\bol{q})$, is constant upon scaling by $\RR^+$, we can study the depth of Descartes quadruples by transferring the picture to $\Proj^1(\CC)$. If $\bol{q}$ generates a bounded or half-plane packing, there is a unique shortest reduced word $W$ such that $W\bol{q}$ contains a non-positive curvature. If $W\neq \Id$ and $W$ starts with $S_j$, then this circle must appear in the $j$\textsuperscript{th} position in $W\bol{q}$. Otherwise, it appears in $\bol{q}$, and can be in any position. This motivates the following definition.

\begin{definition}
A depth element $W$ is either a non-identity reduced word in $\Ap$, or an integer between $1$ and $4$. In the latter case, we write $W=\Id_j$ to refer to the depth element corresponding to the integer $j$.
\end{definition}

In particular, the above construction associates a unique depth element to $\bol{q}$. If $\bol{q}$ generates the strip packing, then the same result holds, except there are two circles of minimal curvature, and we get two depth elements.

\begin{definition}
If $W$ is a depth element, define
\[D_W:=\{p_{\bol{q}}:W\text{ is a depth element for $\bol{q}$}\}\subseteq\Proj^1(\CC),\]
which we refer to as a depth circle. For $m\geq 0$ an integer, define
\[D_m:=\{p_{\bol{q}}:m\in\delta(\bol{q})\}\subseteq\Proj^1(\CC).\]
Note that $D_m$ is the union of $D_W$ over all $W$ with length $m$.
\end{definition}

It is clear that $D_W$ (and therefore $D_m$) is a closed set. The boundary of $D_W$ consists of the set of $p_{\bol{q}}\in D_W$ for which $\MC(\bol{q})=0$ (a half-plane or strip packing), and the interior consists of the set of $p_{\bol{q}}\in D_W$ for which $\MC(\bol{q})>0$ (a bounded packing).

Furthermore, if $W\neq W'$, then the interiors of $D_W$ and $D_{W'}$ are disjoint. Any point in their intersection corresponds to a packing containing two circles of curvature zero, which is necessarily the strip packing (scaled). How do the regions $D_W$ subdivide $\Proj^1(\CC)$? Start with $D_0$, which is the union of $D_{\Id_j}$ for $1\leq j\leq 4$. Write $\phi(\bol{q})=[n, A, B, C]$, and this respectively corresponds to the four inequalities:
\begin{equation}\label{eqn:first4circles}
n\leq 0,\quad A-n\leq 0,\quad C-n\leq 0,\quad A+C-B-n\leq 0.
\end{equation}
If $A=0$ then $B=n=0$ and $C>0$, so only the first two inequalities are true. Otherwise, dividing by $A>0$ and using the expressions for $x, y$ in Equation \eqref{eqn:xyexpressions}, these inequalities respectively give
\begin{equation}\label{eqn:depth0}
y\leq 0, \quad y\geq 1, \quad x^2+(y-1/2)^2\leq 1/4, \quad (x+1)^2+(y-1/2)^2\leq 1/4.
\end{equation}
Thus $D_{\Id_j}$ is a circle for all $1\leq j\leq 4$ (with the convention that a half-plane is a circle with infinite radius), and the picture is depicted in Figure \ref{fig:depth0}. Observe that the four circles form a Descartes configuration, a part of the Apollonian strip packing scaled by $\frac{1}{2}$ and positioned between the $x-$axis and $y=1$.

\begin{figure}[htb]
	\includegraphics[scale=1.7]{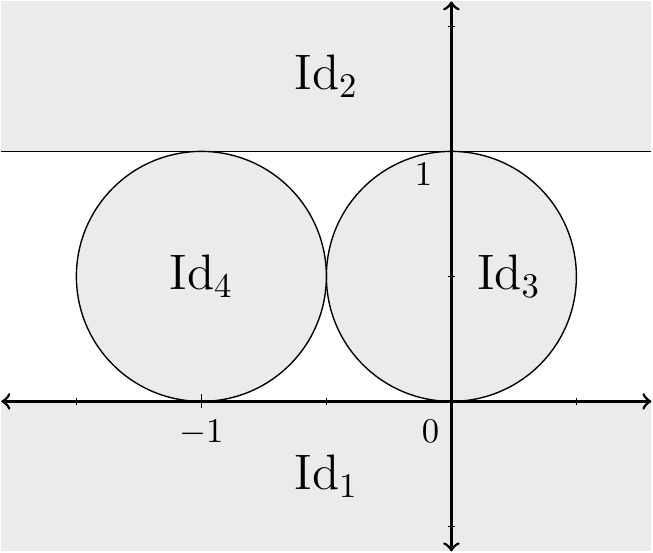}
	\caption{Depth circles with $\delta(\bol{q})=0$, labeled by depth element.}\label{fig:depth0}
\end{figure}

Going further, consider a general $D_W$ that starts with $S_j$. This region is determined by the equation
\[(W\bol{q})_j\leq 0,\]
which takes the form $-tn+uA+vB+wC\leq 0$, for the integers $t, u, v, w$ defined by
\[\text{Row $j$ of }WS_{\theta}=(-t, u, v, w).\]
Assume $A>0$ ($A=0$ corresponds to $\infty$, which can be added back in later), and dividing by $A$ yields
\[-ty+u-2vx+w(x^2+y^2)\leq 0.\]
If $w=0$, this gives a half-plane, whose interior must intersect with the interior of either $D_{\Id_1}$ or $D_{\Id_2}$, a contradiction. Therefore $w\neq 0$, divide by $w$, and rearrange to get
\begin{equation}\label{eqn:DPcircle}
\left(x-\dfrac{v}{w}\right)^2+\left(y-\dfrac{t}{2w}\right)^2\leq \dfrac{t^2+4v^2-4uw}{4w^2}=\dfrac{1}{4w^2},
\end{equation}
where the last equality is due to Corollary \ref{cor:tuvweqn}. Since we divided by $w$, we must switch the inequality if $w<0$. However, this would correspond to the exterior of a circle, which is not possible since the interior would again intersect the interior of the half-planes in $D_0$. Therefore $w>0$, and we obtain a circle and its interior as the solution set. Note that this also implies that $t>0$, as the centre needs to be in the upper half plane.

This discussion has proven the following lemma.

\begin{lemma}
Let $W$ be a depth element corresponding to the equation $-tn+uA+vB+wC\leq 0$. Then $D_W$ is a circle, defined by Equation \eqref{eqn:depth0} if $W=\Id_j$, and Equation \eqref{eqn:DPcircle} otherwise.
\end{lemma}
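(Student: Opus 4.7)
The plan is to mirror the case analysis already developed in the paragraph preceding the lemma. First I would dispose of the four base cases $W=\Id_j$ separately: the defining condition is that the $j$-th coordinate of the Descartes quadruple $\theta[n,A,B,C]$ is nonpositive, which via Proposition \ref{prop:quadqfbijection} translates directly to the four inequalities in Equation \eqref{eqn:first4circles}. Substituting the expressions $x=-B/(2A)$, $y=n/A$, $x^2+y^2=C/A$ from Equation \eqref{eqn:xyexpressions} (after dividing by $A>0$, with the degenerate case $A=0$ forcing $B=n=0$ and corresponding to $\infty\in\Proj^1(\CC)$) yields the four relations in Equation \eqref{eqn:depth0}, each of which cuts out a closed disc or half-plane.

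For a general non-identity reduced word $W$ starting with $S_j$, I would use Lemma \ref{lem:Apthetaqf} and the coset remark immediately preceding it: left multiplication of $\bol{Q}$ by $WS_\theta\in\Ap S_\theta$ realizes the action of $W$ on the associated Descartes quadruple. Writing row $j$ of $WS_\theta$ as $(-t,u,v,w)$, the condition $(W\theta(\bol{Q}))_j\leq 0$ becomes the linear inequality $-tn+uA+vB+wC\leq 0$. Dividing by $A>0$ and inserting Equation \eqref{eqn:xyexpressions} rewrites this as $w(x^2+y^2)-2vx-ty+u\leq 0$.

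The crux is the sign analysis of $w$. If $w=0$ the inequality defines a half-plane, whose interior must intersect the interior of either $D_{\Id_1}$ or $D_{\Id_2}$; but since $W$ is nontrivial, disjointness of interiors of distinct $D_{W'}$ (reflecting that only the strip packing carries two circles of curvature zero) rules this out. Similarly, $w<0$ would reverse the inequality to describe the exterior of a circle, again forcing an interior overlap with a depth-$0$ half-plane. Hence $w>0$, and completing the square together with Corollary \ref{cor:tuvweqn}, which supplies the identity $t^2+4v^2-4uw=1$, produces the closed disc in Equation \eqref{eqn:DPcircle} of radius $1/(2w)$ centred at $(v/w,t/(2w))$. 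Requiring this centre to lie in the upper half plane forces $t>0$, and the point $\infty$ is incorporated or excluded by continuity in the limit $A\to 0$.

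The main obstacle I foresee is precisely the sign determination for $w$; everything else is direct algebra or substitution. The sign argument rests on the geometric fact that deeper depth circles must nest inside the region bounded by the four depth-$0$ circles, which in turn rests on the disjointness of interiors of the $D_W$ established earlier. Making that disjointness rigorous is the only non-mechanical step.
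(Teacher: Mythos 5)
Your proposal is correct and follows essentially the same route as the paper, which in fact presents this argument as the discussion immediately preceding the lemma (handling the $\Id_j$ cases via Equations \eqref{eqn:first4circles}--\eqref{eqn:depth0}, then using row $j$ of $WS_{\theta}$, ruling out $w=0$ and $w<0$ by disjointness of interiors from the depth-$0$ half-planes, and completing the square with Corollary \ref{cor:tuvweqn}). The one step you flag as non-mechanical, the disjointness of interiors of distinct depth circles, is likewise established earlier in the paper by noting that a common interior point would force two distinct shortest reduced words reaching a non-positive curvature, so your treatment matches the paper's.
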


\begin{definition}
The coefficient quadruple corresponding to the depth element $W$ is the integral quadruple $(t, u, v, w)$. As long as $W\neq\Id_j$, we have $t, u, w>0$.
\end{definition}

To see how these circles fit together, define $D:=\cup_{m=0}^{\infty} D_m$, and consider Figure \ref{fig:depth2}, which depicts $D_0\cup D_1\cup D_2$.

\begin{figure}[htb]
	\includegraphics[scale=0.98]{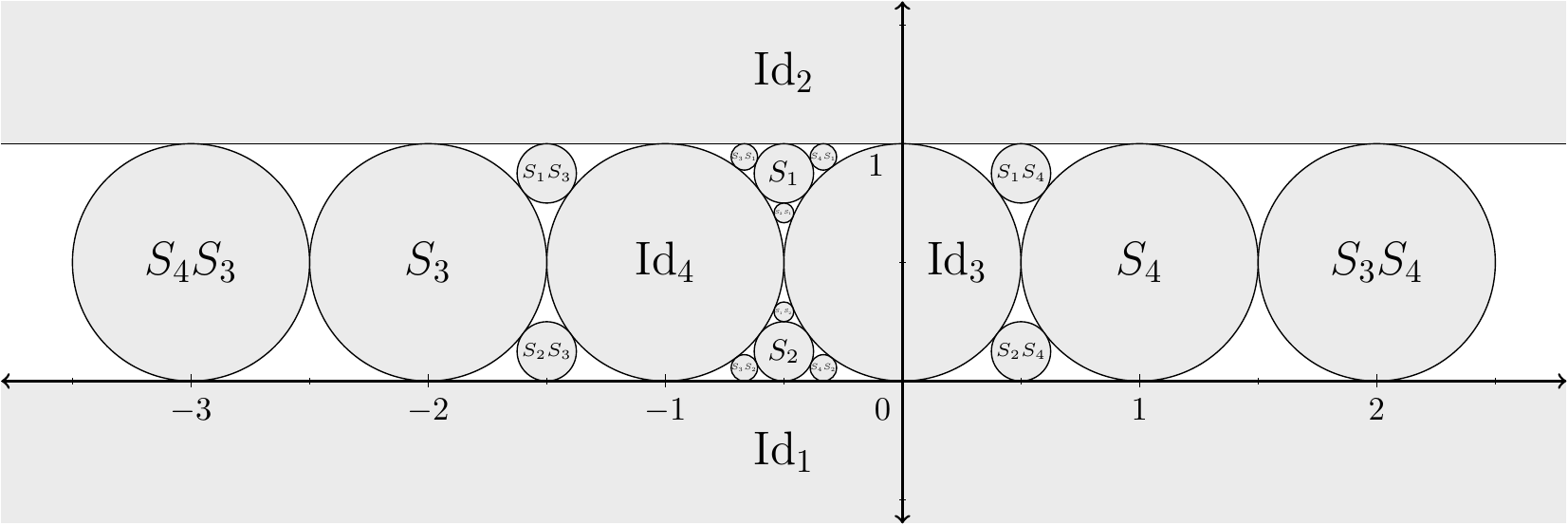}
	\caption{Depth circles with $\delta(\bol{q})\leq 2$, labelled by depth element.}\label{fig:depth2}
\end{figure}

We appear to be continuing the strip packing!

\begin{lemma}\label{lem:fourtangent}
Let $W\in\Ap$, and consider the four circles corresponding to $(W\bol{q})_i\leq 0$, $1\leq i\leq 4$. Then these circles are mutually tangent.
\end{lemma}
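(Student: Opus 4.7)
The plan is to read off each of the four circles directly from the rows of $W_\theta := WS_\theta$, and then verify mutual tangency by using the matrix identity $W_\theta Q_\theta W_\theta^T = Q_D$ from Lemma \ref{lem:Apthetaqf}. Writing the $i$th row of $W_\theta$ as $(-t_i, u_i, v_i, w_i)$, the diagonal entries of $W_\theta Q_\theta W_\theta^T$ give Corollary \ref{cor:tuvweqn}, namely $t_i^2 + 4v_i^2 - 4u_iw_i = 1$; computing the off-diagonal entries in exactly the same way yields the bilinear companion identity
\begin{equation*}
t_it_j + 4v_iv_j - 2(u_iw_j + u_jw_i) = -1 \qquad (i\neq j).
\end{equation*}
These two families of identities are the only algebraic input needed.

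Next I would translate the inequality $(W\bol{q})_i\leq 0$ into the $(x,y)$-plane exactly as in the derivation of \eqref{eqn:DPcircle}: substituting $\phi(\bol{q})=[n,A,B,C]$, working in the chart $A>0$, and using \eqref{eqn:xyexpressions}, it becomes $-t_iy + u_i - 2v_ix + w_i(x^2+y^2)\leq 0$. When $w_i\neq 0$, this defines a Euclidean disk with centre $(v_i/w_i,\ t_i/(2w_i))$ and radius $1/(2|w_i|)$; when $w_i=0$, it defines a half-plane bounded by the line $2v_ix + t_iy = u_i$, whose normal has unit length thanks to $t_i^2+4v_i^2=1$. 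Now take $i\neq j$ with $w_i,w_j\neq 0$ and compute the squared distance between centres:
\begin{equation*}
\frac{4(v_iw_j - v_jw_i)^2 + (t_iw_j - t_jw_i)^2}{4w_i^2w_j^2}.
\end{equation*}
Expanding the numerator and substituting $t_k^2+4v_k^2 = 1+4u_kw_k$ for $k=i,j$ along with the bilinear identity collapses it to $(w_i+w_j)^2$. Hence the distance between centres is $|w_i+w_j|/(2|w_iw_j|)$, which equals $1/(2|w_i|)+1/(2|w_j|)$ when $w_iw_j>0$ and $|1/(2|w_i|)-1/(2|w_j|)|$ otherwise — in both cases, tangency.

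For the remaining edge cases I would use the same identities: if $w_i=0$ but $w_j\neq 0$, the signed distance from the centre of circle $j$ to the line bounding circle $i$ evaluates, via the bilinear identity specialised to $w_i=0$, to exactly $1/(2|w_j|)$, the radius of $j$. If $w_i=w_j=0$, the two identities combine as $(t_it_j+4v_iv_j)^2 = 1 = (t_i^2+4v_i^2)(t_j^2+4v_j^2)$, and equality in Cauchy–Schwarz forces $(t_i,2v_i)\parallel(t_j,2v_j)$, so the bounding lines are parallel and meet tangentially at $\infty\in\Proj^1(\CC)$. The case $W=\Id$ is subsumed: the rows of $S_\theta$ already satisfy the required identities, and one recovers the configuration of Figure \ref{fig:depth0}. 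The main obstacle is bookkeeping rather than ideas — specifically handling the signs of $w_i$ (which can be negative when $W$ is not a depth element, producing internal rather than external tangencies) and treating the half-plane/$\infty$ degeneracies uniformly with the generic circle case.
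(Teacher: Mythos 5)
Your proof is correct in substance but takes a genuinely different route from the paper's. The paper avoids all computation: for each pair $(i,j)$ it exhibits the explicit quadruple $\bol{q}=W^{-1}\bol{q}'$, where $\bol{q}'$ is the permutation of $(0,0,1,1)$ with zeroes in positions $i$ and $j$, so that $p_{\bol{q}}$ lies on both circles; since the interiors of the four regions are pairwise disjoint (a common interior point would give a packing with two circles of negative curvature), intersection upgrades automatically to tangency. You instead extract the full Gram identity from Lemma \ref{lem:Apthetaqf} --- the off-diagonal relation $t_it_j+4v_iv_j-2(u_iw_j+u_jw_i)=-1$ alongside the diagonal relation of Corollary \ref{cor:tuvweqn} --- and verify tangency metrically. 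The algebra checks out (the numerator of the squared centre distance really does collapse to $(w_i+w_j)^2$), and your route has the virtue of making the Descartes structure visible directly in the coefficient quadruples and of distinguishing internal from external tangency via the signs of the $w_i$, at the cost of the half-plane and sign case analysis. One point you still need to close: tangency requires the two circles to be distinct and to meet in exactly one point, and your distance formula alone permits the degenerate case $w_i+w_j=0$ with $|w_i|=|w_j|$, where ``distance equals the difference of the radii'' reads as $0=0$ and the circles coincide (likewise, your two parallel bounding lines could a priori be the same line). Coincidence forces rows $i$ and $j$ of $WS_{\theta}$ to be proportional, contradicting the invertibility of $WS_{\theta}$ --- a one-line addendum, but it should be said.
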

\begin{proof}
It suffices to prove that for each pair $(i,j)$ with $1\leq i<j\leq 4$, there is a Descartes quadruple $\bol{q}$ such that $W\bol{q}$ has zeroes in the $i$\textsuperscript{th} and $j$\textsuperscript{th} positions. Indeed, this would imply that the circles corresponding to $(W\bol{q})_i\leq 0$ and $(W\bol{q})_j\leq 0$ share the point $p_{\bol{q}}$, whence they intersect. They must be tangent as otherwise, their interiors would overlap, a contradiction.

To prove this claim, let $\bol{q}'$ be the permutation of $(0,0,1,1)$ having zeroes in positions $i$ and $j$, and take $\bol{q}=W^{-1}\bol{q}'$.
\end{proof}

\begin{theorem}
The set $D$, the union of all depth circles, is the strip packing scaled by $\frac{1}{2}$.
\end{theorem}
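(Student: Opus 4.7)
The plan is to induct on the length $|W|$ of the reduced word, showing simultaneously that every depth circle $D_W$ is a circle of the strip packing scaled by $\frac{1}{2}$ (call this packing $\mathcal{P}$), and conversely that every circle of $\mathcal{P}$ is a depth circle.

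For the base case $W = \Id$, Equation \eqref{eqn:depth0} identifies $D_{\Id_1}, D_{\Id_2}, D_{\Id_3}, D_{\Id_4}$ as the lines $y = 0$ and $y = 1$ together with the two disks of radius $\frac{1}{2}$ centred at $(0, \frac{1}{2})$ and $(-1, \frac{1}{2})$, which is precisely the ``top'' Descartes configuration of $\mathcal{P}$.

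For the inductive step, I would fix a reduced word $W = S_{j_1} W'$ of length $m \geq 1$ and compare the Lemma \ref{lem:fourtangent} configuration for $W$ with that for $W'$. A short bookkeeping argument shows that the depth element at position $k$ of $W\bol{q}$ is either the suffix of $W$ beginning at the first occurrence of $S_k$, or $\Id_k$ if no such occurrence exists. Since $S_{j_1}$ modifies only position $j_1$ of a quadruple, the depth elements at positions $k \neq j_1$ coincide for $W$ and $W'$, while at position $j_1$ the depth element for $W$ is $W$ itself and for $W'$ is a strictly shorter word. Hence the two Descartes configurations of depth circles differ by exactly one Apollonian move, which replaces the old $j_1$-position circle by $D_W$. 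By the inductive hypothesis the $W'$-configuration lies in $\mathcal{P}$, so the $W$-configuration does as well, and in particular $D_W \in \mathcal{P}$. For the reverse containment, every circle of $\mathcal{P}$ sits in some Descartes configuration reachable from the top one by iterated Apollonian moves; running the same induction identifies that configuration as the Lemma \ref{lem:fourtangent} configuration of some reduced word, exhibiting the circle as a depth circle.

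The main obstacle is the bookkeeping step, i.e.\ precisely identifying the depth elements at the three ``unchanged'' positions when passing from $W'$ to $W = S_{j_1}W'$, and verifying that they really match the corresponding depth circles at the earlier stage. Once this is in place, the theorem reduces to the observation that prepending a generator to a reduced word corresponds to a single Apollonian move on the associated Descartes configuration of depth circles, so the inductive structure of $\mathcal{P}$ and of the depth-circle decomposition agree.
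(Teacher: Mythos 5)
Your proof is correct and follows essentially the same route as the paper: both rest on Lemma \ref{lem:fourtangent} together with the observation that for $W = S_{j_1}W'$ the circles at positions $k\neq j_1$ are depth circles of strictly shorter words (your ``bookkeeping step,'' which the paper states implicitly as ``if $i\neq j$, this is a circle in $D_{m'}$ for some $m'<m$''), so that each $D_W$ is the fourth circle of a Descartes configuration whose other three members are already present. The only difference is cosmetic: you run the induction by comparing the Lemma \ref{lem:fourtangent} configurations of $W$ and $W'$ directly, whereas the paper compares the configuration of $W$ against $\cup_{k=0}^{m-1}D_k$.
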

\begin{proof}
As seen in Figure \ref{fig:depth0}, $D_0$ is the start of the strip packing scaled by $\frac{1}{2}$. Take $W$ to be a depth element of length $m>0$ that begins with $S_j$. By Lemma \ref{lem:fourtangent}, the circles corresponding to $(W\bol{q})_i\leq 0$ are mutually tangent for $1\leq i\leq 4$. If $i=j$, this is $D_W$, and if $i\neq j$, this is a circle in $D_{m'}$ for some $m'<m$. Therefore we are drawing the fourth circle in a Descartes configuration, where three of the circles are present in $\cup_{k=0}^{m-1} D_k$. Thus, adding in the circles in $D_m$ corresponds to going one level deeper in the strip packing, and we generate the entire strip packing as $m\rightarrow\infty$.
\end{proof}

\begin{remark}
The strip packing is the analogue of the fractals of Kocik and Holly (see Remark \ref{rem:connection}). In particular, if $\bol{q}$ is a Descartes quadruple, then $\bol{q}$ generates a
\begin{itemize}
\item bounded packing if and only if $p_{\bol{q}}$ lies in the interior of a depth circle;
\item half-plane packing if and only if $p_{\bol{q}}$ lies on the boundary of a unique depth circle;
\item strip packing if and only if $p_{\bol{q}}$ is the tangency point of two depth circles;
\item full-plane packing if and only if $p_{\bol{q}}$ is not contained in any depth circle.
\end{itemize}
\end{remark}

\section{Quadruple height}\label{sec:quadheight}

\begin{definition}
Let $\bol{q}$ be an $n-$quadruple with $n>0$. Define the height of $\bol{q}$ to be
\[H(\bol{q}):=\frac{\MC(\bol{q})}{n}\in [0, 1).\]
\end{definition}

If $\delta(\bol{q})=0$, there are no obvious biases for where $H(\bol{q})$ should lie in $[0, 1)$. On the other hand, if $\delta(\bol{q})>0$, then there may be a layer of circles between $\bol{q}$ and the circle of smallest curvature, whence $H(\bol{q})$ would be somewhat small. To this end, we study the behaviour of $H(\bol{q})$ on the sets $D_W$ for $W\neq\Id_1$ (which corresponds to $n\leq 0$).

\begin{proposition}\label{prop:Hbound}
Let $W\neq\Id_1$ be a depth element with coefficient quadruple $(t, u, v, w)$. Then
\[0\leq H(\bol{q})\leq t-\sqrt{t^2-1},\]
whenever $p_{\bol{q}}\in D_W$.
\end{proposition}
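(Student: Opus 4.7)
The plan is to transfer the height into a single real-valued function on the depth disk $D_W$, and then minimize it by a short Lagrange-style calculation, with Corollary \ref{cor:tuvweqn} doing all the algebraic work.

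First I would convert the height into coordinates. Writing $\phi(\bol{q}) = [n, A, B, C]$ and reading Row $j$ of $WS_{\theta}$ as $(-t, u, v, w)$, the hypothesis $p_{\bol{q}} \in D_W$ says $W$ is the depth element of $\bol{q}$, so $(W\bol{q})_j$ is the (non-positive) minimal curvature of the packing. Hence
\[
\MC(\bol{q}) = tn - uA - vB - wC, \qquad H(\bol{q}) = t - \frac{uA + vB + wC}{n}.
\]
Substituting $A/n = 1/y$, $B/n = -2x/y$, $C/n = (x^2+y^2)/y$ from \eqref{eqn:xyexpressions} converts this into
\[
H(\bol{q}) = t - f(x,y), \qquad f(x,y) := \frac{u - 2vx + w(x^2+y^2)}{y},
\]
and the defining inequality of $D_W$ becomes $f(x,y) \leq t$. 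The lower bound $H(\bol{q}) \geq 0$ is immediate.

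For the upper bound I would minimize $f$ on $D_W$. Taking first $W$ nontrivial so that $t, u, w > 0$, the system $\nabla f = 0$ forces $x_0 = v/w$ and $y_0^2 = (uw - v^2)/w^2$. Corollary \ref{cor:tuvweqn} rewritten as $uw - v^2 = (t^2-1)/4$ simplifies this to $y_0 = \sqrt{t^2-1}/(2w)$, and a direct substitution gives
\[
f(x_0, y_0) = \frac{uw - v^2}{w y_0} + w y_0 = \frac{\sqrt{t^2-1}}{2} + \frac{\sqrt{t^2-1}}{2} = \sqrt{t^2-1}.
\]
The critical point lives in $D_W$ because its distance $(t - \sqrt{t^2-1})/(2w)$ to the centre $(v/w, t/(2w))$ is at most $1/(2w)$ whenever $t \geq 1$. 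Since $f \equiv t$ on $\partial D_W$ and $\sqrt{t^2-1} \leq t$, this critical value is the global minimum of $f$ on $D_W$, yielding $H(\bol{q}) \leq t - \sqrt{t^2-1}$.

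The three residual cases $W = \Id_2, \Id_3, \Id_4$ have coefficient quadruples $(1,1,0,0)$, $(1,0,0,1)$, $(1,1,-1,1)$, so the claim reduces to $H(\bol{q}) \leq 1$, which is immediate from $\MC(\bol{q}) < n$ (any enclosing or zero-curvature circle has strictly larger radius than the $n$-circle it contains). The main obstacle is the critical-point algebra: both $y_0$ and $f(x_0, y_0)$ must collapse to expressions involving $\sqrt{t^2-1}$, each using Corollary \ref{cor:tuvweqn} in a slightly different form. Once that cancellation goes through, verifying $(x_0, y_0) \in D_W$ and ruling out boundary extrema (using the constant value $f|_{\partial D_W} = t$) is routine.
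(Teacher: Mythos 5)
Your proof is correct and follows essentially the same route as the paper's: both express $H(\bol{q})$ in the coordinates of $p_{\bol{q}}=x+iy$, reduce to a one-variable optimization at $x=v/w$, use Corollary \ref{cor:tuvweqn} to land the critical point at $y=\sqrt{t^2-1}/(2w)$ with critical value giving $t-\sqrt{t^2-1}$, check that this point lies in $D_W$, and compare against the constant value on $\partial D_W$; your minimization of $f=t-H$ is the same computation as the paper's direct maximization of $H$. (You even correct a sign slip in the paper's displayed formula for $H$, and your separate dispatch of the $t=1$ cases $\Id_2,\Id_3,\Id_4$ is a slightly tidier way to sidestep the degenerate situation where the critical point sits on the real axis.)
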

\begin{proof}
If $W=\Id_2$, $t=1$ and $D_W$ is given by $y\geq 1$. We compute
\[H(\bol{q})=1-\dfrac{A}{n}=1-\dfrac{1}{y},\]
so all heights in $[0, 1-\sqrt{1^2-1})$ are possible.

Otherwise, $D_W$ is a circle, and $t,w>0$. Since $p_{\bol{q}}\in D_W$, it follows that $H(\bol{q})=-tn+uA+vB+wC$, where $\phi(\bol{q})=[n, A, B, C]$. Thus
\begin{equation}\label{eqn:Hofq}
H(\bol{q})=-t+u\dfrac{A}{n}+v\dfrac{B}{n}+w\dfrac{C}{n}=\dfrac{w}{y}\left(\dfrac{1}{4w^2}-\left(x-\frac{v}{w}\right)^2-\left(y-\frac{t}{2w}\right)^2\right),
\end{equation}
where $p_{\bol{q}}=x+iy$. This is continuous with respect to $x$ and $y$, and clearly hits the minimum of $0$ on the boundary of $D_W$. The maximal value must have $x=\frac{v}{w}$, whence we maximize the function
\[f(y)=\dfrac{1}{4wy}-\dfrac{w}{y}\left(y-\frac{t}{2w}\right)^2.\]
Taking the derivative and setting it to zero yields $y=\pm\frac{\sqrt{t^2-1}}{2w}$, and the positive root is a local maximum. Since
\[\dfrac{t-1}{2w}\leq\dfrac{\sqrt{t^2-1}}{2w}<\dfrac{t+1}{2w},\]
the local maximum falls inside $D_W$, and therefore furnishes the maximum value on $D_W$. Plugging in this value into the equation for $H(\bol{q})$ gives the result.
\end{proof}

A follow-up to Proposition \ref{prop:Hbound} is to consider the distribution of $H(\bol{q})$ with respect to the hyperbolic metric, when $D_W$ does not touch the real line. First, an expression for the hyperbolic area of a Euclidean circle is required.

\begin{lemma}\label{lem:circleharea}
Let $C$ be a circle in $\uhp$ with centre $a+hi$ and radius $r$, with $h>r$. Then the hyperbolic area of $C$ is
\[2\pi\left(\dfrac{h}{\sqrt{h^2-r^2}}-1\right).\]
\end{lemma}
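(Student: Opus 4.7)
The plan is to use the fact that any Euclidean circle $C\subset\uhp$ with $h>r$ is automatically a hyperbolic circle. Indeed, the horizontal reflection $x\mapsto 2a-x$ is a hyperbolic isometry, so by symmetry the hyperbolic center of $C$ lies on the vertical line $x=a$; similarly, inversions in circles orthogonal to $\RR$ and centered on $x=a$ are hyperbolic isometries preserving this vertical line, and these generate the $1$-parameter stabilizer whose orbits are precisely the Euclidean circles centered on $x=a$. Once $C$ is identified as a hyperbolic disk, the computation reduces to reading off its hyperbolic radius and applying the standard formula $2\pi(\cosh\rho-1)$ for the hyperbolic area of a hyperbolic disk of radius $\rho$.

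The first concrete step is to locate the hyperbolic center $a+h_ci$ and radius $\rho$. The top and bottom points of $C$ are $a+(h+r)i$ and $a+(h-r)i$, and hyperbolic distance along a vertical geodesic between $a+yi$ and $a+h_ci$ is $|\ln(y/h_c)|$. Setting the two distances equal gives
\[\frac{h+r}{h_c}=\frac{h_c}{h-r}, \qquad\text{so}\qquad h_c=\sqrt{h^2-r^2},\]
and the common value is $\rho=\tfrac12\ln\tfrac{h+r}{h-r}$. Then
\[\cosh\rho=\tfrac12\!\left(\sqrt{\tfrac{h+r}{h-r}}+\sqrt{\tfrac{h-r}{h+r}}\right)=\frac{(h+r)+(h-r)}{2\sqrt{h^2-r^2}}=\frac{h}{\sqrt{h^2-r^2}},\]
and plugging into $2\pi(\cosh\rho-1)$ gives the claimed expression.

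If one prefers to avoid invoking the hyperbolic disk area formula, the lemma can instead be obtained by slicing horizontally: the hyperbolic area is
\[\iint_C\frac{\diff x\,\diff y}{y^2}=\int_{h-r}^{h+r}\frac{2\sqrt{r^2-(y-h)^2}}{y^2}\,\diff y,\]
and the substitution $y-h=r\sin\phi$ (or equivalently the Weierstrass substitution $u=\tan(\phi/2)$ after expanding) turns this into an elementary integral evaluating to $2\pi\bigl(h/\sqrt{h^2-r^2}-1\bigr)$. The hardest part here is keeping the trigonometric bookkeeping honest; this is the only potential stumbling block, and it is precisely what the hyperbolic-geometry viewpoint circumvents, so that route is the cleaner one.
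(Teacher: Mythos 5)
The paper offers no argument here at all: it dismisses the lemma as classical and cites Lemma 2.2 of \cite{AS82}. So your proposal is, by default, a genuinely different (and more useful) route, and it is essentially correct. Your main computation is right: the hyperbolic centre is $a+i\sqrt{h^2-r^2}$, the hyperbolic radius is $\rho=\tfrac12\ln\tfrac{h+r}{h-r}$, $\cosh\rho=h/\sqrt{h^2-r^2}$, and $2\pi(\cosh\rho-1)$ gives the stated area; the slicing integral also checks out (fold $\int_{-\pi/2}^{\pi/2}$ into $\tfrac12\int_0^{2\pi}$ and use $\int_0^{2\pi}\frac{\diff\phi}{h+r\sin\phi}=\frac{2\pi}{\sqrt{h^2-r^2}}$ together with its derivative in $h$). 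The one step I would not let stand is your justification that $C$ is a hyperbolic circle. Inversions in circles orthogonal to $\RR$ and centred on the line $x=a$ are necessarily centred at $(a,0)$, and the group they generate consists of the hyperbolic translations along the geodesic $x=a$ (together with reflections); the orbit of a point off that geodesic under this one-parameter group is a Euclidean ray through $(a,0)$ — a hypercycle — not a Euclidean circle centred on $x=a$. The clean repair is the standard one: hyperbolic circles are Euclidean circles because isometries are (anti-)M\"obius and hence circle-preserving, so the hyperbolic circle about $a+i\sqrt{h^2-r^2}$ through $a+(h+r)i$ is a Euclidean circle; it is symmetric under the reflection $x\mapsto 2a-x$ and, by your distance computation, passes through both $a+(h+r)i$ and $a+(h-r)i$, so it has these as the endpoints of a vertical diameter and therefore coincides with $C$. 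With that sentence fixed — or by simply keeping only your integral computation, which needs none of this — the proof is complete, and it has the advantage over the paper's treatment of being self-contained.
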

\begin{proof}
This is classical; see, for example, Lemma 2.2. of \cite{AS82}.
\end{proof}

\begin{proposition}\label{prop:heightuniform}
Let $W$ be a depth element, where $D_W$ is a circle that does not touch the real axis. Then the values of $H(\bol{q})$ for $p_{\bol{q}}\in D_W$ are uniform in $[0, t-\sqrt{t^2-1}]$ with respect to the hyperbolic metric on $D_W$.
\end{proposition}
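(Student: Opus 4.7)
The plan is to compute, for each $h \in [0, t - \sqrt{t^2-1}]$, the hyperbolic area of the super-level set
\[ L_h := \{p_{\bol{q}} \in D_W : H(\bol{q}) \geq h\}, \]
and show it is a linear (in fact affine) function of $h$. Since the height $H$ attains $0$ on $\partial D_W$ and the maximum on the interior, a linear dependence of $\mu(L_h)$ on $h$ is exactly the statement that the push-forward of the (normalized) hyperbolic measure on $D_W$ under $H$ is Lebesgue measure on $[0, t - \sqrt{t^2-1}]$.

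First I would determine the shape of each level curve $\{H(\bol{q}) = h\}$. Starting from the explicit formula \eqref{eqn:Hofq}, I multiply through by $y$ and rearrange to get
\[w\left(x - \tfrac{v}{w}\right)^2 + wy^2 + (h-t)y + \tfrac{t^2-1}{4w} = 0,\]
and completing the square in $y$ shows that $\{H = h\}$ is the Euclidean circle with centre $\left(\tfrac{v}{w}, \tfrac{t-h}{2w}\right)$ and radius $r_h := \tfrac{\sqrt{h^2 - 2ht + 1}}{2w}$. At the endpoints this behaves as expected: $h=0$ recovers $D_W$ itself, and $h = t - \sqrt{t^2-1}$ collapses the circle to the maximizing point identified in the proof of Proposition \ref{prop:Hbound}.

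Next I would apply Lemma \ref{lem:circleharea} to each of these Euclidean circles. For this I need the centre height $H_h := (t-h)/(2w)$ to exceed $r_h$; the miraculous cancellation
\[ H_h^2 - r_h^2 = \tfrac{(t-h)^2 - (h^2 - 2ht + 1)}{4w^2} = \tfrac{t^2 - 1}{4w^2},\]
which is independent of $h$, gives this inequality from the hypothesis that $D_W$ does not touch the real axis (equivalently $t > 1$), and then yields the clean formula
\[ \mu(L_h) = 2\pi\left(\frac{t-h}{\sqrt{t^2-1}} - 1\right).\]
This is an affine function of $h$, so the proportion $\mu(L_h)/\mu(L_0)$ equals $\tfrac{t - h - \sqrt{t^2-1}}{t - \sqrt{t^2-1}}$, which is the complementary CDF of the uniform distribution on $[0, t - \sqrt{t^2-1}]$, proving the proposition.

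The only non-routine step is spotting and exploiting the cancellation $H_h^2 - r_h^2 = (t^2-1)/(4w^2)$; the rest is bookkeeping built on Equation \eqref{eqn:Hofq}, Corollary \ref{cor:tuvweqn}, and Lemma \ref{lem:circleharea}. This cancellation is the geometric content of the result — the level curves of $H$ inside $D_W$ are Euclidean circles at a constant hyperbolic distance from one another as $h$ varies.
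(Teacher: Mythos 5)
Your proposal is correct and follows essentially the same route as the paper: both identify the super-level set of $H$ as a Euclidean disk contained in $D_W$ and apply Lemma \ref{lem:circleharea} to show its hyperbolic area depends affinely on the level (the paper parametrizes by $\epsilon = t-\sqrt{t^2-1}-h$ rather than by $h$, which makes the area exactly linear, but the computation is identical under that substitution). Your observation that $H_h^2 - r_h^2 = (t^2-1)/(4w^2)$ is independent of $h$ is precisely the cancellation the paper exploits implicitly when it evaluates the area as $2\pi\epsilon/\sqrt{t^2-1}$.
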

\begin{proof}
It suffices to compute the hyperbolic area of the set of points $p_{\bol{q}}\in D_W$ with $H(\bol{q})\geq t-\sqrt{t^2-1}-\epsilon$, and show that it grows linearly with $\epsilon$. To this end, using the expression for $H(\bol{q})$ in Equation \eqref{eqn:Hofq}, this inequality is true if and only if
\begin{equation}\label{eqn:Cepsilon}
\left(x-\frac{v}{w}\right)^2+\left(y-\frac{\sqrt{t^2-1}+\epsilon}{2w}\right)^2\leq \dfrac{\epsilon^2+2\epsilon\sqrt{t^2-1}}{4w^2}.
\end{equation}
For $\epsilon=0$, this is a circle of radius $0$ centred at $\left(\frac{v}{w}, \frac{\sqrt{t^2-1}}{2w}\right)$. As $\epsilon$ grows towards $t-\sqrt{t^2-1}$, the centre of the circle moves up and the radius increases, until finally we hit $D_W$. In particular, the region formed is a circle that is always contained inside $D_W$.

By Lemma \ref{lem:circleharea}, the hyperbolic area of the circle is
\[\dfrac{2\pi}{\sqrt{t^2-1}}\epsilon,\]
which proves the claim.
\end{proof}

Another consequence of Proposition \ref{prop:heightuniform} is that $t>1$ if and only if the circle does not touch the real axis. This could alternatively be demonstrated by showing that $S_2, S_3, S_4$ all fix the vector $(1, -1, -1, -1)^T$.

\begin{definition}
For $\epsilon\in[0, t-\sqrt{t^2-1}]$, define the $\epsilon-$circle of $W$ to be $C_W^{\epsilon}$, which is defined by Equation \eqref{eqn:Cepsilon}.
\end{definition}

Figure \ref{fig:epcirc} demonstrates a few $\epsilon-$circles for $D_{S_1}$, which has coefficient quadruple $(7, 4, -2, 4)$.

\begin{figure}[htb]
	\includegraphics{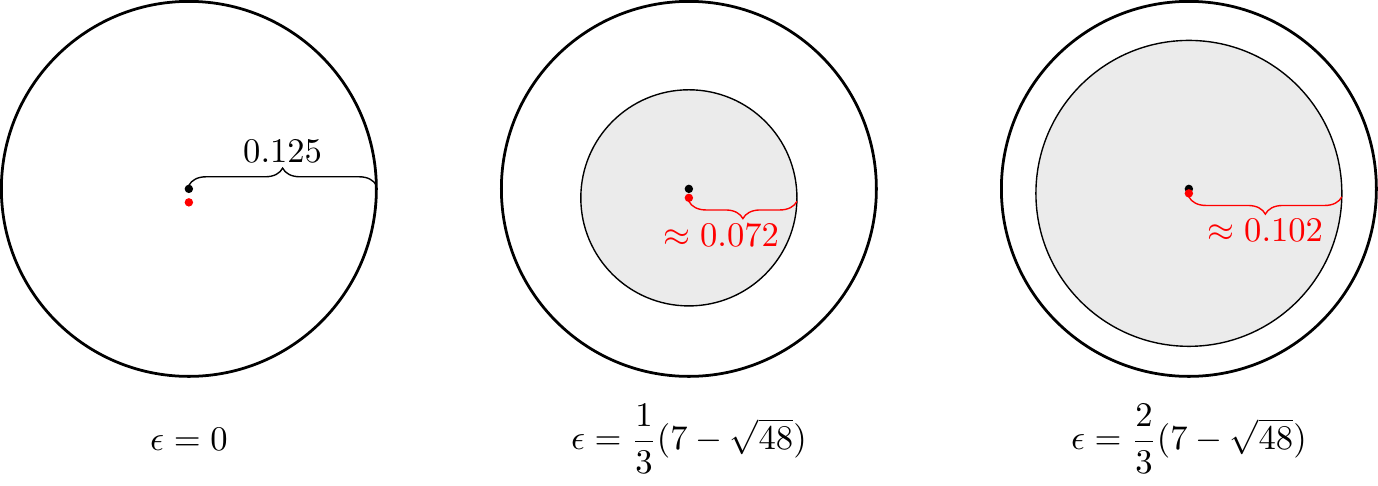}
	\caption{$C_{S_1}^{\epsilon}$ for three values of $\epsilon$. The red dots indicate the centres of $C_{S_1}^{\epsilon}$, while the black dots indicate the centres of $D_{S_1}$.}\label{fig:epcirc}
\end{figure}

\section{Fundamental domain distribution}\label{sec:fdomdist}

Take $\fdom$ to be the fundamental domain for $\PGL(2, \ZZ)$ as given in Figure \ref{fig:gl2zdom}, i.e. bounded by $x=-\frac{1}{2}$, $x=0$, and $x^2+y^2=1$. Let $\mu(\cdot)$ denote the hyperbolic area of a region of $\uhp$; it is well known that $\mu(\fdom)=\frac{\pi}{6}$.

If $n>0$, Lemma \ref{lem:apolquadprojbiject} implies that an $n-$quadruple class corresponds to a $\PGL(2, \ZZ)$ orbit of a point in $\uhp$. In particular, it corresponds to a unique point in $\fdom$. Thus we can produce a random $n-$quadruple class by picking a point uniformly at random in $\fdom$ with respect to the hyperbolic metric. 

\begin{definition}
Let $n>0$ and $p\in\uhp$. Denote the $n-$quadruple corresponding to $p$ by $\bol{\alpha}(p)$.
\end{definition}

The notation $\bol{\alpha}(p)$ depends on $n$, but $n$ will always be fixed, so no confusion will arise. The goal of this section is to prove the following theorem.

\begin{theorem}\label{thm:depthprob}
Let $W$ be a depth element with coefficient quadruple $(t, u, v, w)$, and choose a point $p\in\fdom$ uniformly at random with respect to the hyperbolic metric. Then the probability that $\bol{\alpha}(p)$ has depth element $W$ is given by
\[d_W:=\begin{cases}
0 & \text{if $\mu(D_W\bigcap \fdom)=0$;}\\
\dfrac{3}{\pi} & \text{ if $W=\Id_2$;}\\
a_W\left(\dfrac{t}{\sqrt{t^2-1}}-1\right) & \text{otherwise,}
\end{cases}\]
where
\[a_W:=\begin{cases}
2 & \text{if $W=S_1$;}\\
6 & \text{if $W=(S_4S_1)^k$ or $W=S_1(S_4S_1)^k$ with $k\geq 1$;}\\
12 & \text{otherwise.}
\end{cases}\]
Furthermore, $H(\bol{\alpha}(p))$ is distributed uniformly in $[0, t-\sqrt{t^2-1}]$ for $p\in D_W\bigcap\fdom$.
\end{theorem}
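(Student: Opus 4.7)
The probability $d_W$ equals $\mu(D_W \cap \fdom)/\mu(\fdom) = 6\mu(D_W \cap \fdom)/\pi$, so the task reduces to computing a hyperbolic area. If $W = \Id_2$, then $D_{\Id_2}\cap\fdom$ is the strip $\{-1/2 \le x \le 0,\ y \geq 1\}$ of hyperbolic area $\int_{-1/2}^{0}\int_{1}^{\infty} y^{-2}\, dy\, dx = 1/2$, immediately giving $d_{\Id_2} = 3/\pi$. For any other $W$ with $\mu(D_W \cap \fdom) > 0$ one verifies $t > 1$, so that $D_W$ is a bounded Euclidean disc of radius $1/(2w)$ centred at $(v/w, t/(2w))$ and disjoint from the real axis; Lemma \ref{lem:circleharea} then gives $\mu(D_W) = 2\pi(t/\sqrt{t^2-1}-1)$. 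It remains to show $\mu(D_W \cap \fdom) = (a_W/12)\mu(D_W)$.

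The three boundary components of $\fdom$ are the fixed sets of the $\PGL(2,\ZZ)$-reflections $U\colon z\mapsto -\bar z$, $T^{-1}U\colon z\mapsto -1-\bar z$, and $SU\colon z\mapsto 1/\bar z$. Since $v$ and $w$ are integers, $D_W$ can straddle $x=0$ only when $v=0$ (in which case it is centred there), straddle $x=-1/2$ only when $w=-2v$, and by a direct rearrangement of Corollary \ref{cor:tuvweqn} is orthogonal to the unit circle precisely when $u=w$ (for $u>w$ it sits strictly outside the unit disc and for $u<w$ strictly inside). Any boundary crossing is therefore a symmetric bisection by the associated reflection. Moreover $H$ is $\PGL(2,\ZZ)$-invariant on $D$, because $\MC(\bol{q})$ depends only on the underlying Apollonian packing and $n$ is fixed within the orbit; each of the three reflections therefore preserves $H$, and combined with Proposition \ref{prop:heightuniform} this already settles the height-uniformity claim once the measure statement is in hand.

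When none of the three integrality conditions holds, $D_W$ lies in the interior of a single $\PGL(2,\ZZ)$-translate of $\fdom$, so either $D_W \subset \fdom$ (yielding $a_W=12$) or $\mu(D_W\cap\fdom)=0$. If exactly one condition holds, the corresponding reflection bisects $D_W$ and $a_W=6$. Among pairs of simultaneous conditions, $\{v=0,\,u=w\}$ forces $t^2-4w^2=1$ with no integer solution for $w > 0$, and $\{v=0,\,w=-2v\}$ forces $w=0$; only $\{u=w,\,w=-2v\}$ is compatible. The axes of $SU$ and $T^{-1}U$ meet at $\rho=-1/2+i\sqrt{3}/2$ at angle $\pi/3$, so together they generate the order-six dihedral stabilizer of $\rho$ in $\PGL(2,\ZZ)$: it cuts $D_W$ into six hyperbolically congruent sectors of which exactly one lies in $\fdom$, giving $a_W=2$. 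The resulting Pell equation $t^2=3w^2+1$ with $v=-w/2$ integral yields $(t,u,v,w)=(7,4,-2,4)$ for $W=S_1$, and one verifies from the structure of the Apollonian tree that this is the only depth element realising both symmetries.

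The main remaining obstacle is to show that the depth elements satisfying $u=w$ (only) with $D_W\cap\fdom$ of positive measure are precisely $(S_4S_1)^k$ and $S_1(S_4S_1)^k$ for $k\geq 1$, and that no depth element with $t>1$ has $v=0$. I would carry this out by induction on word length, using the explicit action of left-multiplication by $S_i$ on the four rows of $W S_\theta$ to track $u-w$ and $v/w$ simultaneously. Base cases $W=S_4S_1$ and $W=S_1S_4S_1$ are verified directly, giving coefficient quadruples $(17,9,-3,9)$ and $(31,16,-4,16)$; the induction step checks that extending these words along the pattern preserves $u=w$ and keeps $v/w\in(-1/2,0)$, while every other admissible extension either violates orthogonality or pushes $v/w$ outside $[-1/2,0]$. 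Once this classification is secured the theorem is complete, as the height uniformity on each $D_W\cap\fdom$ follows from the reflection symmetry of $H$ explained above.
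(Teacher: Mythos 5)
Your framework is sound and in places cleaner than the paper's: reducing to $\mu(D_W\cap\fdom)/\mu(\fdom)$, getting height-uniformity from the $\PGL(2,\ZZ)$-invariance of $H$ (the paper instead verifies by hand that $SU$, $T^{-1}U$ and $T^{-1}S$ preserve the explicit $\epsilon$-circles $C_W^{\epsilon}$), the orthogonality criterion $u=w$ via $d^2-r^2=u/w$, and the order-six stabilizer of $\rho=-\tfrac12+i\sqrt{3}/2$ for $W=S_1$. But the theorem asserts \emph{specific} constants $a_W$ for \emph{specific} words, and the step that actually pins these down — showing that the positive-measure, unit-circle-bisected circles are exactly $D_{(S_4S_1)^k}$ and $D_{S_1(S_4S_1)^k}$, that no positive-measure $D_W$ with $t>1$ straddles $x=0$ or (besides $S_1$) $x=-\tfrac12$, and that every remaining positive-measure $D_W$ lies wholly inside $\fdom$ — is exactly what you defer to an unspecified ``induction on word length.'' As sketched, that induction must rule out infinitely many branches of the word tree and is not a proof. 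The paper closes this by computing the coefficient quadruples in closed form, $(t_k,u_k,v_k,w_k)=(2(k+1)^2-1,(k+1)^2,-(k+1),(k+1)^2)$, and showing that consecutive circles $D_{W_k}$ meet \emph{on} the unit circle with intersection points tending to $i$ (Lemma \ref{lem:wkcentre}); hence $D_{\Id_2}, D_{S_1}, D_{S_4S_1},\ldots$ cover all of $\partial\fdom$, and since distinct depth circles have disjoint interiors, any other $D_W$ meeting $\fdom$ in positive measure cannot cross $\partial\fdom$ and so lies entirely inside. Some such covering argument (or a genuinely completed induction) is required before your case analysis produces the values $2$, $6$, $12$.

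Two smaller points. First, the parenthetical ``for $u>w$ it sits strictly outside the unit disc and for $u<w$ strictly inside'' is not a rearrangement of Corollary \ref{cor:tuvweqn}: $d^2-r^2>1$ is perfectly compatible with a non-orthogonal transversal crossing of the unit circle. What saves you is the packing structure: $SU(D_W)$ is again a depth circle, so it is either equal to, tangent to, or interior-disjoint from $D_W$, which forces $D_W$ to be orthogonal, tangent, or disjoint to the unit circle; only then does the sign of $u-w$ decide inside versus outside. Second, for $W=S_1$ you should verify that $D_{S_1}$ is contained in the union of the six translates of $\fdom$ around $\rho$ (its hyperbolic radius is about $0.14$, well below the distance to the nearest other vertex of the tessellation, but the claim that ``exactly one sector lies in $\fdom$'' depends on it); and your appeal to ``the structure of the Apollonian tree'' to see that $S_1$ is the only word realising both symmetries can be replaced by the observation that any such circle is a hyperbolic disc centred at $\rho$, and two distinct such depth circles would be nested.
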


Consider Figure \ref{fig:fdomD4}, which depicts the intersections of $D_4$ with $\fdom$.

\begin{figure}[htb]
	\includegraphics[scale=1.1]{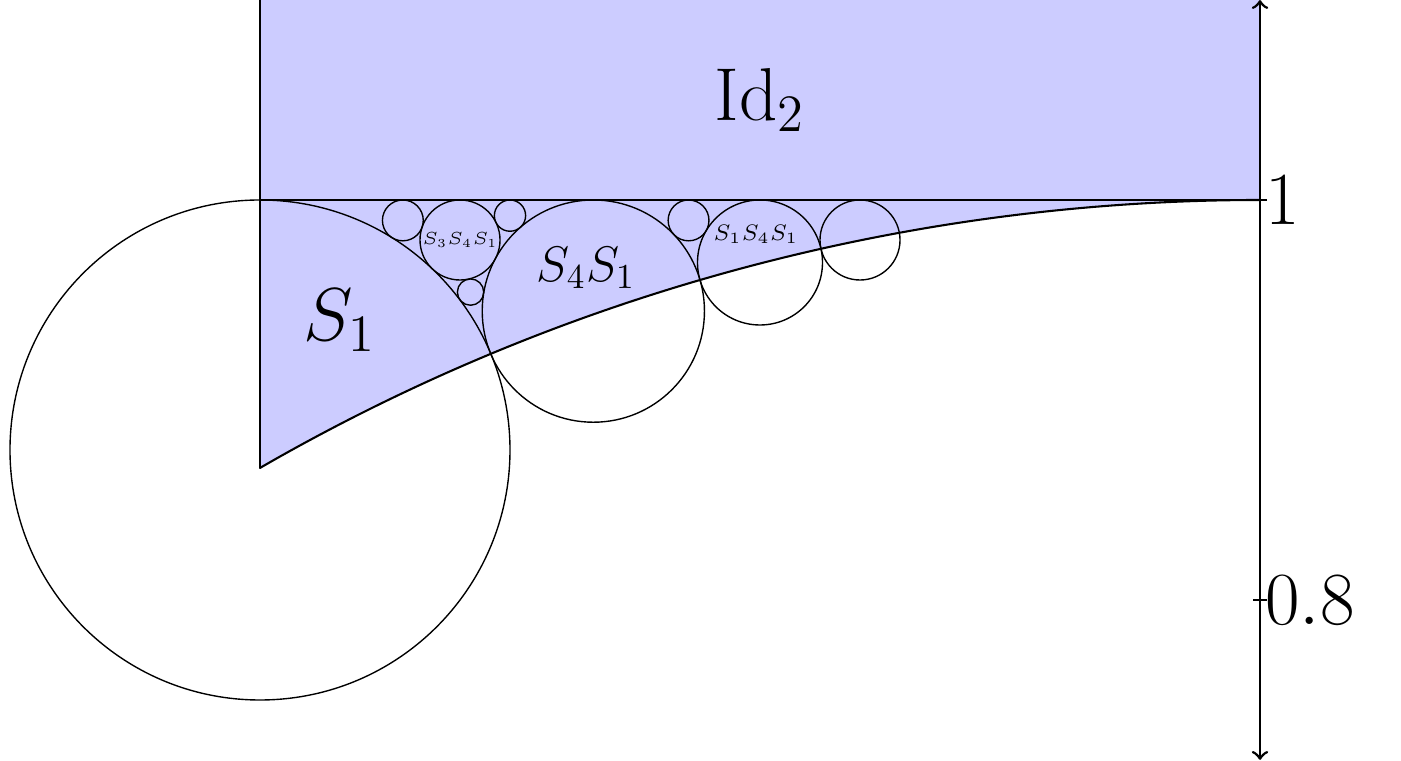}
	\caption{$D_4\bigcap\fdom$.}\label{fig:fdomD4}
\end{figure}

The structure of depth elements that intersect $\fdom$ is clear from Figure \ref{fig:fdomD4}. Define the sequence of depth elements $W_i$ by
\[W_0=\Id_2,\quad W_1=S_1,\quad W_2=S_4S_1,\quad W_3=S_1S_4S_1, \ldots,\]
so that $W_i$ is formed by alternately multiplying $\Id$ on the left by $S_1$ and then $S_4$. Then,
\begin{itemize}
\item $W_0$ cuts off the top part of $\fdom$;
\item $W_1$ cuts off the rest of the left side of $\fdom$;
\item $W_k$ for $k\geq 2$ cut off the rest of the bottom of $\fdom$;
\item All other $D_W$ that intersect $\fdom$ take the form $W=W'W_k$, where $k\geq 2$ and $W'\in\Ap$ is a reduced word ending in $S_3$. All such $W$ have $D_W$ lying entirely within $\fdom$.
\end{itemize}

The only claim that requires extra numerical justification is showing that the intersection point of $D_{W_k}$ with $D_{W_{k+1}}$ is on the unit circle for $k\geq 1$ (so that these circles carve out the bottom of $\fdom$).

\begin{lemma}\label{lem:wkcentre}
For $k\geq 1$, the intersection point of $D_{W_k}$ with $D_{W_{k+1}}$ lies on the unit circle.
\end{lemma}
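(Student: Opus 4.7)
The plan is to bypass any direct computation of coefficient quadruples by using Lemma \ref{lem:fourtangent} to pin down two extra tangencies shared by $D_{W_k}$ and $D_{W_{k+1}}$, which forces them into a one-parameter family of circles in which the unit-circle property becomes a short algebraic identity.

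First I would identify the four mutually tangent circles attached to the word $W_{k+1}$. Write $W_{k+1} = S_{j_1} W_k$ with $j_1 \in \{1,4\}$; since $W_{k+1}$ is reduced, $W_k$ begins with the other letter $S_{j_2}$ of $\{1,4\}$. The four conditions $(W_{k+1}\bol{q})_i \leq 0$ define mutually tangent circles by Lemma \ref{lem:fourtangent}. Because $S_1$ and $S_4$ fix the second and third coordinates of $\bol{q}$, the conditions at $i = 2, 3$ collapse to $b \leq 0$ and $c \leq 0$, giving $D_{\Id_2}$ (the half-plane $\{y \geq 1\}$) and $D_{\Id_3}$ (the disk at $(0,1/2)$ of radius $1/2$). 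The condition at $i = j_1$ is $D_{W_{k+1}}$, while the condition at $i = j_2$ reduces via $S_{j_1}$ fixing coordinate $j_2$ to $(W_k\bol{q})_{j_2} \leq 0$, namely $D_{W_k}$. So $D_{W_k}$ and $D_{W_{k+1}}$ are each tangent to $D_{\Id_2}$ and to $D_{\Id_3}$, and tangent to one another; the disjointness of interiors of distinct depth circles forces tangency to $y=1$ from below and external tangency to $D_{\Id_3}$.

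Next I would parametrize. Any disk lying below $y=1$ that is tangent to $y=1$ and externally tangent to $D_{\Id_3}$ has center $(x,1-r)$ with $r = x^2/2$; call it $C(x)$. Writing $D_{W_k} = C(x_1)$ and $D_{W_{k+1}} = C(x_2)$, external tangency between them reads
\[
(x_1 - x_2)^2 \;=\; 4 r_1 r_2 \;=\; x_1^2 x_2^2.
\]
The standard formula $(r_2 C_1 + r_1 C_2)/(r_1 + r_2)$ for the point of external tangency then gives
\[
(X, Y) \;=\; \left(\frac{x_1 x_2 (x_1 + x_2)}{x_1^2 + x_2^2},\ 1 - \frac{x_1^2 x_2^2}{x_1^2 + x_2^2}\right).
\]
Substituting $x_1^2 x_2^2 = (x_1 - x_2)^2$ yields $(x_1 + x_2)^2 + x_1^2 x_2^2 = 2(x_1^2 + x_2^2)$, and a direct expansion of $X^2 + Y^2$ collapses to $X^2 + Y^2 = 1$.

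The anticipated obstacle was the matrix induction needed to pin down the coefficient quadruple of $W_k$ along the alternating word and then to compute centers, radii, and tangent points directly. The geometric shortcut via Lemma \ref{lem:fourtangent} removes that step entirely, reducing the proof to identifying the two extra shared tangencies plus one line of algebra.
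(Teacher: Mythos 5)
Your proof is correct, and it takes a genuinely different route from the paper. The paper proceeds by brute force: it writes out $W_kS_{\theta}$, proves by induction that the coefficient quadruple is $(t_k,u_k,v_k,w_k)=(2(k+1)^2-1,(k+1)^2,-(k+1),(k+1)^2)$, extracts the explicit centre $\left(-\tfrac{1}{k+1},\tfrac{2(k+1)^2-1}{2(k+1)^2}\right)$ and radius $\tfrac{1}{2(k+1)^2}$, and then verifies that the explicit tangency point satisfies $X^2+Y^2=1$. You instead apply Lemma \ref{lem:fourtangent} to $W_{k+1}=S_{j_1}W_k$ and correctly identify the four mutually tangent circles as $D_{\Id_2}$, $D_{\Id_3}$, $D_{W_k}$, $D_{W_{k+1}}$ (the coordinate-fixing observations for $S_1,S_4$ all check out, as does the reduction of the $j_2$-condition to $D_{W_k}$), which places both circles in the one-parameter family $C(x)$ with $r=x^2/2$; the tangency condition $(x_1-x_2)^2=x_1^2x_2^2$ then makes $X^2+Y^2-1=\frac{P^2}{S^2}\left((x_1+x_2)^2+P^2-2S\right)$ vanish identically. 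Your argument is cleaner and actually proves a stronger statement (the tangency point of \emph{any} two externally tangent members of the family lies on the unit circle, a fact ultimately reflecting that inversion in the unit circle swaps $D_{\Id_2}$ and $D_{\Id_3}$). The one thing the paper's computation buys that yours does not is the explicit value $u_k/w_k=1$, which is reused verbatim in the proof of Lemma \ref{lem:Wk2}; if your proof replaced the paper's, that identity would need to be established separately there.
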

\begin{proof}
Let the coefficient quadruple of $W_k$ be $(t_k, u_k, v_k, w_k)$. If $k$ is odd, it follows that
\[W_kS_{\theta}=\lm{-t_k & u_k & v_k & w_k\\-1 & 1 & 0 & 0\\-1 & 0 & 0 & 1\\-t_{k-1} & u_{k-1} & w_{k-1} & v_{k-1}}.\]
If $k$ is even, then the top row has the indices $k-1$, and the bottom row has indices $k$.

We claim that
\[(t_k, u_k, v_k, w_k)=(2(k+1)^2-1, (k+1)^2, -(k+1), (k+1)^2).\]
This is true for $k=1$, and follows by induction from multiplying the expression for $W_kS_{\theta}$ on the left by either $S_1$ or $S_4$. In particular, by Equation \eqref{eqn:DPcircle},
\[\text{the circle $D_{W_k}$ has centre}\left(-\dfrac{1}{k+1}, \dfrac{2(k+1)^2-1}{2(k+1)^2}\right)\text{ and radius }\dfrac{1}{2(k+1)^2}.\]

The intersection point of $D_{W_k}$ with $D_{W_{k+1}}$ can be computed to be
\[\left(-\dfrac{2k+3}{2k^2+6k+5}, \dfrac{2k^2+6k+4}{2k^2+6k+5}\right),\]
which lies on the unit circle.
\end{proof}

We prove Theorem \ref{thm:depthprob} by considering the various cases, as described above the previous lemma.

\begin{lemma}\label{lem:Wk3}
Let $W=W'W_k$, where $k\geq 2$ and $W'\in\Ap$ is a reduced word ending in $S_3$. Then Theorem \ref{thm:depthprob} holds for $W$.
\end{lemma}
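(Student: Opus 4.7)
The approach reduces to two tasks once the last bullet of the classification preceding Lemma \ref{lem:wkcentre} is invoked, which asserts that $D_W\subseteq\fdom$. Granting this, $\mu(D_W\cap\fdom)=\mu(D_W)$, and since $p$ is sampled uniformly with respect to hyperbolic area and $\mu(\fdom)=\pi/6$, the sought probability is simply $d_W=6\mu(D_W)/\pi$, and the problem becomes a computation of $\mu(D_W)$.

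To carry out that computation, I would use Equation \eqref{eqn:DPcircle} together with Corollary \ref{cor:tuvweqn} to identify $D_W$ as a Euclidean disk with center $(v/w,t/(2w))$ and radius $1/(2w)$. Because $W\neq\Id_j$, the remark following Proposition \ref{prop:heightuniform} gives $t>1$, so $D_W$ stays away from the real axis and Lemma \ref{lem:circleharea} applies with $h=t/(2w)$ and $r=1/(2w)$ to yield
\[\mu(D_W)=2\pi\left(\frac{t/(2w)}{\sqrt{(t^2-1)/(4w^2)}}-1\right)=2\pi\left(\frac{t}{\sqrt{t^2-1}}-1\right).\]
Dividing by $\pi/6$ produces $d_W=12\bigl(t/\sqrt{t^2-1}-1\bigr)$, which is exactly the ``otherwise'' formula in Theorem \ref{thm:depthprob} with $a_W=12$. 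I would then verify that we really are in this branch of the theorem: the excluded words $\Id_2,S_1,(S_4S_1)^k,S_1(S_4S_1)^k$ are precisely the $W_j$ for $j\geq 0$, which have empty $W'$-prefix, whereas our $W'$ is nonempty and ends in $S_3$. The uniform distribution of $H(\bol{\alpha}(p))$ on $[0,t-\sqrt{t^2-1}]$ is then immediate from Proposition \ref{prop:heightuniform}, specialized to the hyperbolic-uniform measure on $D_W\subseteq\fdom$.

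I expect the main obstacle to be the very first step, namely the containment $D_W\subseteq\fdom$, which is made plausible but not fully proved by the itemized discussion before Lemma \ref{lem:wkcentre}. To make this rigorous, I would proceed by induction on $|W'|$. The base case $W'=S_3$, i.e.\ $W=S_3W_k$, is handled by explicit computation of the coefficient quadruple via the matrix $S_3\,W_kS_\theta$, using the recursive formulas from Lemma \ref{lem:wkcentre} to check that the resulting disk lies to the right of $x=-1/2$, to the left of $x=0$, and above the unit circle. The inductive step uses Lemma \ref{lem:fourtangent} to identify $D_W$ as the unique fourth circle of a tangent configuration whose three other members are depth circles already known to lie in (or bound) $\fdom$; the constraint that the extending letter be compatible with the ``ends in $S_3$'' condition is precisely what forces the new disk to be nested inside its parent $D_{W''}$ rather than escape through one of the vertical sides of $\fdom$.
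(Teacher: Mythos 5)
Your proposal is correct and follows essentially the same route as the paper: use $D_W\subseteq\fdom$ to reduce to computing $\mu(D_W)$, apply Lemma \ref{lem:circleharea} with $h=t/(2w)$ and $r=1/(2w)$ from Equation \eqref{eqn:DPcircle} to get $2\pi\bigl(t/\sqrt{t^2-1}-1\bigr)$, divide by $\mu(\fdom)=\pi/6$, and invoke Proposition \ref{prop:heightuniform} for the height distribution. The only difference is that you sketch an inductive justification of the containment $D_W\subseteq\fdom$, which the paper simply takes as read from the classification illustrated in Figure \ref{fig:fdomD4}; that is a reasonable addition of rigor but not a change of method.
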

\begin{proof}
Since $D_W$ lies entirely inside $\fdom$, the probability that a uniformly chosen point lands inside it is $\mu(D_W)/\mu(\fdom)$. Combining Equation \eqref{eqn:DPcircle} and Lemma \ref{lem:circleharea}, we compute
\[\mu(D_W)=2\pi\left(\dfrac{t/2w}{\sqrt{(t/2w)^2-(1/2w)^2}}-1\right)=2\pi\left(\dfrac{t}{\sqrt{t^2-1}}-1\right).\]
Dividing by $\mu(\fdom)=\pi/6$ gives the claimed probability. The distribution of $H(\bol{\alpha}(p))$ follows from Proposition \ref{prop:heightuniform}.
\end{proof}

The other easy case is for $W=W_0$.

\begin{lemma}\label{lem:Wk0}
Theorem \ref{thm:depthprob} holds for $W=W_0$.
\end{lemma}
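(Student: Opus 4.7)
The plan is to handle the $W_0=\Id_2$ case by direct calculation, since Proposition \ref{prop:heightuniform} does not apply (its hypothesis requires a genuine Euclidean circle, but $D_{\Id_2}$ is a half-plane, and the formula $a_W(t/\sqrt{t^2-1}-1)$ degenerates at $t=1$).

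First I would pin down the geometry. By Equation \eqref{eqn:depth0}, $D_{\Id_2}$ is the closed half-plane $\{y\geq 1\}$. The fundamental domain $\fdom$ is bounded by $x=-\tfrac12$, $x=0$, and $x^2+y^2=1$. Since $y\geq 1$ forces $x^2+y^2\geq 1$ automatically, we obtain
\[D_{W_0}\cap\fdom=\{(x,y)\in\uhp:-\tfrac12\leq x\leq 0,\ y\geq 1\}.\]

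Second, I would compute the probability by a direct hyperbolic area calculation:
\[\mu(D_{W_0}\cap\fdom)=\int_{-1/2}^{0}\int_{1}^{\infty}\frac{\diff y\,\diff x}{y^2}=\frac{1}{2}.\]
Dividing by $\mu(\fdom)=\pi/6$ yields $d_{W_0}=3/\pi$, which matches the statement.

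Third, I would verify the uniformity of $H(\bol{\alpha}(p))$ on $[0,t-\sqrt{t^2-1}]=[0,1]$ (using $t=1$). From the $W=\Id_2$ case already computed in Proposition \ref{prop:Hbound}, $H(\bol{\alpha}(p))=1-1/y$, which depends only on $y$. The marginal hyperbolic density on $y\in[1,\infty)$ is proportional to $\int_{-1/2}^{0}y^{-2}\,\diff x=\tfrac{1}{2}y^{-2}$, so after normalization the density of $y$ is $y^{-2}$ on $[1,\infty)$. The substitution $H=1-1/y$, $\diff H=y^{-2}\,\diff y$, transports this density to the constant $1$ on $[0,1)$, giving the claimed uniform distribution.

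The whole argument is routine; the only subtlety, and the reason this case is broken out from Lemma \ref{lem:Wk3}, is the degeneration at $t=1$, which requires replacing the circle-area computation of Lemma \ref{lem:circleharea} with the above elementary integral and replacing the appeal to Proposition \ref{prop:heightuniform} with the direct change of variables.
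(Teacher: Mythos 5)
Your proposal is correct and follows essentially the same route as the paper: both reduce to the identity $H(\bol{\alpha}(p))=1-1/y$ from Proposition \ref{prop:Hbound} and a direct integration of $\diff x\,\diff y/y^2$ over the region $-\tfrac12\leq x\leq 0$, $y\geq 1$ (the paper phrases the uniformity via the measure of the super-level set $\{H\geq 1-\epsilon\}$ being $\epsilon/2$, which is the same computation as your change of variables). No gaps.
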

\begin{proof}
We need to show that $\mu(D_{W_0}\bigcap \fdom)=\frac{1}{2}$ and the values of $H(\bol{\alpha}(p))$ are uniform in $[0, 1]$ with respect to the hyperbolic metric on $D_{W_0}\bigcap \fdom$. Write $p=x+iy$ for $-\frac{1}{2}\leq x\leq 0$ and $y\geq 1$, and from Proposition \ref{prop:Hbound}, we have $H(\bol{\alpha}(p))=1-\frac{1}{y}$. Thus,
\[H(\bol{\alpha}(p))\geq 1-\epsilon \Leftrightarrow y\geq \dfrac{1}{\epsilon}.\]
The hyperbolic area of this region is
\[\int_{-1/2}^{0}\int_{1/\epsilon}^{\infty}\dfrac{1}{y^2}\,dy\, dx=\dfrac{\epsilon}{2}.\]
This grows linearly with $\epsilon$, which implies the uniform distribution. Taking $\epsilon=1$ gives the claimed hyperbolic area of the whole region.
\end{proof}

To work with $W_k$ for $k\geq 1$, we show that $C_W^{\epsilon}$ is divided into a number of equal parts, and thus we can still use Proposition \ref{prop:heightuniform}. Before doing $k\geq 2$, we need a lemma about M\"{o}bius maps and circles.

\begin{lemma}\label{lem:SUaction}
Let $C$ be a circle in $\CC$ with centre $p$ and radius $r$, which does not contain the origin. Let $C'$ be the image of the circle under the M\"{o}bius transformation $SU$, where $S$ and $U$ are as in Proposition \ref{prop:ap1orbit}. Write $|p|=d$, and let $C'$ have centre $p'$ and radius $r'$. Then
\[\arg(p')=\arg(p), \qquad |p'|=\dfrac{d}{d^2-r^2}, \qquad r'=\dfrac{r}{d^2-r^2}.\]
\end{lemma}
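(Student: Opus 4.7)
The plan is to recognize $SU$ as a familiar geometric transformation of $\CC$ and then reduce the lemma to a standard inversion calculation.

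First I would compute the action of $SU$ on $\CC$ explicitly. Since $\det(U)=-1$, the paper's convention gives $Uz=-\overline{z}$; since $\det(S)=1$, we have $Sw=-1/w$. Composing,
\[
SUz = S(-\overline{z}) = \dfrac{-1}{-\overline{z}} = \dfrac{1}{\overline{z}}.
\]
Writing $z=se^{i\theta}$ yields $SUz=(1/s)e^{i\theta}$, so $SU$ is inversion through the unit circle (it fixes every ray from the origin and inverts moduli). This is the one identification the argument depends on.

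Next I would use the hypothesis $d>r$ (the circle $C$ does not contain the origin) to locate $C'$ geometrically. The two points of $C$ lying on the ray from the origin through $p$ are the nearest and farthest points of $C$ to the origin; they have argument $\arg(p)$ and moduli $d-r$ and $d+r$ respectively. Under $SU$, these map to two points on the same ray with moduli $1/(d-r)$ and $1/(d+r)$. Because inversion through the unit circle sends circles missing the origin to circles, and this transformation preserves each ray from $0$, the segment between these two image points must be a diameter of $C'$. Consequently $\arg(p')=\arg(p)$.

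The remaining calculation is now purely arithmetic: the center $p'$ is the midpoint of the image diameter, so
\[
|p'| = \tfrac{1}{2}\!\left(\dfrac{1}{d-r}+\dfrac{1}{d+r}\right) = \dfrac{d}{d^2-r^2},
\]
and the radius is half the length of the diameter,
\[
r' = \tfrac{1}{2}\!\left(\dfrac{1}{d-r}-\dfrac{1}{d+r}\right) = \dfrac{r}{d^2-r^2}.
\]
This gives all three claimed formulas. The only real pitfall is sign-bookkeeping when deriving $SUz=1/\overline{z}$ from the orientation-reversing convention for $\PGL(2,\ZZ)$; once that is correct, the rest is the classical formula for image circles under inversion and presents no obstacle.
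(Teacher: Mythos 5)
Your proof is correct and follows essentially the same route as the paper: identify $SU$ as $z\mapsto 1/\overline{z}$ (inversion in the unit circle, fixing each ray from the origin), observe that it exchanges the nearest and farthest points of $C$ from the origin with the farthest and nearest points of $C'$, and read off the centre and radius of $C'$ as the midpoint and half-length of the resulting diameter. The arithmetic and the sign-bookkeeping for the $\det=-1$ convention are both handled correctly.
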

\begin{proof}
The action of $SU$ on a point is via
\[re^{i\theta}\rightarrow \dfrac{1}{r}e^{i\theta}.\]
In particular, if $f(C)$ is the furthest point from the origin on $C$ and $c(C)$ is the closest point to the origin on $C$, then $SU$ swaps $f(C)$ and $c(C')$, as well as $c(C)$ and $f(C')$. The centre of $C'$ is the midpoint of $c(C')$ and $f(C')$, and the radius is half of the distance between these points. A direct computation finishes the claim.
\end{proof}

\begin{lemma}\label{lem:Wk2}
Theorem \ref{thm:depthprob} holds for $W=W_k$ when $k\geq 2$.
\end{lemma}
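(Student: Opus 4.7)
The plan is to exploit a self-map of $D_{W_k}$ under $\PGL(2,\ZZ)$ to cut the depth circle in half, and to show that the half inside $\fdom$ accounts for exactly half of its hyperbolic area. The key observation is that the element $SU \in \PGL(2, \ZZ)$ acts by inversion through the unit circle (preserving argument, by Lemma \ref{lem:SUaction}), and I will show it sends $D_{W_k}$ to itself.

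First, I would use the explicit data from the proof of Lemma \ref{lem:wkcentre}, which gives the centre of $D_{W_k}$ at $\bigl(-\tfrac{1}{k+1},\, 1-\tfrac{1}{2(k+1)^2}\bigr)$ and radius $\tfrac{1}{2(k+1)^2}$. A direct computation shows that for this circle, $d_k^2 - r_k^2 = 1$, where $d_k$ is the distance from the centre to the origin. By Lemma \ref{lem:SUaction}, this means $SU$ fixes the centre's magnitude and the radius, i.e.\ $SU(D_{W_k}) = D_{W_k}$. Since $SU$ swaps the inside and outside of the unit circle, it swaps the two arcs of $D_{W_k}$ cut out by the unit circle.

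Next, I would verify that for $k \geq 2$, $D_{W_k}$ is contained strictly between $x=-\tfrac12$ and $x=0$ (the inequality $-(2k+3)/(2(k+1)^2) > -1/2$ reduces to $k^2 > 2$), so the only part of $D_{W_k}$ escaping $\fdom$ is the arc lying below the unit circle. Combined with the previous step, $SU$ is a hyperbolic isometry mapping $D_{W_k} \cap \fdom$ onto its complement in $D_{W_k}$, giving $\mu(D_{W_k} \cap \fdom) = \tfrac12 \mu(D_{W_k}) = \pi(t/\sqrt{t^2-1}-1)$ by Lemma \ref{lem:circleharea}. Dividing by $\mu(\fdom) = \pi/6$ yields the probability $6(t/\sqrt{t^2-1}-1)$, consistent with $a_{W_k} = 6$.

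For the uniform distribution of heights, I would note that $SU \in \PGL(2, \ZZ)$ carries any BQF quadruple with principal root in $D_{W_k}$ to an equivalent one, hence (via Proposition \ref{prop:ap1orbit}) to a Descartes quadruple in the same $\Ap_1$-class, with the same height $H$. Therefore $H$ is $SU$-invariant on $D_{W_k}$. Proposition \ref{prop:heightuniform} gives that $H$ is uniform on the full $D_{W_k}$ with respect to hyperbolic measure, and since the two halves have equal hyperbolic measure and are swapped by the $H$-preserving isometry $SU$, the pushforward of the normalized restriction to either half agrees with the pushforward of the whole, which is uniform on $[0, t-\sqrt{t^2-1}]$. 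The main subtlety is ensuring that $SU$ really does preserve the height function, which comes down to the fact that $SU$-equivalent principal roots correspond to the same $n$-quadruple class, and hence the same $\MC$ value.
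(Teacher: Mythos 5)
Your proof is correct and follows essentially the same route as the paper: both exploit the inversion $SU$ in the unit circle, showing it preserves $D_{W_k}$ (via $d^2-r^2=1$) and swaps the piece inside $\fdom$ with the piece below the unit circle, so that $\mu(D_{W_k}\cap\fdom)=\tfrac{1}{2}\mu(D_{W_k})$. The only (minor) variation is in the height step: the paper verifies computationally that $SU$ preserves every $\epsilon$-circle $C_{W_k}^{\epsilon}$, whereas you observe more conceptually that $H$ is automatically invariant under $\PGL(2,\ZZ)$ because equivalent principal roots correspond to $\Ap_1$-equivalent quadruples; both arguments are valid, and your explicit check that $D_{W_k}$ lies strictly between $x=-\tfrac{1}{2}$ and $x=0$ is a detail the paper leaves implicit.
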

\begin{proof}
The unit circle splits $D_{W_k}$ into two pieces: call the upper piece $R_1$, and the lower $R_2$. See Figure \ref{fig:W1decomp} for the picture when $k=2$.

\begin{figure}[htb]
	\includegraphics[scale=1]{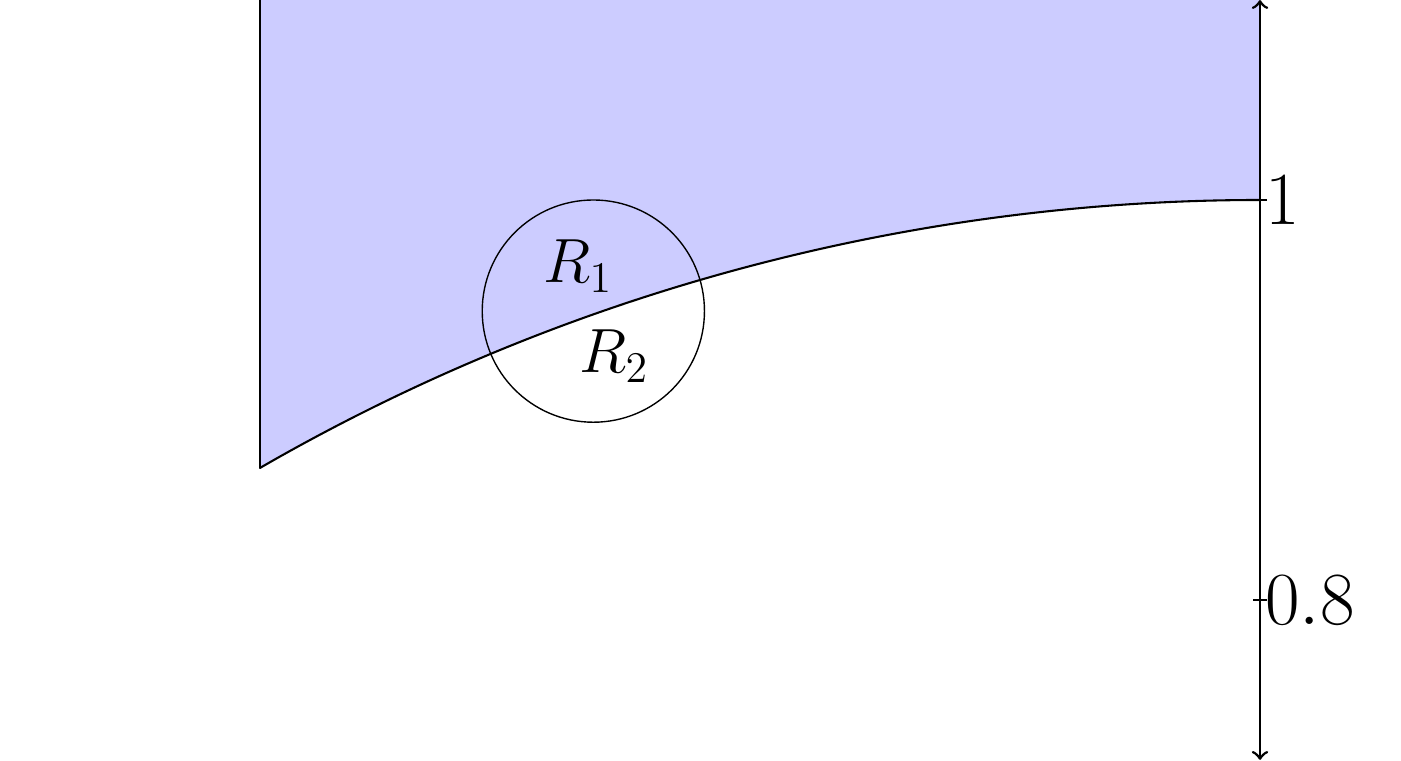}
	\caption{The two regions of $D_{W_2}$.}\label{fig:W1decomp}
\end{figure}

We claim that the M\"{o}bius map $SU$ swaps $R_1$ and $R_2$ and preserves $C_{W_k}^{\epsilon}$. If this holds, it will swap $R_1\bigcap C_{W_k}^{\epsilon}$ and $R_2\bigcap C_{W_k}^{\epsilon}$, hence the height distribution follows from Proposition \ref{prop:heightuniform}. The final hyperbolic area will be half of $\mu(D_{W_k})$, which was computed in Lemma \ref{lem:Wk3}.

Since $SU$ preserves the unit circle, sending the inside to the outside, $R_1$ and $R_2$ swap. The explicit expression for $C_{W_k}^{\epsilon}$ is given in Proposition \ref{prop:heightuniform}, and adopting the notation of Lemma \ref{lem:SUaction}, we have
\[d^2=\left(\dfrac{v_k}{w_k}\right)^2+\left(\dfrac{\sqrt{t_k^2-1}+\epsilon}{2w_k}\right)^2=\dfrac{4u_kw_k+\epsilon^2+2\epsilon\sqrt{t_k^2-1}}{4w_k^2},\]
where we used Corollary \ref{cor:tuvweqn} to simplify. The radius is given by
\[r^2=\dfrac{\epsilon^2+2\epsilon\sqrt{t_k^2-1}}{4w_k^2}.\]
Hence
\[d^2-r^2=\dfrac{u_k}{w_k}=1,\]
by the computation in Lemma \ref{lem:wkcentre}. Finally, Lemma \ref{lem:SUaction} shows that the centre and radius are unchanged, hence the circle is preserved.
\end{proof}

The last case is $W=W_1$.

\begin{lemma}\label{lem:Wk1}
Theorem \ref{thm:depthprob} holds for $W=W_1$.
\end{lemma}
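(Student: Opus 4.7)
The plan is to exploit a happy coincidence: $D_{W_1}=D_{S_1}$ and every $\epsilon$-circle $C_{S_1}^{\epsilon}$ are hyperbolic disks centered at the vertex $v_0:=(-1/2,\sqrt{3}/2)$ of $\fdom$, and $\fdom$ has interior angle exactly $\pi/3$ at $v_0$. By Lemma \ref{lem:wkcentre} with $k=1$, the coefficient quadruple of $W_1=S_1$ is $(7,4,-2,4)$, so Equation \eqref{eqn:DPcircle} presents $D_{S_1}$ as the Euclidean disk with center $(-1/2,7/8)$ and radius $1/8$. A Euclidean disk $(x-a)^2+(y-b)^2\leq r^2$ with $b>r$ is also a hyperbolic disk, with hyperbolic center $(a,\sqrt{b^2-r^2})$ (apply the midpoint-of-chord calculation along the geodesic $x=a$), so the hyperbolic center of $D_{S_1}$ is $(-1/2,\sqrt{48}/8)=v_0$. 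Applying the same formula to the Euclidean data of $C_W^{\epsilon}$ from Equation \eqref{eqn:Cepsilon} shows the hyperbolic center of $C_W^{\epsilon}$ is $(v/w,\sqrt{t^2-1}/(2w))$, independent of $\epsilon$; for $W=S_1$ this is again $v_0$.

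Next, $v_0$ is the intersection of the two geodesic sides $x=-\tfrac{1}{2}$ and $x^2+y^2=1$ of $\fdom$. A tangent computation at $v_0$ (the tangent to $x^2+y^2=1$ at $v_0$ makes angle $\pi/3$ with the vertical) gives interior angle $\pi/3$ for $\fdom$ there, so a neighborhood of $v_0$ in $\fdom$ is a hyperbolic $\pi/3$-sector based at $v_0$. The Euclidean bounding box $[-5/8,-3/8]\times[3/4,1]$ for $D_{S_1}$ stays well clear of the third side $x=0$ of $\fdom$, so the only $\fdom$-boundaries that $D_{S_1}$ can cross are the two geodesics through $v_0$. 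Hence $D_{S_1}\cap\fdom$ is exactly a $\pi/3$-sector of the hyperbolic disk $D_{S_1}$ about its hyperbolic center, with hyperbolic area $\tfrac{1}{6}\mu(D_{S_1})$. Combining with $\mu(D_{S_1})=2\pi(t/\sqrt{t^2-1}-1)$ from Lemma \ref{lem:circleharea} and $\mu(\fdom)=\pi/6$ gives $d_{W_1}=2(t/\sqrt{t^2-1}-1)$, i.e.\ $a_{W_1}=2$.

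The same sector argument applies to every $C_{S_1}^{\epsilon}\subseteq D_{S_1}$: each shares hyperbolic center $v_0$ and lies in the same local picture, so $\mu(C_{S_1}^{\epsilon}\cap\fdom)=\tfrac{1}{6}\mu(C_{S_1}^{\epsilon})$ grows linearly in $\epsilon$, and the argument of Proposition \ref{prop:heightuniform} then gives the uniform distribution of $H(\bol{\alpha}(p))$ on $[0,t-\sqrt{t^2-1}]$. The one thing demanding care is the geometric containment check in step two -- that $D_{S_1}$ truly fits within the local sector picture at $v_0$ and does not wrap around into other $\PGL(2,\ZZ)$-translates of $\fdom$ -- but the explicit Euclidean bounding box makes this immediate.
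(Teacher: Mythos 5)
Your proof is correct, and it takes a genuinely different route from the paper's. The paper splits $D_{W_1}$ into six explicit pieces using the unit circle, the circle $(x+1)^2+y^2=1$, and the line $x=-1/2$, and then exhibits the M\"obius maps $T^{-1}U$ and $T^{-1}S$ which permute the six pieces while fixing each $C_{W_1}^{\epsilon}$ --- in effect invoking the order-$6$ stabilizer of $\rho=e^{2\pi i/3}$ in $\PGL(2,\ZZ)$. You instead observe that a Euclidean disk in $\uhp$ is a hyperbolic metric ball, compute that $D_{S_1}$ and every $C_{S_1}^{\epsilon}$ are \emph{concentric} hyperbolic disks with common hyperbolic center $\rho=(-1/2,\sqrt{3}/2)$ (your computation $b^2-r^2=(t^2-1)/4w^2$, independent of $\epsilon$, is right), and that the two sides of $\fdom$ meeting at $\rho$ are geodesics through that center cutting out a sector of angle $\pi/3$. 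Since rotations about the hyperbolic center are isometries preserving the ball, the sector carries exactly $1/6$ of the area of $D_{S_1}$ and of each $C_{S_1}^{\epsilon}$, giving $a_{W_1}=2$ and the uniform height distribution in one stroke. The key containment step is sound: $D_{S_1}\subseteq\{x\leq -3/8\}$ forces $D_{S_1}\cap\fdom=D_{S_1}\cap\{x\geq -1/2\}\cap\{|z|\geq 1\}$, and the intersection of those two half-planes is precisely the $\pi/3$-wedge at $\rho$, so no ``wrap-around'' issue can arise. Your approach buys a cleaner conceptual explanation of the constants $a_W$ (they record the angle of $\fdom$ at the hyperbolic center of $D_W$) and would also streamline Lemma \ref{lem:Wk2}: the same computation shows the hyperbolic center of $D_{W_k}$, $k\geq 2$, lies \emph{on} the unit circle, which therefore bisects it. The paper's approach is more self-contained in that it never needs the identification of Euclidean disks with hyperbolic balls, relying only on Proposition \ref{prop:heightuniform} plus explicit group elements.
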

\begin{proof}
Similarly to Lemma \ref{lem:Wk2}, it suffices to split $D_{W_1}$ into six pieces, and show that there are M\"{o}buis transformations that permute all six pieces while fixing $C_{W_1}^{\epsilon}$. The decomposition is provided by the unit circle ($C_1$), the circle $C_2:(x+1)^2+y^2=1$, and the line $C_3:x=-1/2$; see Figure \ref{fig:W0decomp} for the labeling of the six regions.

\begin{figure}[htb]
	\includegraphics[scale=0.7]{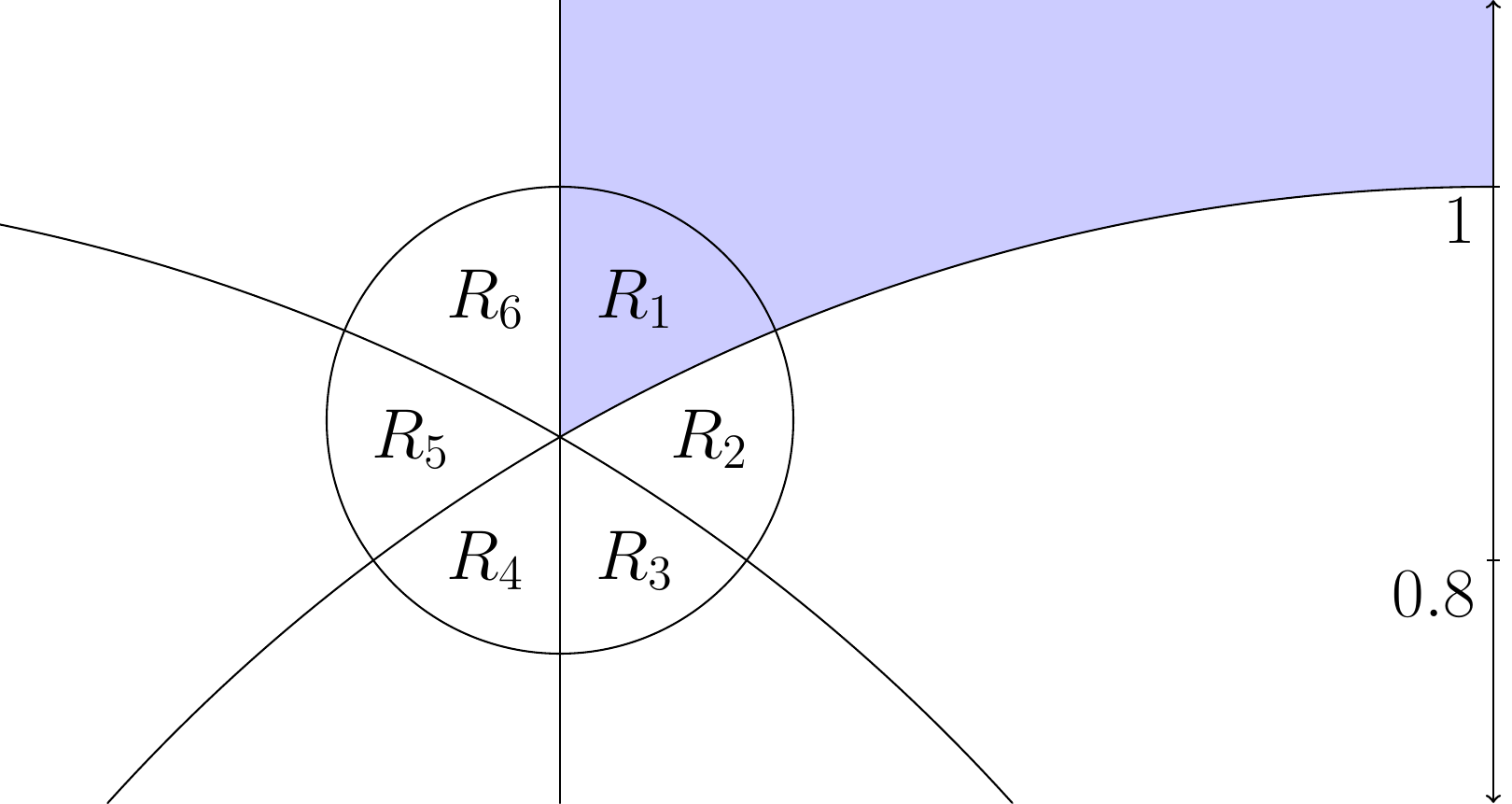}
	\caption{The six regions of $D_{W_1}$.}\label{fig:W0decomp}
\end{figure}

We are working with the coefficient quadruple $(t, u, v, w)=(7, 4, -2, 4)$, so the centre and radius of $C_{W_1}^{\epsilon}$ are given by
\[p=\dfrac{-1}{2}+\dfrac{\sqrt{48}+\epsilon}{8}i, \qquad r^2=\dfrac{\epsilon^2+2\epsilon\sqrt{48}}{8}.\]
Start with the M\"{o}bius transformation $T^{-1}U$, which corresponds to a reflection across the line $x=-1/2$. It is clear that this swaps regions $R_1$ and $R_6$, $R_2$ and $R_5$, $R_3$ and $R_4$, as well as preserving $C_{W_1}^{\epsilon}$.

Next, consider $T^{-1}S$, which sends $C_1\rightarrow C_2\rightarrow C_3\rightarrow C_1$. Furthermore, it also permutes the regions by $R_1\rightarrow R_5\rightarrow R_3\rightarrow R_1$ and $R_2\rightarrow R_6\rightarrow R_4\rightarrow R_2$. If it preserves $C_{W_1}^{\epsilon}$, we will be done, since we can combine $T^{-1}S$ and $T^{-1}U$ in an appropriate way to preserve $C_{W_1}^{\epsilon}$ and send $R_k$ to $R_1$ for all $1\leq k\leq 6$. 

Since $T^{-1}U$ fixes $C_{W_1}^{\epsilon}$, it suffices to show that $(T^{-1}S)^{-1}(T^{-1}U)=S^{-1}U=SU$ preserves $C_{W_1}^{\epsilon}$. This was done in Lemma \ref{lem:Wk2} for $C_{W_k}^{\epsilon}$ with $k\geq 2$, and the proof still works when $k=1$.
\end{proof}

Combining Lemmas \ref{lem:Wk3}, \ref{lem:Wk0}, \ref{lem:Wk2}, and \ref{lem:Wk1} completes the proof of Theorem \ref{thm:depthprob}.

\section{Integral packings and spikes}\label{sec:spikes}

To specialize our results to integral packings, let $n$ be a positive integer, and consider choosing a random $\bol{q}\in\ID(n)$, i.e. an $\Ap_1-$orbit of a primitive integral Descartes quadruple starting with curvature $n$. As shown in \cite{GLMWY02}, this set has size $h^{\pm}(-4n^2)$, the number of $\PGL(2, \ZZ)-$equivalence classes of PDBQFs with discriminant $-4n^2$. This fact can also be deduced from Proposition \ref{prop:ap1orbit}.

Take $S_n:=\{p_{\bol{q}}:\bol{q}\in \ID(n)\}$ to be the set of all principal roots of elements of $\ID(n)$, considered as a subset of the fundamental domain $\fdom$. A classic theorem of Duke (\cite{Duke88}) says that these points equidistribute as $n\rightarrow\infty$. In particular, we can apply Theorem \ref{thm:depthprob}!

\begin{theorem}\label{thm:integraldepthprob}
Let $n$ be a positive integer, let $W$ be a depth element, and take $d_W$ and $t$ as in Theorem \ref{thm:depthprob}. Then as $n\rightarrow\infty$, the probability that $W$ is a depth element for a randomly chosen element of $\ID(n)$ tends to $d_W$. Furthermore, the heights of such elements tend to a uniform distribution on $[0, t-\sqrt{t^2-1}]$.
\end{theorem}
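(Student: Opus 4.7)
The plan is to transfer the question from integral Descartes quadruples to points in the fundamental domain $\fdom$, invoke Duke's equidistribution theorem, and then apply Theorem \ref{thm:depthprob}. By Proposition \ref{prop:ap1orbit} combined with Lemma \ref{lem:apolquadprojbiject}, the map $\bol{q}\mapsto p_{\bol{q}}$ gives a bijection between $\ID(n)$ and a finite subset $S_n\subseteq\fdom$, namely the unique $\fdom$-representatives of the $\PGL(2,\ZZ)$-orbits of principal roots of primitive integral BQF quadruples $[n,A,B,C]$ of discriminant $-4n^2$. Duke's theorem \cite{Duke88} tells us that $S_n$ equidistributes in $\fdom$ with respect to the (normalized) hyperbolic measure as $n\to\infty$; more precisely, for every continuous compactly supported $f\colon\fdom\to\RR$, $\tfrac{1}{|S_n|}\sum_{p\in S_n} f(p)\to \tfrac{1}{\mu(\fdom)}\int_{\fdom} f\,d\mu$.

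The first half of the statement follows by applying this to the indicator function of $D_W\cap\fdom$. The sets $D_W\cap\fdom$ (either whole circular discs or the explicit regions cut off by the boundary of $\fdom$, as enumerated before Lemma \ref{lem:wkcentre}) have piecewise-smooth boundaries of hyperbolic measure zero, so by sandwiching $\mathbf{1}_{D_W\cap\fdom}$ between continuous functions of arbitrarily close hyperbolic integrals and applying equidistribution to each, the proportion of $p\in S_n$ with $p\in D_W\cap\fdom$ converges to $\mu(D_W\cap\fdom)/\mu(\fdom)$, which is precisely $d_W$ as computed in Lemmas \ref{lem:Wk3}, \ref{lem:Wk0}, \ref{lem:Wk2}, \ref{lem:Wk1}. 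A mild subtlety, handled by the same approximation argument, is that depth elements $W$ may have $D_W$ touching the boundary of another $D_{W'}$; the overlap has measure zero, so equidistribution gives the clean limiting probability $d_W$ regardless.

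For the height statement, fix $W$ with $\mu(D_W\cap\fdom)>0$ and $\epsilon\in[0,t-\sqrt{t^2-1}]$. From Equation \eqref{eqn:Hofq}, the set $E_\epsilon:=\{p\in D_W\cap\fdom : H(\bol{\alpha}(p))\geq t-\sqrt{t^2-1}-\epsilon\}$ is cut out by the $\epsilon$-circle $C_W^\epsilon$ intersected with $\fdom$; this again has piecewise-smooth boundary, so equidistribution applied to $\mathbf{1}_{E_\epsilon}$ yields
\[
\lim_{n\to\infty}\frac{\#\{p\in S_n : p\in E_\epsilon\}}{\#\{p\in S_n : p\in D_W\cap\fdom\}}=\frac{\mu(E_\epsilon)}{\mu(D_W\cap\fdom)}.
\]
By Proposition \ref{prop:heightuniform} (together with the explicit symmetry arguments of Lemmas \ref{lem:Wk0}, \ref{lem:Wk2}, \ref{lem:Wk1} for the boundary cases $W=W_k$), this ratio equals $\epsilon/(t-\sqrt{t^2-1})$, the uniform tail on $[0,t-\sqrt{t^2-1}]$. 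Since this holds for every $\epsilon$ in the interval, the empirical distribution of heights among $\bol{q}\in\ID(n)$ with depth element $W$ converges weakly to the uniform distribution on $[0,t-\sqrt{t^2-1}]$.

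The main obstacle is ensuring Duke's theorem is genuinely applicable to the principal roots of our specific family of primitive integral PSDBQFs of discriminant $-4n^2$, and that the equidistribution survives the passage from continuous test functions to indicator functions of the regions $D_W\cap\fdom$ and $E_\epsilon$. The first point is precisely Duke's setting once we note (via Proposition \ref{prop:quadqfbijection}) that $\ID(n)$ is in bijection with the $\PGL(2,\ZZ)$-classes of primitive PDBQFs of discriminant $-4n^2$; the second is a standard approximation, for which one should briefly note that the relevant boundaries are finite unions of geodesic arcs and Euclidean circular arcs, hence have hyperbolic measure zero.
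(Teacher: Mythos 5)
Your proposal is correct and follows exactly the route the paper takes: the paper's entire argument for this theorem is the observation that $S_n$ equidistributes in $\fdom$ by Duke's theorem, after which Theorem \ref{thm:depthprob} supplies the limiting measures of $D_W\cap\fdom$ and of the $\epsilon$-sublevel sets of $H$. You simply spell out the standard details (the bijection $\ID(n)\leftrightarrow S_n$ and the approximation of indicator functions of sets with null boundary) that the paper leaves implicit.
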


\subsection{A precise description of the Apollonian staircase}\label{sec:staircase}

Theorem \ref{thm:integraldepthprob} immediately tells us how to describe the Apollonian staircase, as depicted in Figures \ref{fig:RMC1} and \ref{fig:RMC2}, hence proving Theorem \ref{thm:getstairs}. For each depth element $W$ which intersects $\fdom$, let $(t, u, v, w)$ be the corresponding coefficient triple. Then $W$ contributes a single ``step'' from $0$ to $t-\sqrt{t^2-1}$ with height $\frac{d_W}{t-\sqrt{t^2-1}}$, where $d_W$ is as in Theorem \ref{thm:depthprob}. As long as $W\neq \Id_2$, this is given by 
\[\frac{a_W}{\sqrt{t^2-1}}.\]
To construct the staircase, order the depth elements by $t$, and stack the stairs on top of each other, one depth element at at time.

Explicitly, the first 6 stairs (to 10 decimal places) are given in Table \ref{table:first6stairs}. Note that the last two stairs have the same value of $t$, and combine to give a ``super-stair''.
\begin{table}[hbt]
\centering
\caption{The first 6 stairs.}\label{table:first6stairs}
\begin{tabular}{|c|c|c|c|} 
\hline
$W$            & $t$ & $t-\sqrt{t^2-1}$ & Height \\ \hline
$\Id_2$        &  1 &                 1 & 0.9549296586 \\ \hline
$S_1$          &  7 &      0.0717967697 & 0.2886751346 \\ \hline
$S_4S_1$       & 17 &      0.0294372515 & 0.3535533906 \\ \hline
$S_1S_4S_1$    & 31 &      0.0161332303 & 0.1936491673 \\ \hline
$S_3S_4S_1$    & 49 &      0.0102051443 & 0.2449489743 \\ \hline
$S_4S_1S_4S_1$ & 49 &      0.0102051443 & 0.1224744871 \\ \hline
\end{tabular}
\end{table}

While a general formula for the stairs does not seem plausible, this process allows one to exactly compute any given stair. Note that contributions to the bottom stair ($W=\Id_2$) are from circles that are part of a Descartes quadruple containing the minimal curvature in the packing. In other words, they are precisely the circles that are tangent to the outermost circle. This proves Corollary \ref{cor:probtangent}.

\subsection{Spikes}

Most of the results so far apply equally to integral Descartes quadruples as non-integral quadruples. On the other hand, the occurrence of spikes, as seen in Figure \ref{fig:RMC2}, is something specific to the integral case. The heights of the spikes relative to the bottom stair height depends on bin size, and is thus a bit artificial. In particular, we will only talk about the approximate heights of the spikes, as opposed to a precise description.

\begin{definition}
Let $c_1, c_2$ be integers. The tangency number of $c_1, c_2$, denoted $T(c_1, c_2)$, is equal to the number of primitive integral $c_1-$quadruple classes that contain a quadruple with $c_2$ as a curvature. 
\end{definition}

Essentially, $T(c_1, c_2)$ is equal to the number of primitive integral Apollonian circle packings that contain circles of curvatures $c_1$ and $c_2$ that are tangent. 

\begin{definition}
Let $n$ be a positive integer, and let $\RMC_0(n)$ denote the multiset of ratios of minimal curvatures to $n$, where we only count the bottom stair of the Apollonian staircase. In other words,
\[\RMC_0(n):=\{\MC(\bol{q})/n:\bol{q}\in\ID(n)\text{ has depth element $\Id_2$}\}.\]
\end{definition}

For each integer $0\leq c<n$, the multiplicity of $c/n$ in $\RMC_0(n)$ is $T(n, -c)$. When creating the histogram for $\RMC(n)$, we group together points in small ranges, and add up the corresponding multiplicities. Spikes will occur when a certain value of $c$ has $T(n, -c)$ differing greatly from its ``expected value'', i.e. when there is a large variation in the values of $T(n, -c)$ on the given range. Smaller bin sizes will accentuate the appearance of spikes, whereas larger bin sizes will start to wash away their effect.

To study the expected value, we go back to $c_1$ and $c_2$, where we can assume that $c_1+c_2>0$. Each quadruple counted in $T(c_1, c_2)$ corresponds to a quadruple $(c_1, c_2, a, b)$, which is unique up to the action by words in $S_4,P_{(34)}$, i.e.
\[(c_1, c_2, a, b)\sim (c_1, c_2, a, 2c_1+2c_2+2a-b)\sim (c_1, c_2, b, a).\]
Using the bijection $\phi$, this corresponds to the BQF quadruple equivalence
\[[c_1, c_1+c_2, c_1+c_2+a-b, c_1+a]\sim [c_1, c_1+c_2, -c_1-c_2-a+b, c_1+a]\sim [c_1, c_1+c_2, c_1+c_2+b-a, c_1+b].\]
Write $[A, B, C]=[c_1+c_2, c_1+c_2+a-b, c_1+a]$, which is a primitive integral binary quadratic form of discriminant $-4c_1^2$. The equivalence is thus
\[[A, B, C]\sim [A, -B, C]\sim [A, 2A-B, A+C-B],\]
which gives the orbit of the group $\sm{1 & x\\ 0 &\pm 1}$ for $x\in\ZZ$. There is a unique representative for each orbit with $0\leq B\leq A$, which proves the following lemma.

\begin{lemma}\label{lem:Tc1c2basic}
Let $c_1+c_2=A>0$. Then $T(c_1, c_2)$ is equal to the number of integral solutions to $B^2-4AC=-4c_1^2$ with $\gcd(A, B, C)=1$ and $0\leq B\leq A$.
\end{lemma}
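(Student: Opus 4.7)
The plan is to complete the setup sketched in the paragraph immediately preceding the lemma. The argument reduces to three verifications: (i) the action of the $\Ap_1$-stabilizer of the first two coordinates corresponds, via $\phi$, to a specific subgroup of $\PGL(2,\ZZ)$; (ii) every orbit under that subgroup has a unique representative with $0\leq B\leq A$; and (iii) $\gcd(A,B,C)=1$ is equivalent to primitivity of the underlying Descartes quadruple.

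For (i), I would first observe that the stabilizer of positions $1$ and $2$ in $\Ap_1$ is generated by $S_4$ (which only modifies position $4$) and $P_{(34)}$: indeed, $P_{(23)}$ and $P_{(24)}$ generate the symmetric group on $\{2,3,4\}$, whose stabilizer of the element $2$ is $\{e,(34)\}$. On $(c_1,c_2,a,b)$ these act by $b\mapsto 2(c_1+c_2+a)-b$ and by $a\leftrightarrow b$ respectively. Transporting through $\phi$ with $A=c_1+c_2$, $B=c_1+c_2+a-b$, $C=c_1+a$, a direct calculation gives
\[S_4:[A,B,C]\mapsto [A,-B,C], \qquad P_{(34)}:[A,B,C]\mapsto [A,2A-B,A+C-B],\]
which coincide with the $\PGL(2,\ZZ)$-actions of $U=\sm{1 & 0 \\ 0 & -1}$ and $\sm{1 & 1 \\ 0 & -1}$. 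These involutions generate the affine subgroup $\{\sm{1 & x \\ 0 & \pm 1}:x\in\ZZ\}$, under which $B$ ranges over the coset $\pm B+2A\ZZ$ while $A$ is fixed and $C$ is determined by the discriminant.

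For (ii), a standard reduction — pick the representative of $B$ modulo $2A$ in $[0,2A)$, and apply the sign flip whenever the result exceeds $A$ — yields a unique representative with $0\leq B\leq A$ (the endpoints $0$ and $A$ are self-paired). For (iii), Proposition \ref{prop:quadqfbijection} identifies primitivity of the Descartes quadruple with $\gcd(c_1,A,B,C)=1$, and I would show this forces $\gcd(A,B,C)=1$. Any odd prime $p\mid \gcd(A,B,C)$ satisfies $p^2\mid B^2-4AC=-4c_1^2$, hence $p\mid c_1$, a contradiction. If $2\mid\gcd(A,B,C)$, then $c_1$ must be odd (else $2\mid\gcd(c_1,A,B,C)$); writing $B=2B_0$ and dividing $B^2-4AC=-4c_1^2$ by $4$ gives $B_0^2-AC=-c_1^2$, which reduced modulo $4$ forces $B_0^2\equiv 3\pmod 4$, impossible.

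The main obstacle is the mod $4$ computation in (iii); the rest is essentially bookkeeping from the preceding paragraph. Combining the three pieces produces the claimed bijection between $c_1$-quadruple classes counted by $T(c_1,c_2)$ and integral triples $(A,B,C)$ satisfying $B^2-4AC=-4c_1^2$ with $\gcd(A,B,C)=1$ and $0\leq B\leq A$.
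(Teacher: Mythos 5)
Your proposal is correct and follows essentially the same route as the paper, which carries out exactly this computation (the $\langle S_4, P_{(34)}\rangle$-stabilizer transported through $\phi$ to the subgroup $\sm{1 & x \\ 0 & \pm 1}$ of $\PGL(2,\ZZ)$, with $0\leq B\leq A$ as the normal form) in the paragraph preceding the lemma. Your step (iii), checking that $\gcd(c_1,A,B,C)=1$ forces $\gcd(A,B,C)=1$ via the discriminant relation and the mod $4$ argument, is a correct verification of a point the paper asserts without proof.
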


By analyzing these conditions further, we obtain the following characterization.

\begin{lemma}\label{lem:Tc1c2further}
For a prime $p^e\mid\mid A$, let $s_p=s_p(e, c_1)$ denote the number of solutions $x\pmod{p^e}$ to
\[x^2\equiv -c_1^2\pmod{p^e}\quad\text{and}\quad p\nmid\gcd\left(x, \dfrac{x^2+c_1^2}{p^e}\right).\]
Then
\[0\leq T(c_1, c_2)-\dfrac{1}{2}\prod_{p^e\mid\mid A}s_p\leq 1.\]
\end{lemma}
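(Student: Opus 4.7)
The plan is to turn the counting problem of Lemma \ref{lem:Tc1c2basic} into a congruence count modulo $A$, apply CRT to factor the count as $\prod_{p^e \| A} s_p$, and then cut down by a factor of two (with a boundary correction) to account for the restriction $0 \leq B \leq A$.

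First, from $B^2 - 4AC = -4c_1^2$ we get $B \equiv 0 \pmod 2$, so we may write $B = 2x$. The conditions of Lemma \ref{lem:Tc1c2basic} become: $x \in \mathbb{Z}$ with $0 \leq x \leq A/2$, $x^2 \equiv -c_1^2 \pmod A$, $C = (x^2 + c_1^2)/A$, and $\gcd(A, 2x, C) = 1$. Let $N$ denote the number of $x \in \{0, 1, \ldots, A-1\}$ satisfying the congruence and the coprimality condition. My claim is that $N = \prod_{p^e \| A} s_p$.

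To verify this, observe that $\gcd(A, 2x, C) = 1$ decomposes into a local condition at each prime $p \mid A$. For odd $p$ with $p^e \| A$, the $p$-part is $p \nmid x$ or $p \nmid C$, and since $v_p(C) = v_p(x^2 + c_1^2) - e$, this is exactly $p \nmid \gcd(x, (x^2+c_1^2)/p^e)$, matching the definition of $s_p$. For $p = 2$ with $2^e \| A$ the actual condition is $2 \nmid C$, which looks superficially weaker than $s_2$'s condition; however, a case analysis shows they coincide. If $c_1$ is even then any solution $x$ is even, so $s_2$'s disjunct $2 \nmid x$ fails and $s_2$ reduces to the actual condition. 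If $c_1$ is odd, any solution $x$ is odd, which forces $v_2(x^2+c_1^2) = 1$, so $e = 1$ and $(x^2+c_1^2)/2$ is also odd, so both conditions hold. A further check that passing between $x_p \bmod p^e$ and $x \bmod A$ preserves the value of $(x^2+c_1^2)/p^e \bmod p$ whenever $p \mid x$ (so the definition is unambiguous) allows CRT to give $N = \prod_{p^e \| A} s_p$.

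Finally, the involution $\iota: x \mapsto A - x$ on $\mathbb{Z}/A\mathbb{Z}$ preserves both the congruence $x^2 \equiv -c_1^2 \pmod A$ and the coprimality with $(A, 2x, C)$. The set $\{0, 1, \ldots, \lfloor A/2 \rfloor\}$ contains exactly one element of each non-fixed $\iota$-orbit, together with all fixed points (namely $x = 0$ and, if $A$ is even, $x = A/2$). Writing $F \in \{0, 1, 2\}$ for the number of fixed-point solutions, we obtain
\[
T(c_1, c_2) = \frac{N - F}{2} + F = \frac{N + F}{2},
\]
so $T(c_1, c_2) - \tfrac{1}{2}\prod s_p = F/2 \in \{0, \tfrac{1}{2}, 1\} \subseteq [0, 1]$, as desired.

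The main obstacle I anticipate is the $p = 2$ local analysis: the condition used in the definition of $s_p$ is phrased with a disjunction that is genuinely different from the direct divisibility condition arising from $\gcd(A, 2x, C) = 1$, and ensuring they agree requires carefully tracking the parities of $c_1$, $x$, and the $2$-adic valuation of $x^2 + c_1^2$. Everything else is routine.
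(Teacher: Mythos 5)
Your proposal is correct and follows essentially the same route as the paper's proof: write $B=2x$, count residue classes $x\pmod{A}$ satisfying the congruence and coprimality conditions, factor the count via CRT into the local quantities $s_p$, and halve with a correction for the fixed points $x\equiv 0$ and $x\equiv A/2$ of $x\mapsto -x$ to account for $0\leq B\leq A$. The only cosmetic differences are that the paper first replaces $\gcd(A,2x,C)$ by $\gcd(A,x,C)$ via a parity argument where you compare the $p=2$ local conditions directly, and the paper spells out the well-definedness of the condition on residues that you only flag as a "further check"; both of these are routine and your version is sound.
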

\begin{proof}
Adopting the notation of Lemma \ref{lem:Tc1c2basic}, $B$ is even, so write $B=2x$. The equation rearranges to
\[C=\dfrac{x^2+c_1^2}{A},\]
so we have a solution (ignoring the other two conditions) if and only if $x^2\equiv -c_1^2\pmod{A}$. Next, we claim that the condition $\gcd(A, 2x, C)=1$ can be replaced by $\gcd(A, x, C)=1$. If not, then there is a situation where $A$ and $C$ are even, but $x$ is odd. Since $2\mid A\mid x^2+c_1^2$, $c_1$ must also be odd. However $x^2+c_1^2\equiv 2\pmod{4}$, so $C=\frac{x^2+c_1^2}{A}$ must be odd (or not integral), contradiction.

Next, we claim that $\gcd(A, x, C)=1$ can be deduced from $x\pmod{A}$ only. To this end, assume there is a prime $p$ with $p\mid\gcd(A, x, C)$. Write $x=p^fu$, where $p^f=\gcd(x, p^e)$, and $f\geq 1$. We know $u\pmod{p^{e-f}}$, whence we know $x^2\pmod{p^{e+f}}$. In particular, we know $x^2+c_1^2$ modulo $p^{e+1}$, and thus $C\pmod{p}$. Therefore this condition does not depend on the representative of the equivalence class $x\pmod{A}$.

The final condition is $0\leq 2x\leq A$. If $x=x'$ is a solution to $x^2\equiv -c_1^2\pmod{A}$, then there will be exactly one solution to $0\leq 2x\leq A$ from the equivalence classes $x\equiv\pm x'\pmod{A}$. This is two distinct classes unless $x\equiv 0\pmod{A}$ or $x\equiv A/2\pmod{A}$ are solutions. In particular, dividing the number of solutions $x\pmod{A}$ to $x^2\equiv -c_1^2\pmod{A}$ and $\gcd\left(A, x, \frac{x^2+c_1^2}{A}\right)=1$ by $2$ yields $T(c_1, c_2)$, where we undercount by $0$, $\frac{1}{2}$, or $1$.

Finally, by the Chinese remainder theorem, it suffices to solve this for all prime powers dividing $A$, and multiply the number of solutions together.
\end{proof}

To understand $T(c_1, c_2)$, it suffices to understand $s_p$ for all $p^e\mid\mid A$. The generic case is when $p\nmid c_1$, where it is clear that
\[s_p=\begin{cases}
2 & \text{if $p\equiv 1\pmod{4}$;}\\
1 & \text{if $p^e=2$;}\\
0 & \text{otherwise.}
\end{cases}\]

Next, if $p\mid c_1$ and $e=1$, then $x\equiv 0\pmod{p}$. However, $p\mid\frac{x^2+c_1^2}{p}$, so the $\gcd$ condition fails and $s_p=0$. On the other hand, if $p\mid c_1$ and $e=2$, then $x\equiv px'\pmod{p^2}$ for some $0\leq x'\leq p-1$, and the $\gcd$ condition becomes $x'^2+(c_1/p)^2\not\equiv 0\pmod{p}$. This always has $p-2$, $p-1$, or $p$ solutions:
\[s_p=\begin{cases}
p & \text{if $e=2$, $p\mid\mid c_1$, $p\equiv 3\pmod{4}$;}\\
p-1 & \text{if $e=2$ and ($p^2\mid c_1$ or $p=2$);}\\
p-2 & \text{if $e=2$, $p\mid\mid c_1$, $p\equiv 1\pmod{4}$;}\\
0 & \text{if $e=1$.}
\end{cases}\]
This change in behaviour is enough to introduce variation in the histogram of $\RMC_0(n)$, where larger primes $p$ induce larger variations.

The final case is $p\mid c_1$ and $e\geq 3$. It follows that $x=px'$, with $x'$ defined modulo $p^{e-1}$, and
\[x'^2\equiv -(c_1/p)^2\pmod{p^{e-2}}\quad\text{and}\quad p\nmid\dfrac{x'^2+(c_1/p)^2}{p^{e-2}},\]
whence $x'$ is counted in $s_p(e-2,c_1/p)$. This counts solutions modulo $p^{e-2}$, so going up to $p^{e-1}$ multiplies the count by $p$. When counting $s_p(e-2,c_1/p)$, we have the slightly less restrictive condition of
\[p\nmid\gcd\left(x', \dfrac{x'^2+(c_1/p)^2}{p^{e-2}}\right).\]
If $p\mid c_1/p$, then $p\mid x'$ necessarily, whence all lifted solutions are valid. If $p\nmid c_1/p$, then we must consider the $p$ solutions modulo $p^{e-1}$, i.e. $x'+kp^{e-2}$ for $k=0,1,\ldots, p-1$. If $p\neq 2$, then exactly one of these fails to lift. If $p=2$, then $s_p(e-2, c_1/p)=0$ unless $e=3$, and it can be seen that both solutions lift. In particular, if $e\geq 3$ and $p\mid c_1$, then
\[s_p(e, c_1)=\begin{cases}
ps_p(e-2, c_1/p) & \text{if $p=2$ or $p^2\mid c_1$;}\\
(p-1)s_p(e-2, c_1/p) & \text{if $p$ is odd and $p\mid\mid c_1$.}
\end{cases}\]

By inducting and considering the various cases, it can be shown that

\begin{lemma}\label{lem:spvalues}
Assume that $p^f\mid\mid c_1$. If $p$ is odd, then
\[s_p(e, c_1)=\begin{cases}
2(p-\mathds{1}_{f>0})p^{f-1} & \text{if $e\geq 2f+1$ and $p\equiv 1\pmod{4}$;}\\
0 & \text{if $e\geq 2f+1$ and $p\equiv 3\pmod{4}$;}\\
(p-2)p^{f-1} & \text{if $e=2f$ and $p\equiv 1\pmod{4}$;}\\
p^f & \text{if $e=2f$ and $p\equiv 3\pmod{4}$;}\\
(p-1)p^{e/2-1} & \text{if $e<2f$ and $e$ is even;}\\
0 & \text{if $e<2f$ and $e$ is odd.}
\end{cases}\]
If $p=2$, then
\[s_2(e, c_1)=\begin{cases}
0 & \text{if $e\geq 2f+2$;}\\
2^f & \text{if $e=2f+1$;}\\
2^{e/2-1} & \text{if $e\leq2f$ and $e$ is even;}\\
0 & \text{if $e\leq 2f$ and $e$ is odd.}
\end{cases}\]
\end{lemma}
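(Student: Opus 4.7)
The plan is to verify the lemma by strong induction on $e$, using the base cases $e = 1$ and $e = 2$ already established in the paragraphs preceding the lemma, together with the recursion
\[s_p(e, c_1) = \begin{cases} p\, s_p(e-2, c_1/p) & \text{if } p = 2 \text{ or } p^2 \mid c_1, \\ (p-1)\, s_p(e-2, c_1/p) & \text{if } p \text{ is odd and } p \mid\mid c_1, \end{cases}\]
derived in the text for $e \geq 3$ with $p \mid c_1$. For the case $f = 0$ (i.e.\ $p \nmid c_1$) the generic computation already gives $s_p = 2$ for $p \equiv 1 \pmod 4$, $s_p = 0$ for $p$ odd with $p \equiv 3 \pmod 4$, and $s_p = \mathds{1}_{e=1}$ for $p = 2$, which one checks directly matches the claimed formula at $f = 0$ (the $p^{f-1}$ factor is cancelled by $\mathds{1}_{f>0} = 0$).

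For the inductive step I would fix a clause of the lemma and match it against the reduction $(e, f) \mapsto (e-2, f-1)$. Each clause is preserved under this reduction, with the only exception being when the reduction sends $f$ to $0$, in which case we land in the generic base case rather than the inductive hypothesis. For example, in the clause \emph{$p$ odd, $e \geq 2f+1$, $p \equiv 1 \pmod 4$} with $f \geq 2$: the inductive hypothesis at $(e-2, c_1/p)$ lies in the same clause since $e - 2 \geq 2(f-1)+1$, giving $s_p(e-2, c_1/p) = 2(p-1)p^{f-2}$; multiplication by $p$ (since $p^2 \mid c_1$) yields $2(p-1)p^{f-1}$, as claimed. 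If instead $f = 1$, the reduction sends $f' = 0$ and $e' = e-2 \geq 1$, so $s_p(e-2, c_1/p) = 2$ by the base case, and multiplication by $p - 1$ again matches the formula. A parallel argument handles the case $e = 2f$ (which reduces to $e' = 2f'$, hitting either the $e = 2$ base case when $f = 1$ or the same clause when $f \geq 2$), and the clauses with $e < 2f$ (which reduce within themselves). For $p = 2$ the multiplicative factor is always $2$, since $f = 1$ combined with $e \geq 3$ forces $e = 3 = 2f+1$, handled directly by the recursion from $s_2(1, c_1/2) = 1$.

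The main obstacle is pure bookkeeping: there are six clauses for odd $p$ and four for $p = 2$, each of which must be checked to transition to the correct clause of the inductive hypothesis, with particular care at the boundaries $e \in \{2f, 2f+1, 2f+2\}$. A subtle point is that the odd-$e$ clause with $e \leq 2f$ must reduce down through successive $(e-2, f-1)$ steps (maintaining parity of $e$ and inequality $e \leq 2f$) all the way to $e = 1$ with $f \geq 1$, where the text already observes $s_p(1, c_1) = 0$ because $p \mid c_1$ forces $x \equiv 0 \pmod p$ yet the $\gcd$ condition then fails.

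To complete the writeup I would organize the verification as a table indexed by the clauses of the lemma: for each clause I would record the applicable multiplicative factor in the recursion and the clause of the hypothesis that applies at $(e-2, f-1)$, then confirm the one-line arithmetic identity. With the recursion and base cases already in hand, no conceptual ingredients beyond this case matching are required.
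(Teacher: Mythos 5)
Your proposal is correct and matches the paper's (very terse) argument: the paper likewise derives the $e=1$, $e=2$ base cases and the two-step recursion $s_p(e,c_1)\mapsto s_p(e-2,c_1/p)$ in the preceding paragraphs and then simply asserts the lemma ``by inducting and considering the various cases,'' which is exactly the clause-by-clause induction you describe. The only nitpick is the phrasing ``$f=1$ combined with $e\geq 3$ forces $e=3$'' for $p=2$, which is not literally true; what saves that case is that $s_2(e-2,c_1/2)=0$ whenever $e>3$, as you in effect use.
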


The main takeaway is that for $s_p(e, c_1)$ to be larger than normal, we must have $e\geq 2$ and $f\geq 1$. Furthermore, the size of $s_p(e, c_1)$ is approximately $p^{\min(e/2, f)}$ when it is non-zero. Specializing back to $\RMC(n)$, we obtain the following theorem.

\begin{theorem}\label{thm:finalspikes}
Let $n$ be a positive integer. Spikes occur in the histogram of $\RMC_0(n)$ (and $\RMC(n)$) for each prime $p\leq\sqrt{n}$ that divides $n$. These spikes occur near $\frac{c}{n}$ where $0\leq c\leq n-1$ is an integer with $p^2\mid n-c$, and larger values of $p$ give larger spikes.
\end{theorem}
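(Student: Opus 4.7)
The plan is to analyze the multiplicities in the histogram of $\RMC_0(n)$ directly via Lemma \ref{lem:Tc1c2further}, and then to read off where and how large the deviations from the limiting ``smooth'' distribution can be. The multiplicity of $c/n$ in $\RMC_0(n)$ is exactly $T(n,-c)$, so spikes correspond to values of $c$ for which $T(n,-c)$ is anomalously large compared with $T(n,-c')$ averaged over $c'$ in a small neighborhood. Applying Lemma \ref{lem:Tc1c2further} with $c_1=n$ and $A=n-c$ yields
\[T(n,-c) = \tfrac{1}{2}\prod_{p^e\mid\mid n-c}s_p(e,n)+O(1),\]
so $T(n,-c)$ is, up to bounded error, a product of local factors controlled prime-by-prime by Lemma \ref{lem:spvalues}.

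Next I would partition the primes into two kinds. For $p\nmid n$, the generic analysis already shows $s_p(e,n)\leq 2$, so such primes only contribute mild fluctuations that average out once we smooth over a bin. The spike-producing primes are those with $p\mid n$: if $p^f\mid\mid n$ and $p^e\mid\mid n-c$ with $e\geq 2$, then Lemma \ref{lem:spvalues} gives $s_p(e,n)$ of size approximately $p^{\min(e/2,f)}$, whereas the case $e=1$ forces $s_p=0$ for odd $p$. Therefore, relative to the typical value of $T(n,-c')$ in a neighborhood of $c$, the quantity $T(n,-c)$ is enhanced by a factor of roughly $p^{\min(e/2,f)}$ whenever some $p\mid n$ satisfies $p^e\mid\mid n-c$ with $e\geq 2$, producing a spike at $c/n$.

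For the counting, the set of $c\in\{0,1,\ldots,n-1\}$ with $p^2\mid n-c$ has size $\lfloor n/p^2\rfloor$, which is positive precisely when $p\leq \sqrt{n}$, recovering the stated range of spike-producing primes. The enhancement factor grows with $p$, so larger primes dividing $n$ give taller (though fewer) spikes, while small primes give many short ones, matching the informal description. The corresponding assertion for $\RMC(n)$ is then inherited from the inclusion $\RMC_0(n) \subseteq \RMC(n)$, together with the observation that the other stairs contribute smoothly in the limit by Theorem \ref{thm:integraldepthprob} and therefore cannot erase the spikes of $\RMC_0(n)$.

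The main obstacle is that the notion of ``spike'' is inherently qualitative: the visible height of a spike in the histogram depends on the bin width, so Theorem \ref{thm:finalspikes} is not amenable to a fully analytic statement without additional quantitative conventions. The heavy lifting has already been performed in Lemmas \ref{lem:Tc1c2further} and \ref{lem:spvalues}; what remains is the bookkeeping that organizes their output into the dichotomy between generic $c$ and spike-producing $c$. A stronger, quantitative formulation would replace this with a variance estimate for $T(n,-c)$ restricted to residue classes modulo $p^2$, and that is where I would direct any additional effort.
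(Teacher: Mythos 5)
Your proposal is correct and follows essentially the same route as the paper: identify the multiplicity of $c/n$ in $\RMC_0(n)$ with $T(n,-c)$, apply Lemma \ref{lem:Tc1c2further} to reduce to the local factors $s_p$, and then use Lemma \ref{lem:spvalues} to see that anomalously large values require $p\mid n$ and $p^2\mid n-c$ (forcing $p\leq\sqrt{n}$) with the enhancement growing as a power of $p$. Your added bookkeeping (the count $\lfloor n/p^2\rfloor$ of spike locations and the remark that the other stairs contribute smoothly to $\RMC(n)$) is consistent with, and slightly more explicit than, the paper's argument.
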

\begin{proof}
A spike occurs near $\frac{c}{n}$ when $T(n, -c)$ is larger than normal. Lemma \ref{lem:Tc1c2further} implies that this happens when a value of $s_p$ is large, for some $p\mid n-c$. Lemma \ref{lem:spvalues} implies that $p\mid n$ and $p^2\mid n-c$, hence $n\geq c+p^2\geq p^2$. The lemma also implies that larger $p$ gives larger spikes, since the value of $s_p$ grows as a power of $p$.
\end{proof}

If $n$ is prime, then it has no prime divisors at most $\sqrt{n}$, so by Theorem \ref{thm:finalspikes}, there are no spikes! An example of this was already seen in Figure \ref{fig:RMC1}, where the histogram was very smooth.

The effect of small prime powers is low for two reasons: they create the least variation, and the bin size required to make a good histogram ends up grouping enough terms together. In turn, this creates more of a fuzzy effect, as opposed to isolated spikes. See Figure \ref{fig:RMC3} for the example of $n=2\cdot 3\cdot 5\cdot 1110023$.

\begin{figure}[htb]
	\includegraphics{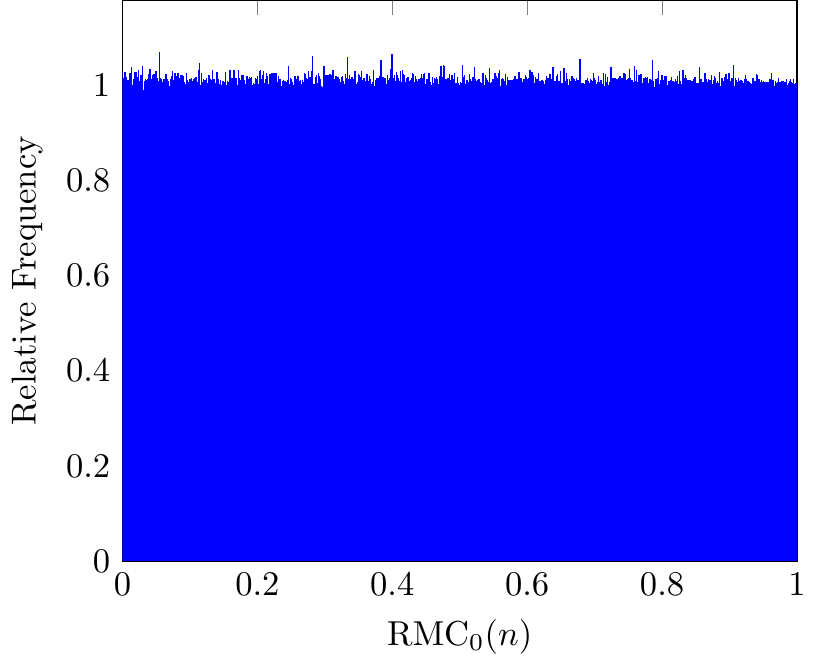}
	\caption{Histogram for $n=33300690$; $8479975$ data points in $2000$ bins.}\label{fig:RMC3}
\end{figure}

\begin{figure}[htb]
	\includegraphics{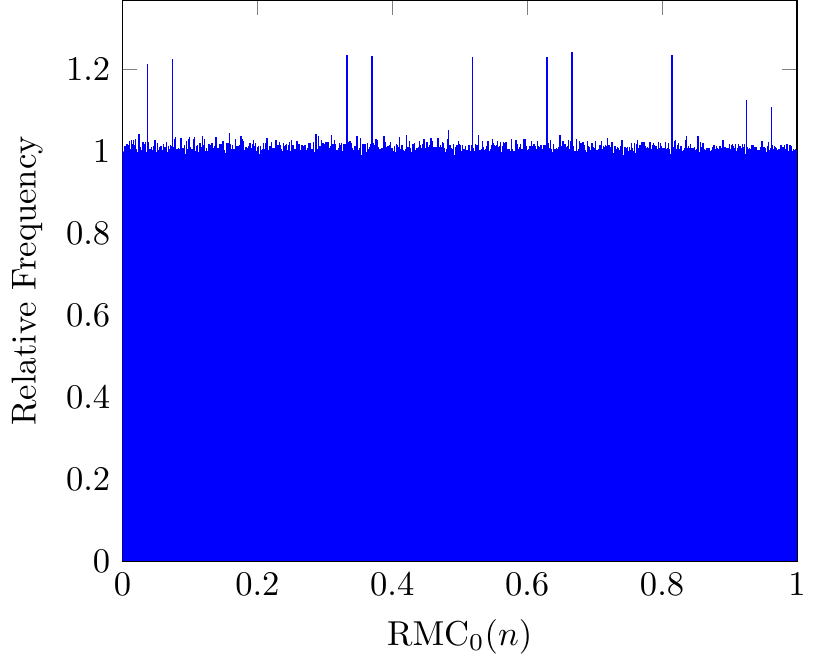}
	\caption{Histogram for $n=26516187$; $8449251$ data points in $2000$ bins.}\label{fig:RMC4}
\end{figure}

Finally, consider $n=26516187=3^3\cdot 991^2$, as depicted in Figure \ref{fig:RMC4}. Large spikes occur near $1-\frac{c_i}{27}$, corresponding to
\[991^2\mid n-(27-c_i)991^2=c_i991^2.\]
However, we note that these spikes do not occur for each value of $c_i$, namely they occur when
\[c_i\in\{1, 2, 5, 9, 10, 13, 17, 18, 25, 26\}.\]
Furthermore, when $c_i\in\{1,2\}$, the spikes are about half the size! This is explained fully by Lemmas \ref{lem:Tc1c2further} and \ref{lem:spvalues}: the $991^2$ causes $s_{991}$ to be abnormally large, but prime powers that divide $c_i$ also contribute to the extra height. When $c_i\in\{1,2\}$, this is an extra factor of $1$, whereas the other $c_i$ give an extra factor of $2$, explaining the height difference. The values of $c_i$ not listed all had a prime factor with $s_p=0$, which completely nullified the corresponding spike.

\bibliographystyle{alpha}
\bibliography{../references}
\end{document}